\newtheorem{theorem}{Theorem}[section]
\newtheorem{lemma}[theorem]{Lemma}
\newtheorem{corollary}[theorem]{Corollary}
\newtheorem{proposition}[theorem]{Proposition}
\theoremstyle{definition}
\newtheorem{example}[theorem]{Example}
\theoremstyle{remark}
\newtheorem{remark}[theorem]{Remark}
\numberwithin{equation}{section}
\begin{document}

\title{Moduli spaces of contact instantons}
\author{David Baraglia}
\author{Pedram Hekmati}

% Address of record for the research reported here
\address{School of Mathematical Sciences, The University of Adelaide, Adelaide SA 5005, Australia}

\email{david.baraglia@adelaide.edu.au}
\email{pedram.hekmati@adelaide.edu.au}

\begin{abstract}
We construct the moduli space of contact instantons, an analogue of Yang-Mills instantons defined for contact metric $5$-manifolds and initiate the study of their structure. In the $K$-contact case we give sufficient conditions for smoothness of the moduli space away from reducible connections and show the dimension is given by the index of an operator elliptic transverse to the Reeb foliation. The moduli spaces are shown to be K\"ahler when the $5$-manifold $M$ is Sasakian and hyperK\"ahler when $M$ is transverse Calabi-Yau. We show how the transverse index can be computed in various cases, in particular we compute the index for the toric Sasaki-Einstein spaces $Y^{p,q}$.
\end{abstract}
\thanks{This work is supported by the Australian Research Council Discovery Projects DP110103745, DP130102578 and DE12010265.}

\subjclass[2010]{Primary 53D10, 14D21; Secondary 19K56, 53C25}

% You can replace \today by manually entering the date (also you can manually enter to put date in format Day Month Year)
\date{\today}

%\dedicatory{This paper is dedicated to our advisors.}
% \keywords{Differential geometry, algebraic geometry}

%%%%%%%%%%%%%%%%%%%%%%%%%%%%%%%%%%%%%%%%%%%%%%%%%%%%%%%%%%%%%%%%%%%%%%%%%%%%%%%%
%%%%%%%%%%%%%%%%%%%%%%%%%%%%%%%%%%%%%%%%%%%%%%%%%%%%%%%%%%%%%%%%%%%%%%%%%%%%%%%%
%\begin{abstract}
%\end{abstract}
%%%%%%%%%%%%%%%%%%%%%%%%%%%%%%%%%%%%%%%%%%%%%%%%%%%%%%%%%%%%%%%%%%%%%%%%%%%%%%%%
%%%%%%%%%%%%%%%%%%%%%%%%%%%%%%%%%%%%%%%%%%%%%%%%%%%%%%%%%%%%%%%%%%%%%%%%%%%%%%%%

\maketitle

%%%%%%%%%%%%%%%%%%%%%%%%%%%%%%%%%%%%%%%%%%%%%%%%%%%%%%%%%%%%%%%%%%%%%%%%%%%%%%%%
%%%%%%%%%%%%%%%%%%%%%%%%%%%%%%%%%%%%%%%%%%%%%%%%%%%%%%%%%%%%%%%%%%%%%%%%%%%%%%%%
%%%%%%%%%%%%%%%%%%%%%%%%%%%%%%%%%%%%%%%%%%%%%%%%%%%%%%%%%%%%%%%%%%%%%%%%%%%%%%%%
%%%%%%%%%%%%%%%%%%%%%%%%%%%%%%%%%%%%%%%%%%%%%%%%%%%%%%%%%%%%%%%%%%%%%%%%%%%%%%%%

\section{Introduction}

The study of moduli spaces of the anti-self-dual instanton equation $*F_A = -F_A$ has generated stunning advances in our understanding of smooth $4$-manifolds. While in dimensions greater than $4$ the classification of smooth structures is far better understood, it is expected that higher dimensional instantons will prove a useful tool in the study of certain geometric structures on these manifolds. Higher dimensional instantons may be defined in $d \ge 4$ dimensions by choosing a $(d-4)$-form $\Omega$. We say that a connection $A$ is an {\em anti-self-dual $\Omega$-instanton} if the curvature $2$-form $F_A$ satisfies
\begin{equation}\label{equinstanton}
*F_A = -  \Omega \wedge F_A.
\end{equation}
Such equations were considered by physicists in \cite{cdfn} and further popularised by Donaldson and Thomas \cite{dt} and Tian \cite{tian}. Particular cases of (\ref{equinstanton}) include Hermitian-Einstein connections on K\"ahler manifolds, $G_2$- and $Spin(7)$-instantons \cite{dt}. In the special case where $\Omega$ is a closed form, as considered in \cite{tian}, we find on differentiating and using the Bianchi identity that a solution to (\ref{equinstanton}) is automatically a solution of the Yang-Mills equation $d_A ( *F_A) = 0$. In general $A$ is only a solution to the {\em Yang-Mills equation with torsion} \cite{hilp}
\begin{equation*}
d_A (*F_A) + d\Omega \wedge F_A = 0.
\end{equation*}
Somewhat surprisingly, there are special cases of the $\Omega$-instanton equation in which $\Omega$ is not closed, but for which every solution of (\ref{equinstanton}) is nevertheless a Yang-Mills connection because the term $d\Omega \wedge F_A$ automatically vanishes. This was observed for nearly K\"ahler manifolds in \cite{xu} and for geometries related to Killing spinors in \cite{hano}. A third such instance, the subject of this paper, is that of {\em contact instantons}, introduced by K\"all\'en and Zabzine in \cite{kaza} arising from the study of a $5$-dimensional super Yang-Mills theory. For this let $M$ be a contact metric $5$-manifold with contact form $\eta$. An {\em anti-self-dual contact instanton} is a solution to (\ref{equinstanton}) with $\Omega = \eta$, that is
\begin{equation*}
*F_A = -\eta \wedge F_A,
\end{equation*}
while a {\em self-dual contact instanton} is a solution with $\Omega = -\eta$,
\begin{equation*}
*F_A = \eta \wedge F_A.
\end{equation*}
Then even though $d\eta \neq 0$, the anti-self-dual contact instantons are Yang-Mills connections as one can show that $d\eta \wedge F_A = 0$.\\

In this paper we construct the moduli spaces of self-dual and anti-self-dual contact instantons on compact $K$-contact manifolds. Note that in order to construct a reasonable moduli space we find it necessary to assume the $K$-contact condition, that is, the Reeb vector field $\xi$ is a Killing vector for the metric on $M$. The Reeb vector field defines a $1$-dimensional foliation $\mathcal{F}_\xi$ on $M$ and we find that the transverse geometry of this foliation plays a significant role in understanding the contact instanton moduli spaces. For instance, we find that the dimension of the moduli space is given by the index of a complex which is transverse elliptic to the Reeb foliation.\\

The flow along $\xi$ defines a $1$-parameter group of isometries on $M$. As we recall in Section \ref{secci}, the closure of this $1$-parameter group in the isometry group of $M$ defines a torus $T^r$ of rank $r \ge 1$ acting on $M$ by isometries. When $r=1$, $M$ is {\em quasi-regular} and is a Seifert fibration over a symplectic $4$-orbifold $X$. In this case we prove (up to some minor details) that contact instantons on $M$ correspond to ordinary instantons on the orbifold $X$. On the other hand when $r > 1$, $M$ is {\em irregular} and the contact instanton equation can not be reduced to lower dimensions. We show that even when $M$ is irregular, it is possible to get a smooth moduli space of irreducible contact instantons under reasonable assumptions. We also prove that these moduli spaces are generally non-empty by looking at the case of the irregular $Y^{p,q}$ spaces, in Section \ref{secypq}.\\

Given a contact instanton $A$ on a principal $G$-bundle $P$ we prove in the irreducible case that the action of the torus $T^r$ lifts to the principal bundle and preserves $A$. From this we are able to re-express the dimension of the moduli space of contact instantons as the index of a complex transverse to a group action, as defined by Atiyah in \cite{at}. In general, computing the index of a complex elliptic transverse to a foliation is extremely difficult. That we are able to convert the index to one transverse to a group action is a notable simplification. While it is still a very difficult problem to obtain such an index, we isolate some cases in which it becomes possible to carry out the index computation to the end and determine the dimension of these moduli spaces.\\

When the contact manifold $M$ is Sasakian we can say more about the moduli spaces. The anti-self-duality equation can be interpreted as the Hermitian-Einstein equation for bundles with transverse holomorphic structure. We show in Section \ref{secsasaki} that the moduli space of irreducible anti-self-dual contact instantons on $M$ carries a natural K\"ahler structure. This extends a theorem of Biswas and Schumacher in which the moduli space was constructed in the quasi-regular case and shown to be K\"ahler \cite{bisc}. Furthermore, if $M$ has a transverse Calabi-Yau structure we prove that the moduli space in fact has a natural hyperK\"ahler structure.\\

The Reeb foliation $\mathcal{F}_\xi$ on $M$ has codimension $4$, so it is possible to speak of self-duality/anti-self-duality in the transverse directions. As explained in the paper, the contact instanton equation can be thought of as a transverse connection with self-dual/anti-self-dual curvature. One may generalise this to higher dimensional manifolds equipped with a codimension $4$ foliation and consider transverse anti-self-dual connections. Such instantons have recently been considered by Wang in \cite{wang}. It is therefore worth pointing out how our work differs from \cite{wang} and the features that are unique to the contact case. An important difference is that we consider moduli spaces of arbitrary solutions of (\ref{equinstanton}) and all possible deformations within the space of connections, while in \cite{wang} one fixes a transverse structure and considers only basic connections and basic deformations. It is a non-trivial result of our paper that under certain circumstances all sufficiently small deformations of the contact instanton equation are basic. Furthermore we prove a number of results specific to the contact case, such as the Gysin sequence of Proposition \ref{propcohom}, the K\"ahler structure in the Sasaki case and the lifting of the torus action $T^r$ in Proposition \ref{proptorus}.\\

We briefly outline the contents of the paper. In Section \ref{secci} we introduce the contact instanton equation along with a review of $K$-contact manifolds and transverse geometry. Section \ref{secmoduli} is concerned with the construction of the moduli space of contact instantons starting with the infinitesimal theory in \textsection \ref{secdeform} and the full deformation theory in \textsection \ref{secmodulispace}. In \textsection \ref{secvanish} we give conditions under which the moduli space of irreducible contact instantons is smooth. Section \ref{secsasaki} deals with the case where the contact manifold is Sasakian. In \textsection \ref{secgvanish} we give geometric conditions for smoothness of the moduli space in the Sasaki case and in \textsection \ref{secgmoduli} we prove that the smooth points of the moduli space have a K\"ahler structure. We finish with Section \ref{sectrind} in which we address the problem of calculating the transverse index and carry out the computation in special cases. In \textsection \ref{secqreg} we deal with the quasi-regular case and \textsection \ref{secypq} gives the computation for a family of irregular Sasaki-Einstein manifolds, the $Y^{p,q}$ spaces.

%%%%%%%%%%%%%%%%%%%%%%%%%%%%%%%%%%%%%%%%%%%%%%%%%%%%%%%%%%%%%%%%%%%%%%%%%%%%%%%%
%%%%%%%%%%%%%%%%%%%%%%%%%%%%%%%%%%%%%%%%%%%%%%%%%%%%%%%%%%%%%%%%%%%%%%%%%%%%%%%%

\section{Contact instantons}\label{secci}

Let $M$ be a manifold of dimension $2n+1$. Recall that an {\em almost contact metric structure} $(\xi,\eta,\Phi,g)$ on $M$ consists of a vector field $\xi$, $1$-form $\eta$, endomorphism $\Phi \colon TM \to TM$ and a Riemannian metric $g$ such that $\eta(\xi) =1$, $\Phi^2 = -I + \eta \otimes \xi$ and $g( \Phi X , \Phi Y) = g(X,Y) - \eta(X) \eta(Y)$ for all vector fields $X,Y$. Alternatively this is a reduction of structure of the tangent bundle to $U(n) \subset GL(2n+1,\mathbb{R})$. We let $V$ be the rank $1$ subbundle spanned by $\xi$ and $H = {\rm Ker}(\eta)$ the annihilator of $\eta$. Then we have an orthogonal decomposition $TM = V \oplus H$ together with a unitary structure on $H$. The restriction $J = \Phi|_H$ of $\Phi$ to $H$ defines the complex structure on $H$ and letting $\omega(X,Y) = g(X,\Phi Y)$, we find that $\omega$ is a $2$-form which restricted to $H$ is the Hermitian $2$-form associated to $J$. We say that $(\xi,\eta,\Phi,g)$ is a {\em contact metric structure} if in addition $d\eta = \omega$. This implies that $\eta$ is a contact form and $\xi$ the associated Reeb vector field. In this case we will also say that $M$ is a {\em contact metric manifold}.\\

In this paper we take $M$ to be a compact, connected $5$-manifold with contact metric structure $(\xi,\eta,\Phi,g)$. A differential form $\alpha \in \Omega^k(M)$ will be called {\em transverse} if $i_\xi \alpha = 0$. We let $\Omega^k_H(M) = \Gamma(M, \wedge^k H^*)$ denote the space of transverse $k$-forms. A transverse form $\alpha \in \Omega^k_H(M)$ is further said to be {\em basic} if $i_\xi d \alpha = 0$. If this is the case then $d\alpha$ is also basic. We let $\Omega^k_B(M)$ denote the space of basic $k$-forms and $d_B \colon \Omega^k_B(M) \to \Omega^{k+1}_B(M)$ the restriction of $d$ to basic forms. The cohomology of the complex $( \Omega^*_B(M) , d_B)$ will be called the {\em basic cohomology} of $M$ and denoted $H^*_B(M)$. Since $H$ is $4$-dimensional we have a decomposition $\Omega^2_H(M) = \Omega^+_H(M) \oplus \Omega^-_H(M)$ into self-dual and anti-self-dual transverse $2$-forms. A $2$-form $\alpha \in \Omega^2(M)$ is self-dual/anti-self-dual if and only if it satisfies $*\alpha = \pm \eta \wedge \alpha$, or equivalently $\alpha = \pm i_\xi (*\alpha)$. Following \cite{kaza} we may introduce the notion of self-dual/anti-self-dual connections on $M$.\\

Let $G$ be a compact Lie group with Lie algebra $\mathfrak{g}$ and let $P \to M$ be a principal $G$-bundle with connection $\nabla$. We say that $\nabla$ is a {\em self-dual contact instanton} (or SD contact instanton) if the curvature $F$ of $\nabla$ is self-dual
\begin{equation*}
*F = \eta \wedge F.
\end{equation*}
Similarly we say that $\nabla$ is an {\em anti-self-dual contact instanton} (or ASD contact instanton) if $F$ is anti-self-dual
\begin{equation*}
*F = -\eta \wedge F.
\end{equation*}

\begin{remark}
In the case of instantons in $4$ dimensions there is no essential difference between the self-dual and anti-self-dual cases, since these are interchanged by reversing the orientation of the $4$-manifold. In the case of contact instantons, a choice of orientation is distinguished by the contact structure. This leads to some important distinctions between the self-dual and anti-self-dual cases. As discussed in the introduction, we see that any ASD contact instanton satisfies the Yang-Mills equations $d_\nabla(*F) = 0$, while this is generally not the case for SD contact instantons.
\end{remark}

Recall that a contact metric structure $(\xi,\eta,\Phi,g)$ is said to be {\em $K$-contact} if $\xi$ is a Killing vector of $g$. In this case we will also say that $M$ is a {\em $K$-contact manifold}. If $M$ is $K$-contact, the Killing vector $\xi$ generates a $1$-parameter subgroup $\{ exp(t\xi) \}$ of $Isom(M,g)$, the isometry group of $(M,g)$. By the Myers-Steenrod theorem $Isom(M,g)$ is a compact Lie group acting smoothly on $M$. Let $T \subseteq Isom(M,g)$ be the closure of $\{ exp(t\xi) \}$. Since $T$ is a closed, connected, abelian subgroup of $Isom(M,g)$, it must be a torus of rank $r \ge 1$. That $M$ is a $K$-contact manifold imposes non-trivial restrictions on the rank of $T$. In fact, one can show that we have $1 \le r \le 3$ \cite{ruk}. We say that $M$ is {\em quasi-regular} when $r=1$ and {\em irregular} when $r > 1$.

\begin{example}
Let $(X,\omega)$ be a symplectic $4$-manifold such that the cohomology class of $\omega$ is integral and choose a lift of $[\omega] \in H^2(X,\mathbb{R})$ to $c \in H^2(X,\mathbb{Z})$. Let $\pi\colon M \to X$ be the principal circle bundle over $X$ with Chern class $c$. Then $M$ admits a $K$-contact structure $(\xi , \eta , \Phi , g)$ such that $\xi$ is the generator of the circle action and $\eta$ is a connection for the circle bundle $M \to X$. Such contact manifolds are called {\em regular} and the projection $\pi \colon  M \to X$ is known as the Boothby-Wang fibration.
If $\nabla$ is an instanton on $X$ then the pullback $\pi^* \nabla$ is a contact instanton on $M$. Not every contact instanton on $X$ arises in this manner. For instance $\nabla = d + i\eta$ on the trivial circle bundle over $X$ is a self-dual contact instanton which is not a pullback. Moreover $d + i\eta$ is not a Yang-Mills connection. For another example, let $X = \mathbb{CP}^2$ and for an integer $k > 1$ take as a symplectic form $k$ times the standard K\"ahler form on $\mathbb{CP}^2$. Then $M$ is the lens space $S^5/\mathbb{Z}_k$ and any non-trivial flat connection on $M$ is a contact instanton which is not a pullback from $X$.
\end{example}

\begin{example}\label{exorbi}
We can extend the construction of the previous example as follows. Suppose that $(X,\omega)$ is a symplectic $4$-orbifold such that the local uniformizing groups $\Gamma_x$ are cyclic for all $x \in X$ and suppose that $[\omega] \in H^2(X,\mathbb{R})$ admits a lift to a class $c \in H^2_{{\rm orb}}(X,\mathbb{Z})$, the degree $2$ orbifold cohomology of $X$. Then $c$ defines an orbifold principal circle bundle $\pi \colon  M \to X$. Recall that the structure of such a bundle involves homomorphisms $\varphi_x \colon  \Gamma_x \to U(1)$ of the local uniformizing groups and that $M$ is a manifold if and only if $\varphi_x$ is injective for each $x \in X$. In this case $M \to X$ is a Seifert fibration and $M$ admits the structure of a quasi-regular $K$-contact manifold. Now suppose that $P \to X$ is an orbifold principal $G$-bundle and that $\nabla$ is a connection on $P$ with self-dual/anti-self-dual curvature. Clearly $(P,\nabla)$ can be pulled back to define a contact instanton on $M$.
\end{example}

\begin{example}
The previous examples are effectively $4$-dimensional objects. In this example we show that there are contact instantons which can not be reduced to $4$ dimensions. For this take $M$ to be a Sasaki-Einstein $5$-manifold. In Section \ref{sectransverse} we will introduce the transverse Levi-Civita connection $\overline{\nabla}$ on $H$ and show in Proposition \ref{propeinst} that the induced connection on $\wedge^{\pm} H^*$ is self-dual/anti-self-dual. If $M$ is irregular this gives non-trivial examples of contact instantons on irregular contact manifolds. Note that compact irregular Sasaki-Einstein $5$-manifolds exist, for example we consider the $Y^{p,q}$ spaces of \cite{gmsw} in Section \ref{secypq}.
\end{example}

%%%%%%%%%%%%%%%%%%%%%%%%%%%%%%%%%%%%%%%%%%%%%%%%

\subsection{Transverse bundles and connections}\label{sectransverse}

Let $\mathcal{F}$ be a foliation on a smooth manifold $M$ with tangent distribution $T\mathcal{F} \subseteq TM$ and let $\pi \colon  P \to M$ be a principal $G$-bundle. We say that $P$ is a {\em transverse} (or {\em foliated}) principal bundle \cite{mol} if there exists a foliation $\widetilde{\mathcal{F}}$ on $P$ with tangent distribution $T\widetilde{\mathcal{F}} \subseteq TP$ such that $\widetilde{\mathcal{F}}$ is $G$-invariant, $dim(\mathcal{F}) = dim(\widetilde{\mathcal{F}})$ and $\pi_*( T\widetilde{\mathcal{F}} ) = T\mathcal{F}$. An isomorphism of transverse principal $G$-bundles $(P,\widetilde{\mathcal{F}}),(P',\widetilde{\mathcal{F}}')$ is a principal bundle isomorphism $\phi \colon  P \to P'$ such that $\phi(\widetilde{\mathcal{F}}) = \widetilde{\mathcal{F}}'$. A connection $A \in \Omega^1(P,\mathfrak{g})$ on $P$ is called {\em transverse} (or {\em basic}) \cite{mol} if $A$ is basic with respect to the foliation $\widetilde{\mathcal{F}}$, that is $i_\xi A = \mathcal{L}_\xi (A) = 0$ for every vector field $\xi$ tangent to $\widetilde{\mathcal{F}}$. Every transverse structure on $P$ admits a transverse connection.\\

Let $\pi \colon  P \to M$ be a principal $G$-bundle on $M$ and $A$ a connection on $P$ with curvature $F_A$. If $F_A |_{T\mathcal{F}} = 0$ then the horizontal lift of $T\mathcal{F}$ with respect to $A$ is integrable and gives $P$ a transverse structure. With respect to this transverse structure we see that $A$ is transverse if and only if $i_\xi F_A = 0$ for all vector fields $\xi$ tangent to $\mathcal{F}$. Suppose this is the case. The space of transverse connections on $P$ with respect to this transverse structure is an affine space modelled on
\begin{equation*}
\Omega^1_B(M,\mathfrak{g}_P) = \{ \psi \in \Omega^1(M,\mathfrak{g}_P) \, | \, i_\xi \psi = 0, \, i_\xi d_A \psi = 0\, \,{\rm for} \, \, {\rm all} \, \xi \in \Gamma(M,T\mathcal{F}) \}.
\end{equation*}
Given a transverse structure $\widetilde{\mathcal{F}}$ on a principal $G$-bundle $P$ we may define basic characteristic classes as follows. Let $\nabla$ be a transverse connection on $P$ with curvature $F \in \Omega^2_B(M,\mathfrak{g}_P)$ and let $\varphi \in S^k(\mathfrak{g}^*)$ be an invariant polynomial on $\mathfrak{g}^*$. It is clear that $\varphi(F)$ is a closed basic $2k$-form on $M$ and thus defines a basic cohomology class $[ \varphi(F) ] \in H^{2k}_B(M)$. By a straightforward extension of the usual argument in Chern-Weil theory, we see that the cohomology class $[ \varphi(F) ]$ is independent of the choice of transverse connection. This shows that to every characteristic class $P \mapsto c(P) \in H^{ev}(M,\mathbb{R})$ defined over $\mathbb{R}$ there is a corresponding basic characteristic class $(P,\widetilde{\mathcal{F}}) \mapsto c_B(P,\widetilde{\mathcal{F}}) \in H^{ev}_B(M)$ such that $c_B(P,\widetilde{\mathcal{F}})$ is sent to $c(P)$ under the natural map $H^{ev}_B(M) \to H^{ev}(M,\mathbb{R})$.

\begin{example}
Let $M$ be a compact contact metric manifold of dimension at least $5$ and let $L = M \times \mathbb{C}$ be the trivial line bundle with $U(1)$-connection $\nabla = d + i\lambda \eta$, for $\lambda \in \mathbb{R}$. Then $\nabla$ has curvature $F = i\lambda \omega$. Let $E = L \oplus L^*$ be the associated $SU(2)$-bundle. The basic Pontryagin class $p_{1,B}(E)$ of $E$ is represented by
\begin{equation*}
-\frac{1}{8 \pi^2} Tr \left( \left[ \begin{matrix} i\lambda \omega & 0 \\ 0 & -i\lambda \omega \end{matrix} \right]^2 \right) = \frac{\lambda^2}{4 \pi^2} \omega^2.
\end{equation*}
Note that the class $[\omega^2]$ is non-trivial in $H^4_B(M)$ but maps to a trivial class in $H^4(M,\mathbb{R})$, since $\omega^2 = d( \eta \wedge \omega)$. This example shows that that basic characteristic classes can take on a continuous range of values and can be non-trivial even when the underlying bundle is trivial.
\end{example}

When $G = U(1)$ we may speak of transverse line bundles. With the aid of transverse connections we find that the group (under tensor product) of isomorphism classes of transverse $U(1)$-line bundles is given by the fibre product $H_B^2(M) \times_{H^2(M,\mathbb{R})} H^2(M,\mathbb{Z})$.\\

Let $\nabla$ be a contact instanton on a principal $G$-bundle $P \to M$ and let $F$ be the curvature. The self-dual/anti-self-dual condition on $F$ implies that $i_\xi F = 0$, where $\xi$ is the Reeb vector field. Thus, letting $\mathcal{F}$ be the foliation generated by $\xi$ we have that $P$ inherits the structure of a transverse principal bundle and $\nabla$ is a transverse connection. In understanding the moduli space of contact instantons it will be important to take into consideration the transverse structure of $P$.\\

On a compact $K$-contact manifold $M$ we have a long exact sequence relating basic cohomology to the usual cohomology of $M$ \cite{ton}:
\begin{equation}\label{gysin}
\cdots \to H_B^k(M) \buildrel j \over \longrightarrow H^k(M,\mathbb{R}) \to H^{k-1}_B(M) \buildrel \! \! \omega \wedge \over \longrightarrow H_B^{k+1}(M) \to \cdots
\end{equation}
called the {\em Gysin sequence} of $M$. In this sequence $j \colon  H_B^k(M) \to H^k(M,\mathbb{R})$ is the map induced by the inclusion $\Omega^k_B(M) \to \Omega^k(M)$ of basic forms. 
We let $d_V \colon  \Omega^k_H(M) \to \Omega^k_H(M)$ be given by $d_V \alpha = i_\xi d\alpha$ and $d_T \colon \Omega^k_H(M) \to \Omega^{k+1}_H(M)$ by $d_T \alpha = d\alpha - \eta \wedge d_V \alpha$. Let $d_T^* \colon  \Omega^k_H(M) \to \Omega^{k-1}_H(M)$ be the formal adjoint of $d_T$. If $M$ is a compact $K$-contact manifold then $d_T^*$ sends basic forms to basic forms. The restriction of $d_T^*$ to $\Omega^*_B(M)$ defines an operator $d_B^* \colon  \Omega^k_B(M) \to \Omega^{k-1}_B(M)$ which we may consider to be a formal adjoint to $d_B$. The {\em basic Laplacian} is then defined as $\Delta_B = d_B d_B^* + d_B^* d_B$. Then it is clear that a basic form $\alpha$ is basic harmonic, i.e. $\Delta_B \alpha = 0$, if and only if $d_B \alpha = d_B^* \alpha = 0$. Let $\mathcal{H}_B^k$ denote the space of basic harmonic $k$-forms. We have natural maps $\mathcal{H}_B^k \to H_B^k(M)$. From basic Hodge theory \cite{kato} these maps are isomorphisms when $M$ is $K$-contact. \\

Recall that when $M$ is $5$-dimensional we have a decomposition $\Omega^2_H(M) = \Omega^+_H(M) \oplus \Omega^-_H(M)$. If $M$ is $K$-contact then the Lie derivative $\mathcal{L}_\xi$ commutes with the Hodge star $*$ and we may speak of self-dual/anti-self-dual basic $2$-forms. This gives a corresponding decomposition $\Omega^2_B(M) = \Omega^+_B(M) \oplus \Omega^-_B(M)$ of basic $2$-forms. We now define groups $\mathcal{H}_B^{\pm}$ of basic harmonic $2$-forms which are self-dual/anti-self-dual giving a decomposition $\mathcal{H}_B^2 = \mathcal{H}_B^+ \oplus \mathcal{H}_B^-$. We let $H_B^{\pm}(M)$ denote the image of $\mathcal{H}_B^{\pm}$ in $H_B^2(M)$ and note that this gives isomorphisms $\mathcal{H}_B^{\pm} \simeq H_B^{\pm}(M)$. Since $\omega$ is self-dual, we may deduce from the Gysin sequence (\ref{gysin}) that the natural map $j \colon  \mathcal{H}_B^- \to H^2(M,\mathbb{R})$ is injective, while the natural map $j \colon  \mathcal{H}_B^+ \to H^2(M,\mathbb{R})$ has $1$-dimensional kernel spanned by $\omega$.

\begin{proposition}\label{propu1}
Let $M$ be a compact $K$-contact $5$-manifold and let $\mathcal{M}_{U(1)}^{\pm}$ denote the group (under tensor product) of isomorphism classes of $U(1)$-contact instantons on $M$. Then as abelian groups we have isomorphisms:
\begin{equation*}
\mathcal{M}_{U(1)}^{\pm} = \left( \mathcal{H}_B^{\pm}(M) \times_{H^2(M,\mathbb{R})} H^2(M,\mathbb{Z}) \right) \times \left( H^1(M,\mathbb{R})/H^1(M,\mathbb{Z}) \right).
\end{equation*}
\end{proposition}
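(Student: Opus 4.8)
The plan is to parametrise a $U(1)$-contact instanton by its curvature and its holonomy and to assemble the outcome into a split short exact sequence of abelian groups. Recall that a $U(1)$-connection $\nabla$ on a Hermitian line bundle $L\to M$ has curvature $F_\nabla$ for which $\tfrac{i}{2\pi}F_\nabla$ is a closed real $2$-form with de Rham class $c_1(L)_{\mathbb R}$. If $\nabla$ is an SD (resp.\ ASD) contact instanton then $i_\xi F_\nabla=0$, so $\tfrac{i}{2\pi}F_\nabla$ is transverse, and $d F_\nabla=0$, so it is basic and in fact lies in $\Omega^{\pm}_B(M)$. The one substantial point is that such a curvature form $\beta$ is automatically \emph{basic harmonic}. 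Indeed, since $M$ is $K$-contact the Hodge star commutes with $\mathcal L_\xi$ and hence induces a transverse Hodge star $\overline{*}$ on $\Omega^*_B(M)$, and, exactly as in four-dimensional Hodge theory, $d_B^*=-\,\overline{*}\,d_B\,\overline{*}$ on $\Omega^2_B(M)$. Therefore $d_B\beta=0$ together with $\overline{*}\beta=\pm\beta$ gives $d_B^*\beta=\mp\,\overline{*}(d_B\beta)=0$, so $\Delta_B\beta=0$; the decomposition $\mathcal H_B^2=\mathcal H_B^+\oplus\mathcal H_B^-$ then places $\beta$ in $\mathcal H_B^{\pm}(M)$.

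Next I would introduce
\[
\Phi\colon\mathcal M_{U(1)}^{\pm}\longrightarrow \mathcal H_B^{\pm}(M)\times_{H^2(M,\mathbb R)}H^2(M,\mathbb Z),\qquad (L,\nabla)\longmapsto\bigl(\tfrac{i}{2\pi}F_\nabla,\;c_1(L)\bigr).
\]
This is well defined, landing in the fibre product because $j$ carries the basic cohomology class of $\tfrac{i}{2\pi}F_\nabla$ to $c_1(L)_{\mathbb R}$, and it is a homomorphism because under a tensor product of line bundles with connection the curvatures and first Chern classes add. It is surjective: given $(h,c)$ in the fibre product, choose any $L$ with $c_1(L)=c$ and any connection on $L$; its curvature form has de Rham class $c_{\mathbb R}$, which by hypothesis equals that of $h$, so subtracting a suitable $1$-form from the connection produces a connection with curvature exactly $\tfrac{2\pi}{i}h$, and this is an SD/ASD contact instanton since $h\in\Omega^{\pm}_B(M)$. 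The kernel of $\Phi$ consists of the isomorphism classes of pairs $(L,\nabla)$ with $L$ topologically trivial (as $c_1(L)=0$) and $\nabla$ flat, i.e.\ of flat $U(1)$-connections on the trivial bundle modulo gauge. Writing $Z^1(M)$ for the closed real $1$-forms and $Z^1_{\mathbb Z}(M)$ for those with integral periods, two flat connections on the trivial bundle differ by an element of $Z^1(M)$ and the gauge transformations $M\to U(1)$ contribute exactly $Z^1_{\mathbb Z}(M)$, so this kernel is $Z^1(M)/Z^1_{\mathbb Z}(M)=H^1(M,\mathbb R)/H^1(M,\mathbb Z)$, included in $\mathcal M_{U(1)}^{\pm}$ via $[a]\mapsto(M\times\mathbb C,\,d+2\pi i\,a)$. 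Thus there is a short exact sequence of abelian groups
\[
0\longrightarrow H^1(M,\mathbb R)/H^1(M,\mathbb Z)\longrightarrow\mathcal M_{U(1)}^{\pm}\xrightarrow{\ \Phi\ }\mathcal H_B^{\pm}(M)\times_{H^2(M,\mathbb R)}H^2(M,\mathbb Z)\longrightarrow 0.
\]

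To finish I would split this sequence: $H^1(M,\mathbb R)/H^1(M,\mathbb Z)$ is a torus, hence a divisible abelian group, hence an injective $\mathbb Z$-module, so any extension of an abelian group by it splits. This yields the claimed isomorphism
\[
\mathcal M_{U(1)}^{\pm}\;\cong\;\bigl(\mathcal H_B^{\pm}(M)\times_{H^2(M,\mathbb R)}H^2(M,\mathbb Z)\bigr)\times\bigl(H^1(M,\mathbb R)/H^1(M,\mathbb Z)\bigr).
\]
(Alternatively one can build a section directly by fixing reference contact instantons on a generating set of line bundles, but the divisibility argument is cleaner.)

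I expect the main obstacle to be the claim that the curvature of a $U(1)$-contact instanton is basic harmonic. In the ASD case this is consistent with the fact recalled earlier that ASD contact instantons are Yang--Mills, so that $*F_\nabla$ is closed and $F_\nabla$ is genuinely harmonic on $M$; but in the SD case $*F_\nabla$ is in general not closed --- for instance $\nabla=d+i\lambda\eta$ has curvature $i\lambda\omega$ and $d(*F_\nabla)=d(\eta\wedge i\lambda\omega)=i\lambda\,\omega^2\neq0$, so $F_\nabla$ is not $\Delta_M$-harmonic although it is $\Delta_B$-harmonic --- and one really needs the transverse Hodge identity above, which is precisely the point at which the $K$-contact hypothesis enters (it is what makes $\overline{*}$ preserve basic forms). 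The remaining steps are bookkeeping with the classification of line bundles with connection; the only points needing a little care are that $\Phi$ and the inclusion are homomorphisms for the tensor product, that $Z^1_{\mathbb Z}(M)$ is exactly the image of the gauge group acting on connections, and that an isomorphism of contact instantons forces the underlying line bundles to be isomorphic so that the count is organised correctly over $c\in H^2(M,\mathbb Z)$.
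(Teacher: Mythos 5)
Your proof is correct, and it reaches the result by a mildly but genuinely different route from the paper's. Both arguments hinge on the same key observation, which you correctly isolate as the substantial point: the curvature of a $U(1)$-contact instanton is a closed basic self-dual/anti-self-dual form, and such forms are automatically basic harmonic because the transverse Hodge star preserves basic forms in the $K$-contact setting (the paper needs this too, in order to identify its space $\mathcal{C}^{\pm}$ of closed basic SD/ASD integral forms inside $\mathcal{H}_B^{\pm}$). Where you diverge is in how the extension problem is organised. The paper maps $\mathcal{M}_{U(1)}^{\pm}$ only onto the curvature space $\mathcal{C}^{\pm}$, so its kernel is the full group of flat line bundles $H^2_{\rm tors}(M,\mathbb{Z})\times H^1(M,\mathbb{R})/H^1(M,\mathbb{Z})$; it then splits the surjection by decomposing $\mathcal{C}^{+}=A\oplus \mathrm{Ker}(j)$ with $A$ free (hence liftable) and $\mathrm{Ker}(j)=\mathbb{R}\omega$ lifted explicitly by $d-2\pi i\lambda\eta$, and finally must recombine $\mathcal{C}^{\pm}\times H^2_{\rm tors}(M,\mathbb{Z})$ into the fibre product. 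You instead map directly onto the fibre product by recording $c_1(L)$ alongside the curvature, which moves the torsion into the target, shrinks the kernel to the identity component $H^1(M,\mathbb{R})/H^1(M,\mathbb{Z})$ of the flat moduli, and lets you split the sequence abstractly because a torus is divisible, hence an injective $\mathbb{Z}$-module. Your version buys a cleaner bookkeeping (no final reassembly of torsion into the fibre product) at the cost of a non-explicit splitting; the paper's version produces concrete representatives (notably the family $d-2\pi i\lambda\eta$ realising the $\omega$-direction, which is reused in the subsequent remark about the components of the SD moduli space being $\mathbb{R}\times T^{b^1(M)}$). Your sanity check that the SD curvatures are $\Delta_B$-harmonic but not $\Delta_M$-harmonic is also correct and is precisely why the transverse, rather than the ordinary, Hodge theory is the right tool here.
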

\begin{proof}
We give the proof in the SD case, the ASD case being simpler. Let $\mathcal{C}^+$ be the space of basic closed self-dual $2$-forms which have integral periods. Let $j \colon H_B^+(M) \to H^2(M,\mathbb{R})$ be the natural map from basic to ordinary cohomology and $i \colon H^2(M,\mathbb{Z}) \to H^2(M,\mathbb{R})$ the map induced by $\mathbb{Z} \to \mathbb{R}$. Set $A = j(H_B^+(M)) \cap i(H^2(M,\mathbb{Z}))$ and note that $A$ is a finitely generated free abelian group. Using the identification $H_B^+(M) = \mathcal{H}_B^+$ we obtain a short exact sequence $0 \to Ker(j) \to \mathcal{C}^+ \to A \to 0$, which may be split giving $\mathcal{C}^+ = A \oplus Ker(j)$. We recall also that $Ker(j)$ is the $1$-dimensional space spanned by $\omega$. Let $\nabla$ be a $U(1)$-contact instanton and $F_\nabla$ the curvature. The map $\nabla \mapsto \frac{i}{2\pi}F_\nabla$ defines a homomorphism $f \colon \mathcal{M}_{U(1)}^+ \to \mathcal{C}^+$ which is clearly surjective. We claim that there exists a splitting $\mathcal{C}^+ \to \mathcal{M}_{U(1)}^+$. Since $A$ is a free abelian group it suffices to give a lift $Ker(j) \to \mathcal{M}_{U(1)}^+$. For $\lambda \omega \in Ker(j)$ we take the trivial line bundle with connection $d - 2\pi i\lambda \eta$, which gives the desired lift. Thus $\mathcal{M}_{U(1)}^+ \simeq \mathcal{C}^+ \oplus Ker(f)$. The kernel of $f$ is the space of isomorphism classes of flat connections on $M$:
\begin{equation*}
Ker(f) = H^2_{{\rm tors}}(M,\mathbb{Z}) \times H^1(M,\mathbb{R})/H^1(M,\mathbb{Z}),
\end{equation*}
where $H^2_{{\rm tors}}(M,\mathbb{Z})$ is the torsion subgroup of $H^2(M,\mathbb{Z})$. The proposition now follows by noting that $\mathcal{C}^+ \times H^2_{{\rm tors}}(M,\mathbb{Z}) = \mathcal{H}^+_B(M) \times_{H^2(M,\mathbb{R})} H^2(M,\mathbb{Z})$.
\end{proof}

\begin{remark}
From Proposition \ref{propu1} we see that every connected component of the moduli space of $U(1)$ ASD-contact instantons is a torus $T^{b^1(M)}$ of dimension $b^1(M)$ and that the transverse structure of the underlying line bundle is fixed on each component. In contrast the connected components in the SD case are products $\mathbb{R} \times T^{b^1(M)}$ of a torus and a real line and the transverse structure of the underlying line bundle changes as one moves in the $\mathbb{R}$-direction.
\end{remark}

Let $G$ be a compact Lie group and denote the centre of $G$ by $Z(G)$. If $P \to M$ is principal $G$-bundle and $\nabla$ a connection on $P$, we let $Aut(\nabla)$ denote the group of gauge transformations of $P$ which are $\nabla$-constant. By choosing a basepoint one can identify $Aut(\nabla)$ with a closed subgroup of $G$. Clearly $Aut(\nabla)$ must contain $Z(G)$. We say that $\nabla$ is {\em irreducible} if $Aut(\nabla)  = Z(G)$. If this is not the case we say $\nabla$ is {\em reducible}.

\begin{proposition}\label{proptorus}
Let $G$ be a compact, connected, semisimple Lie group with trivial centre and let $\nabla$ be an irreducible contact instanton on $P$. The torus action of $T$ on $M$ lifts to an action of $T$ on $P$ by principal bundle isomorphisms preserving $\nabla$.
\end{proposition}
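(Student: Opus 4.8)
The plan is to lift the Reeb flow to $P$ by horizontal lifting, enlarge it to a torus by taking a closure inside the isometry group of a suitable metric on $P$, and then use irreducibility to identify that torus with $T$.

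First I would lift the flow of the Reeb field. Since $\nabla$ is a self-dual or anti-self-dual contact instanton, its curvature satisfies $i_\xi F_\nabla = 0$, so (as in Section \ref{sectransverse}) $P$ acquires the transverse structure determined by $\nabla$, and the $\nabla$-horizontal lift $\widetilde{\xi} \in \Gamma(TP)$ of $\xi$ is $G$-invariant. Because $P$ is compact, $\widetilde{\xi}$ is complete and its flow $\phi_t$ is a $1$-parameter group of principal bundle automorphisms of $P$ covering the isometries $\exp(t\xi)$ of $M$. Writing $\theta \in \Omega^1(P,\mathfrak{g})$ for the connection form, Cartan's formula gives $\mathcal{L}_{\widetilde{\xi}}\theta = i_{\widetilde{\xi}} d\theta$, and by the structure equation the bracket term drops out (as $\theta(\widetilde{\xi}) = 0$) while $i_{\widetilde{\xi}} d\theta$ is the horizontal lift of $i_\xi F_\nabla = 0$; hence $\mathcal{L}_{\widetilde{\xi}}\theta = 0$ and each $\phi_t$ preserves $\nabla$. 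This is the only point at which the contact instanton equation is used.

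Next I would pass to a compact group. Pick a bi-invariant inner product $\langle\cdot,\cdot\rangle$ on $\mathfrak{g}$ (available since $G$ is compact semisimple) and form the connection metric $g_P(v,w) = g(\pi_* v, \pi_* w) + \langle \theta(v), \theta(w)\rangle$ on $P$. Since $P$ is compact, $\mathrm{Isom}(P,g_P)$ is a compact Lie group by Myers--Steenrod. Any $\nabla$-preserving principal bundle automorphism of $P$ covering an isometry of $M$ lies in $\mathrm{Isom}(P,g_P)$: it preserves $\theta$, hence the horizontal distribution, hence $\pi^* g$ on horizontal vectors, and it preserves fundamental vector fields, hence $\langle\cdot,\cdot\rangle$ on vertical ones. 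In particular each $\phi_t \in \mathrm{Isom}(P,g_P)$. Let $K = \overline{\{\phi_t : t \in \mathbb{R}\}}$, a torus. The conditions ``$G$-equivariant'', ``$\phi^*\theta = \theta$'', and ``$\pi\circ\phi = f\circ\pi$ for some $f$ which is an isometry of $g$'' are each closed in $\mathrm{Isom}(P,g_P)$ and hold for every $\phi_t$, so every element of $K$ is a $\nabla$-preserving principal bundle automorphism covering an isometry of $M$; the induced action of $K$ on $M = P/G$ defines a homomorphism $q \colon K \to \mathrm{Isom}(M,g)$.

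Finally I would identify $K$ with $T$. Since $q$ is continuous with compact domain, $q(K)$ is closed and $q(K) = q\big(\overline{\{\phi_t\}}\big) = \overline{q(\{\phi_t\})} = \overline{\{\exp(t\xi)\}} = T$. The kernel of $q$ consists of $\nabla$-preserving gauge transformations, i.e. $\ker q = Aut(\nabla)$, which is $Z(G) = \{1\}$ because $\nabla$ is irreducible and $G$ has trivial centre. Hence $q \colon K \to T$ is an isomorphism of Lie groups, and $T \xrightarrow{q^{-1}} K \hookrightarrow \mathrm{Isom}(P,g_P)$ is a smooth action of $T$ on $P$ by $\nabla$-preserving principal bundle isomorphisms which, as $q\circ q^{-1} = \mathrm{id}_T$, covers the given $T$-action on $M$. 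The main obstacle is the bookkeeping of the middle step: one must verify that the relevant equivariance and connection-preserving conditions are genuinely closed in the compact group $\mathrm{Isom}(P,g_P)$, and that passing to the closure does not produce automorphisms covering more than $T$ on the base — this is exactly what the use of the connection metric and the identity $q(K) = \overline{q(\{\phi_t\})}$ control. The triviality of $\ker q$, on which everything hinges, uses the irreducibility hypothesis and $Z(G) = \{1\}$ in an essential way.
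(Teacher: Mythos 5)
Your proposal is correct and follows essentially the same route as the paper: horizontally lift the Reeb flow, check it preserves the connection via $i_\xi F=0$, take the closure of the lifted flow in $\mathrm{Isom}(P,g_P)$ for a connection metric, and use irreducibility together with $Z(G)=\{1\}$ to show the projection of this torus to $\mathrm{Isom}(M,g)$ is an isomorphism onto $T$. The only difference is that you spell out the closedness of the equivariance and connection-preserving conditions and the identity $q(\overline{\{\phi_t\}})=\overline{q(\{\phi_t\})}$, which the paper leaves implicit.
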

\begin{proof}
Let $B$ be an invariant metric on the Lie algebra $\mathfrak{g}$ of $G$. Using the connection $\nabla$ to decompose $TP$ into horizontal and vertical subbundles, we obtain a $G$-invariant metric $g_P$ by using $B$ on the vertical bundle and $g$ on the horizontal. Let $\tilde{\phi}_t$ be the $1$-parameter family of diffeomorphisms of $M$ integrating $\xi$. By integrating the horizontal lift $\tilde{\xi}$ of $\xi$ we obtain a $1$-parameter family $\tilde{\phi}_t$ of principal bundle isomorphisms covering the $1$-parameter family $\phi_t$. Moreover if $A \in \Omega^1(P,\mathfrak{g})$ is the connection form for $\nabla$ then $\mathcal{L}_{\tilde{\xi}}A = i_\xi F = 0$, so the $1$-parameter family $\tilde{\phi}_t$ preserves $\nabla$. Observe that for each $t$, $\tilde{\phi}_t$ is an isometry of $(P,g_P)$. 

Recall that the torus $T$ is defined as the closure of $\{ \phi_t \}$ in $Isom(M,g)$. Similarly we define a torus $\tilde{T}$ as the closure of $\{ \tilde{\phi}_t \}$ in $Isom(P,g_P)$, noting that $P$ and hence $Isom(P,g_P)$ are compact. Since each $\tilde{\phi}_t$ is a principal bundle isomorphism preserving $\nabla$, the same is true of each $t \in \tilde{T}$. This defines a homomorphism $f \colon \tilde{T} \to Isom(M,g)$ sending $\tilde{\phi}_t$ to $\phi_t$. From this it follows that $f(\tilde{T}) = T$. 
Let $K$ be the kernel of $f \colon  \tilde{T} \to T$ and let $\psi \in K$. Then $\psi \colon  P \to P$ is a principal bundle isomorphism covering the identity on $M$ and preserving $\nabla$. So $\psi$ is a gauge transformation covariantly constant with respect to $\nabla$. Now as we assume $\nabla$ is irreducible and $G$ has trivial centre, $\psi$ must be the identity and $f \colon  \tilde{T} \to T$ is an isomorphism. This gives the desired lift of $T$.

\end{proof}

\begin{remark}
If $\nabla$ is reducible or $G$ has non-trivial centre then we do not necessarily obtain a lift of $T$ to automorphisms of $(P,\nabla)$, but as in the above proof we obtain a torus $\tilde{T}$ acting on $P$ by automorphisms and a surjection $f \colon  \tilde{T} \to T$ with the action of $\tilde{T}$ on $P$ covering the action of $T$ on $M$.
\end{remark}

\begin{remark}
From the lifted action of $T$ one can recover the transverse structure on $P$. One simply takes the vector field $\tilde{\xi}$ tangent to the action of the $1$-parameter subgroup $\{\phi_t \} \subseteq T$ and this defines the foliation $\tilde{\mathcal{F}}$ on $P$. More generally, a lift of the torus $T$ to an action on $P$ by principal bundle isomorphisms determines a transverse structure on $P$ in the same manner.
\end{remark}

Let $M$ be a $K$-contact $5$-manifold with Levi-Civita connection $\nabla$. In a local foliated coordinate chart the metric $g$ depends only on the transverse coordinates. It follows that the restriction of $g$ to the contact distribution $H$ corresponds locally to a metric on the space of leaves of the Reeb foliation. Thus it makes sense to speak of the transverse Levi-Civita connection of $M$ which is a metric connection $\overline{\nabla}$ on $H$. Alternatively we can define $\overline{\nabla}$ by the relation
\begin{equation}\label{equlevi}
\nabla_X Y = \overline{\nabla}_X Y + \Pi(X,Y) \xi,
\end{equation}
for all $X,Y \in \Gamma(M,H)$, where $\Pi$ is a section of $H^* \otimes H^*$. In fact from (\ref{equlevi}) we must have $\Pi(X,Y) = -g( Y , \nabla_X \xi )$. Since $\overline{\nabla}$ is a transverse connection its curvature $R_T$ is a section of $\wedge^2 H^* \otimes \wedge^2 H^*$. We call $R_T$ the transverse Riemannian curvature of $M$. Likewise we can define the transverse Ricci curvature $Ric_T$ and transverse scalar curvature $s_T$ of $M$. With our conventions we find
\begin{align*}
Ric_T(X,Y) &= Ric(X,Y) + \frac{1}{2}g(X,Y) \\
s_T &= s + 1
\end{align*} % Important: these equations differ from eg Boyer and Galicki because there they use d\eta = 2 \omega
where $X,Y$ are horizontal. Following \cite{ahs} the transverse curvature $R_T$ can be viewed as a self-adjoint map $R_T\colon  \wedge^2 H^* \to \wedge^2 H^*$ which in an orthonormal frame $e_1, \dots , e_4$ takes the form $e^i \wedge e^j \mapsto \frac{1}{2} {R_T}_{ijkl} e^k \wedge e^l$. Under the decomposition $\wedge^2 H^* = \wedge^+ H^* \oplus \wedge^{-} H^*$ we have
\begin{equation}\label{equcurvdecom}
R_T = \left[ \begin{matrix} \frac{s_T}{12} + W^+_T & B_T \\ B_T^* & \frac{s_T}{12} + W^-_T \end{matrix} \right]
\end{equation}
where $W^\pm_T$ are the self-dual/anti-self-dual components of the transverse Weyl curvature and $B_T \colon  \wedge^- H^* \to \wedge^+ H^*$ corresponds to the trace-free part of the transverse Ricci curvature. We say that $M$ is {\em transverse Einstein} if the trace-free part of $Ric_T$ vanishes, in particular if $M$ is Einstein then it is automatically transverse Einstein. The trace-free part of $Ric_T$ vanishes if and only if $B_T = 0$ and we have:
\begin{proposition}\label{propeinst}
Let $M$ be a $K$-contact $5$-manifold which is transverse Einstein. The transverse Levi-Civita connection on $\wedge^+ H^*$ (resp. $\wedge^- H^*$) is a self-dual contact instanton (resp. anti-self-dual contact instanton) with structure group $SO(3)$. In either case the structure group lifts to $SU(2)$ if and only if $M$ is spin.
\end{proposition}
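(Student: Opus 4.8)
The plan is to reduce the statement to the classical four-dimensional Riemannian picture of Atiyah--Hitchin--Singer \cite{ahs}, applied transversally to the Reeb foliation. First I would set up the bundles and connections. Since $\overline{\nabla}$ is a metric connection on the oriented rank-$4$ bundle $H$, it preserves the transverse Hodge star $*_H \colon \wedge^2 H^* \to \wedge^2 H^*$, so the splitting $\wedge^2 H^* = \wedge^+ H^* \oplus \wedge^- H^*$ is $\overline{\nabla}$-parallel and $\overline{\nabla}$ induces metric connections on the rank-$3$ bundles $\wedge^\pm H^*$; since $SO(4)$ is connected these have holonomy, hence structure group, in $SO(3)$. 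Because $\overline{\nabla}$ is a transverse connection, so are the induced connections, and in particular the curvature $F^\pm$ of the connection on $\wedge^\pm H^*$ is a transverse $2$-form; using the canonical isomorphism $\mathfrak{so}(\wedge^\pm H^*) \cong \wedge^\pm H^*$ (valid because the bundle has rank $3$) we may view $F^\pm \in \Gamma(\wedge^2 H^* \otimes \wedge^\pm H^*)$.

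The heart of the argument is then purely algebraic and local. In a foliated chart $g|_H$ is literally a Riemannian metric on the local leaf space and $\overline{\nabla}$ its Levi--Civita connection, so the standard four-dimensional identification of the curvature of the induced connection on $\wedge^+$, resp. $\wedge^-$, with a block of the Riemann curvature operator applies verbatim. Concretely, under $\wedge^2 H^* \otimes \wedge^+ H^* = (\wedge^+ H^* \otimes \wedge^+ H^*) \oplus (\wedge^- H^* \otimes \wedge^+ H^*)$, the component of $F^+$ in the first summand (self-dual in the form slot) is the symmetric operator $\tfrac{s_T}{12}\mathrm{Id} + W^+_T$ and its component in the second is, up to a fixed nonzero constant, the trace-free part of $Ric_T$, with the symmetric statement for $F^-$. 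These are exactly the blocks of (\ref{equcurvdecom}), so $F^\pm$ is self-dual, resp. anti-self-dual, in its form slot if and only if the trace-free part of $Ric_T$ vanishes. Since $M$ is transverse Einstein this holds, so $F^+ \in \Gamma(\wedge^+ H^* \otimes \wedge^+ H^*)$ and $F^- \in \Gamma(\wedge^- H^* \otimes \wedge^- H^*)$; equivalently $F^+$ is a self-dual transverse $2$-form and $F^-$ an anti-self-dual one. As a transverse $2$-form $\alpha$ satisfies $*\alpha = \pm\eta\wedge\alpha$ exactly when $\alpha \in \Omega^\pm_H(M)$, this shows the connection on $\wedge^+ H^*$ is a self-dual contact instanton and the connection on $\wedge^- H^*$ an anti-self-dual contact instanton, each with structure group $SO(3)$.

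For the spin clause, note that since $SU(2) \to SO(3)$ is a covering, a reduction of the $SO(3)$-bundle $\wedge^\pm H^*$ to $SU(2) = Spin(3)$ automatically carries the $SO(3)$-connection along, and such a reduction exists if and only if the obstruction class $w_2(\wedge^\pm H^*) \in H^2(M;\mathbb{Z}_2)$ vanishes. A short computation with the splitting principle, using $w_1(H)=0$, gives $w_2(\wedge^\pm H^*) = w_2(H)$, and from $TM = V \oplus H$ with $V$ the trivial line bundle spanned by $\xi$ we get $w_2(H) = w_2(TM)$; hence $\wedge^\pm H^*$ lifts to $SU(2)$ exactly when $w_2(TM)=0$, i.e. exactly when $M$ is spin. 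I expect the only step demanding real care to be the algebraic one: pinning down the identifications ($\mathfrak{so}(3) \cong \mathbb{R}^3$, which off-diagonal block of the curvature of $\wedge^\pm H^*$ is which piece of the trace-free $Ric_T$, and the normalizing constants) so that $F^\pm$ matches (\ref{equcurvdecom}) on the nose; but this introduces nothing genuinely new beyond the observation that $g|_H$ and $\overline{\nabla}$ are ordinary Riemannian data on local leaf spaces, so the four-dimensional AHS computation carries over unchanged.
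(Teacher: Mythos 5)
Your argument is correct and is exactly the route the paper intends: the proposition is stated as an immediate consequence of the transverse Atiyah--Hitchin--Singer curvature decomposition (\ref{equcurvdecom}), where the off-diagonal block $B_T$ (the trace-free transverse Ricci) is precisely the anti-self-dual part of the curvature of the induced connection on $\wedge^+ H^*$ (and dually for $\wedge^- H^*$), and the spin criterion follows from $w_2(\wedge^\pm H^*) = w_2(H) = w_2(TM)$. Nothing in your write-up departs from or adds to what the paper's setup already supplies.
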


%%%%%%%%%%%%%%%%%%%%%%%%%%%%%%%%%%%%%%%%%%%%%%%%%%%%%%%%%%%%%%

\section{Moduli spaces and deformations}\label{secmoduli}

%%%%%%%%%%%%%%%%%%%%%%%%%%%%%%%%%%%%%%%%%%%%%%%%%%%%%%%%%%%%%%

\subsection{The deformation complex}\label{secdeform}

To study the local structure of the moduli space we consider the deformation theory of the contact instanton equations. We will focus mostly on the anti-self-dual case, adding remarks on the self-dual case when differences arise.\\

Suppose that $\nabla$ is an ASD contact instanton with curvature $F \in \Omega^{-}_{H}(M,\mathfrak{g}_P)$. Let $\Omega^k_H(M,\mathfrak{g}_P)$ be the space of sections $\alpha \in \Omega^k(M,\mathfrak{g}_P)$ for which $i_\xi \alpha = 0$. We let $d_V \colon  \Omega^k_H(M,\mathfrak{g}_P) \to \Omega^{k+1}_H(M,\mathfrak{g}_P)$ be given by $d_V \alpha = i_\xi d_\nabla \alpha$ and further define $d_T \colon  \Omega^k_H(M,\mathfrak{g}_P) \to \Omega^{k+1}_H(M,\mathfrak{g}_P)$ by $d_T \alpha = d_\nabla \alpha - \eta \wedge d_V \alpha$.

\begin{lemma}\label{lemideal}
Let $I \subset \Omega^*(M,\mathfrak{g}_P)$ denote the algebraic ideal of the graded Lie algebra $\Omega^*(M,\mathfrak{g}_P)$ generated by $\Omega^{-}_{H}(M,\mathfrak{g}_P)$. If $M$ is $K$-contact we have $d_\nabla I \subseteq I$.
\end{lemma}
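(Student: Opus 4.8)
The plan is to reduce the lemma to the single statement that $d_\nabla$ maps the generators of $I$ into $I$, and then to prove that by splitting $d_\nabla\gamma$ into its component along $\eta$ and its transverse component. I would begin by recording the two compatible Leibniz rules for $d_\nabla$: for a scalar form $\beta\in\Omega^*(M)$ and $\psi\in\Omega^*(M,\mathfrak{g}_P)$ one has $d_\nabla(\beta\wedge\psi)=d\beta\wedge\psi+(-1)^{|\beta|}\beta\wedge d_\nabla\psi$, and for the graded bracket $d_\nabla[\alpha,\psi]=[d_\nabla\alpha,\psi]+(-1)^{|\alpha|}[\alpha,d_\nabla\psi]$. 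A general element of $I$ is a finite sum of terms $\beta\wedge[\alpha_1,[\alpha_2,\dots,[\alpha_k,\gamma]\dots]]$ with $\beta\in\Omega^*(M)$, $\alpha_i\in\Omega^*(M,\mathfrak{g}_P)$ and $\gamma\in\Omega^-_H(M,\mathfrak{g}_P)$. Applying the two Leibniz rules, $d_\nabla$ of such a term is a sum of terms in which $d_\nabla$ has landed on exactly one factor with the nesting unchanged; those where it hits a $\beta$ or an $\alpha_i$ are visibly still in $I$, and the one where it hits the innermost $\gamma$ lies in $I$ as soon as $d_\nabla\gamma\in I$, because $I$ is closed under bracketing with the $\alpha_i$ and wedging by $\beta$. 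Thus it suffices to prove
\begin{equation*}
d_\nabla\gamma\in I\quad\text{for every }\gamma\in\Omega^-_H(M,\mathfrak{g}_P).
\end{equation*}

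To do this I would write $d_\nabla\gamma=\eta\wedge d_V\gamma+d_T\gamma$ with $d_V\gamma=i_\xi d_\nabla\gamma\in\Omega^2_H(M,\mathfrak{g}_P)$ and $d_T\gamma=d_\nabla\gamma-\eta\wedge d_V\gamma\in\Omega^3_H(M,\mathfrak{g}_P)$. The transverse piece $d_T\gamma$ is automatically in $I$: since $H$ has rank $4$, every nonzero anti-self-dual $2$-form $\sigma$ on $H_x$ is symplectic, so $\sigma\wedge\cdot\colon H_x^*\to\wedge^3H_x^*$ is an isomorphism and hence $\wedge^3H_x^*=\sigma\wedge H_x^*\subseteq\wedge^-H_x^*\wedge H_x^*$; therefore $d_T\gamma$ lies pointwise in $H^*\wedge(\wedge^-H^*\otimes\mathfrak{g}_P)$, and a partition-of-unity argument (using compactness of $M$) writes it globally as a finite sum of terms $\theta\wedge\gamma'$ with $\theta\in\Omega^1(M)$ and $\gamma'\in\Omega^-_H(M,\mathfrak{g}_P)$, so $d_T\gamma\in I$. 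For the $\eta$-piece, since $I$ is closed under wedging by scalar forms it is enough to show $d_V\gamma\in\Omega^-_H(M,\mathfrak{g}_P)$. Using $i_\xi\gamma=0$ and the Cartan formula, $d_V\gamma=i_\xi d_\nabla\gamma=\mathcal{L}^\nabla_\xi\gamma$, the covariant Lie derivative $\mathcal{L}^\nabla_\xi:=i_\xi d_\nabla+d_\nabla i_\xi$. In a local trivialisation $\nabla=d+A$ one computes $\mathcal{L}^\nabla_\xi\gamma=\mathcal{L}_\xi\gamma+[i_\xi A,\gamma]$, where $\mathcal{L}_\xi$ now acts componentwise on the scalar forms; the bracket term touches only the $\mathfrak{g}_P$-values and so stays transverse and anti-self-dual. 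This is exactly where the $K$-contact hypothesis enters: $\xi$ is Killing, so $\mathcal{L}_\xi$ commutes with $*$, while $\mathcal{L}_\xi\eta=0$ (as $\xi$ is the Reeb field) and $i_\xi\mathcal{L}_\xi\gamma=\mathcal{L}_\xi(i_\xi\gamma)=0$; hence $\mathcal{L}_\xi$ preserves the subbundle $\wedge^-H^*\otimes\mathfrak{g}_P$ of anti-self-dual transverse forms. Therefore $d_V\gamma\in\Omega^-_H(M,\mathfrak{g}_P)$, $\eta\wedge d_V\gamma\in I$, and $d_\nabla\gamma\in I$.

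The only genuinely non-formal ingredient here is the \emph{claim that $\mathcal{L}^\nabla_\xi$ preserves anti-self-dual transverse $\mathfrak{g}_P$-valued forms}, and I expect this to be the crux: it is precisely where $K$-contactness is indispensable, since without $\xi$ Killing the Hodge star need not commute with $\mathcal{L}_\xi$ and the $\eta$-component $\eta\wedge i_\xi d_\nabla\gamma$ of $d_\nabla\gamma$ could fail to lie in $I$. The remaining steps — the Leibniz reduction and the identity $\wedge^3H^*=\wedge^-H^*\wedge H^*$ in four transverse dimensions — are formal, and the self-dual case goes through verbatim with $\wedge^-$ replaced by $\wedge^+$, using $\wedge^3H^*=\wedge^+H^*\wedge H^*$ just as well.
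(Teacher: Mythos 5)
Your proof is correct and follows essentially the same route as the paper: reduce to generators, split $d_\nabla\gamma$ into $\eta\wedge d_V\gamma$ plus a transverse piece, and use the Killing property of $\xi$ (hence $\mathcal{L}_\xi$ commuting with $*$ and $\mathcal{L}_\xi\eta=0$) to see that $d_V\gamma=\mathcal{L}^\nabla_\xi\gamma$ stays anti-self-dual and transverse. The only difference is that you spell out two points the paper treats as immediate, namely the Leibniz reduction to generators and the identity $\wedge^3H^*=\wedge^-H^*\wedge H^*$; both are correctly argued.
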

\begin{proof}
It suffices to show that $d_\nabla ( \alpha \otimes \psi ) \in I$, where $\alpha \in \Omega^{-}_{H}(M)$ and $\psi \in \Omega^0(M,\mathfrak{g}_P)$. We have $d_\nabla( \alpha \otimes \psi) = \eta \wedge d_V \alpha \otimes \psi + d_H \alpha \otimes \psi + \alpha \wedge d_\nabla \psi$. The last two terms clearly belong to $I$ so it remains to show that $d_V \alpha = \mathcal{L}_\xi \alpha \in \Omega^{-}_{H}(M)$. However if $M$ is $K$-contact, $\xi$ is a Killing vector and it follows that $\mathcal{L}_\xi  \Omega^{-}_{H}(M) \subseteq \Omega^{-}_{H}(M)$ as required.
\end{proof}

We assume henceforth that $M$ is $K$-contact. Let $L^* = L^*(M,\mathfrak{g}_P)$ be the graded Lie algebra given by the quotient $\Omega^*(M,\mathfrak{g}_P)/I$. By Lemma \ref{lemideal} we have that $d_\nabla$ descends to a derivation $D \colon  L^k \to L^{k+1}$. Moreover since $(d_\nabla)^2 = F \in I$, we have $D^2 = 0$. Thus $(L^*, D)$ is a differential graded Lie algebra. Using the decomposition of forms induced by the splitting $TM = V \oplus H$, we obtain identifications
\begin{equation}\label{dgla}
\begin{aligned}
    L^0 &= \Omega^0(M,\mathfrak{g}_P), & L^1 &= \Omega^1(M,\mathfrak{g}_P), \\ 
    L^2 &= \Omega^{+}_{H}(M,\mathfrak{g}_P) \oplus \eta \wedge \Omega^1_H(M,\mathfrak{g}_P), & L^3 &= \eta \wedge \Omega^{+}_{H}(M,\mathfrak{g}_P).
\end{aligned}
\end{equation}
and $L^k = 0$ for $k > 3$. If $(L^* , D)$ is a differential graded Lie algebra recall that an element $\omega \in L^1$ is called a {\em Maurer-Cartan element} if it satisfies $D \omega + \frac{1}{2}[\omega , \omega] = 0$. From the definition of $L^*(M,\mathfrak{g}_P)$ we have:

\begin{proposition}
Let $\psi \in L^1(M,\mathfrak{g}_P) = \Omega^1(M,\mathfrak{g}_P)$. The connection $\nabla + \psi$ is an ASD contact instanton if and only if $\psi$ is a Maurer-Cartan element of $(L^*(M,\mathfrak{g}_P),D)$.
\end{proposition}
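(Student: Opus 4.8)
The plan is to compare the curvature of the perturbed connection $\nabla + \psi$ with the anti-self-duality condition, and then transport the computation to the quotient differential graded Lie algebra $(L^*,D)$. First I would recall the standard formula for how curvature transforms under an affine change of connection, namely $F_{\nabla+\psi} = F + d_\nabla\psi + \tfrac12[\psi,\psi]$, where $F = F_\nabla$ is the curvature of the fixed background ASD contact instanton, so that $F \in \Omega^{-}_H(M,\mathfrak{g}_P)$. By the characterisation of anti-self-dual $2$-forms recalled in Section~\ref{secci} — a $2$-form $\alpha$ satisfies $*\alpha = -\eta\wedge\alpha$ if and only if $\alpha = -i_\xi(*\alpha)$, which in particular forces $i_\xi\alpha = 0$ — the connection $\nabla+\psi$ is an ASD contact instanton precisely when $F_{\nabla+\psi} \in \Omega^{-}_H(M,\mathfrak{g}_P)$.

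The second step is to identify $\Omega^{-}_H(M,\mathfrak{g}_P)$ with the degree-$2$ component of the ideal $I$. Since $I$ is the algebraic ideal generated by the degree-$2$ subspace $\Omega^{-}_H(M,\mathfrak{g}_P)$, and the only graded pieces one may wedge against to land back in degree $2$ are the degree-$0$ elements $\Omega^0(M,\mathfrak{g}_P)$, a short degree count gives $I \cap \Omega^2(M,\mathfrak{g}_P) = \Omega^{-}_H(M,\mathfrak{g}_P)$ exactly. Hence the quotient map $q\colon \Omega^2(M,\mathfrak{g}_P) \to L^2$ has kernel precisely $\Omega^{-}_H(M,\mathfrak{g}_P)$, and the ASD condition on $\nabla+\psi$ is equivalent to $q(F_{\nabla+\psi}) = 0$.

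Finally I would push the curvature identity through $q$. Since $F \in I$ we have $q(F) = 0$, and by the very construction of $(L^*,D)$ from Lemma~\ref{lemideal} the map $q$ intertwines $d_\nabla$ with $D$ and the graded bracket on $\Omega^*(M,\mathfrak{g}_P)$ with that on $L^*$, while being the identity in degrees $0$ and $1$. Therefore $q(F_{\nabla+\psi}) = D\psi + \tfrac12[\psi,\psi]$, so $\nabla+\psi$ is an ASD contact instanton if and only if $D\psi + \tfrac12[\psi,\psi] = 0$, i.e.\ $\psi$ is a Maurer-Cartan element of $(L^*(M,\mathfrak{g}_P),D)$.

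There is no deep obstacle here; the statement is essentially a bookkeeping consequence of Lemma~\ref{lemideal} together with the definition of the quotient dgla. The one point genuinely requiring care is the equality $I \cap \Omega^2(M,\mathfrak{g}_P) = \Omega^{-}_H(M,\mathfrak{g}_P)$, which is what makes ``curvature lies in the ideal'' coincide with ``curvature is anti-self-dual''; this is exactly where the grading of the generators of $I$ is used. A secondary, purely conventional, point is to keep track of the normalisation relating $\tfrac12[\psi,\psi]$ to $\psi\wedge\psi$ in the curvature formula.
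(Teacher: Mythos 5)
Your argument is correct and is exactly the reasoning the paper has in mind — the paper in fact states this proposition without proof, as an immediate consequence of the definition of the quotient dgla, and your write-up simply makes explicit the key points (that $I\cap\Omega^2(M,\mathfrak{g}_P)=\Omega^{-}_H(M,\mathfrak{g}_P)$, that the quotient map is the identity in degrees $0$ and $1$, and that it intertwines $d_\nabla$ with $D$ and the brackets). No issues.
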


The terms of $L^*$ may be arranged into a complex, the {\em deformation complex} for $\nabla$:
\begin{equation}\label{equellcpx}
0 \longrightarrow L^0 \buildrel D \over \longrightarrow L^1 \buildrel D \over \longrightarrow L^2 \buildrel D \over \longrightarrow L^3 \longrightarrow 0.
\end{equation}

A direct computation shows that (\ref{equellcpx}) is an elliptic complex. If $M$ is compact then the associated cohomology groups, denoted $H^k(\mathfrak{g}_P)$, are finite dimensional. For $k=0,1,2$ we may interpret these groups in terms of the contact instanton equation as follows:
\begin{itemize}
\item{$H^0(\mathfrak{g}_P)$ is the Lie algebra of infinitesimal automorphisms of $\nabla$,}
\item{$H^1(\mathfrak{g}_P)$ represents infinitesimal deformations of $\nabla$ as a contact instanton,}
\item{$H^2(\mathfrak{g}_P)$ may be used to describe the obstruction to extending an infinitesimal deformation to a genuine deformation.}
\end{itemize}
It is of particular importance to calculate the dimension of $H^1(\mathfrak{g}_P)$, since this represents the expected dimension of the moduli space of contact instantons. As $M$ is odd-dimensional the index for the elliptic complex (\ref{equellcpx}) vanishes giving ${\rm dim}H^0(\mathfrak{g}_P) - {\rm dim}H^1(\mathfrak{g}_P) + {\rm dim}H^2(\mathfrak{g}_P) - {\rm dim}H^3(\mathfrak{g}_P) = 0$. This turns out not to be useful in determining the dimension of $H^1(\mathfrak{g}_P)$ and one has to work harder to determine the dimension of the moduli space.\\

The decomposition of $TM$ into vertical and horizontal components determines a bi-grading on $\Omega^*(M,\mathfrak{g}_P)$ by setting $\Omega^{p,q}(M,\mathfrak{g}_P) = \Gamma(M , \wedge^p H^* \otimes \wedge^q V^* \otimes \mathfrak{g}_P)$. The ideal $I$ generated by $\Omega^{-}_H(M,\mathfrak{g}_P)$ is a bi-graded ideal, hence the bi-grading passes to the quotient $L^*$, defining spaces $L^{p,q}$. Note that the contraction $i_\xi \colon \Omega^*(M,\mathfrak{g}_P) \to \Omega^{*-1}(M,\mathfrak{g}_P)$ sends $I$ to itself, so defines a contraction $i_\xi \colon  L^* \to L^{*-1}$ of bi-degree $(0,-1)$. Letting $L_H^k$ denote the kernel of $i_\xi$ on $L^k$, we find that $L^{k,0} = L_H^k$. Similarly the wedge operation $\eta \wedge \colon  \Omega^*(M,\mathfrak{g}_P) \to \Omega^{*+1}(M,\mathfrak{g}_P)$ descends to $\eta \wedge\colon  L^* \to L^{*+1}$ having bi-degree $(0,1)$. It is clear that $\eta \colon L^{k,0} \to L^{k,1}$ is an isomorphism and hence we identify $L^{k,1}$ with $L_H^k$. Similarly the wedge operation $\omega \wedge \colon  \Omega^*(M,\mathfrak{g}_P) \to \Omega^{*+2}(M,\mathfrak{g}_P)$ descends to an operator $L^* \to L^{*+2}$ which we continue to denote by $\alpha \mapsto \omega \wedge \alpha$.\\

Define $D_V \colon  L^*_H \to L^*_H$ of degree $0$ by $D_V \alpha = i_\xi D\alpha$ and $D_T\colon  L^*_H \to L^{*+1}_H$ of degree $1$ by $D_T \alpha = D \alpha - \eta \wedge D_V \alpha$. Then for $\alpha \in L^{k,0} = L^k_H$ we have $D\alpha = D_T \alpha + \eta \wedge D_V \alpha$, while for $\eta \wedge \beta \in L^{k,1} = \eta \wedge L^k_H$ we have $D(\eta \wedge \beta) = \omega \wedge \beta - \eta \wedge D_T \beta$. From this we see that the bi-complex $(L^{*,*} , D)$ has the following form:
\begin{equation*}
\xymatrix{
\Omega^0_H(M,\mathfrak{g}_P) \ar[r]^{-D_T} \ar@/^/[rrd]^(.7){\omega \wedge } & \Omega^1_H(M,\mathfrak{g}_P) \ar[r]^{-D_T} & \Omega^{+}_H(M,\mathfrak{g}_P) \\
\Omega^0_H(M,\mathfrak{g}_P) \ar[u]^{D_V} \ar[r]^{D_T} & \Omega^1_H(M,\mathfrak{g}_P) \ar[u]^{D_V} \ar[r]^{D_T} & \Omega^{+}_H(M,\mathfrak{g}_P) \ar[u]^{D_V}.
}
\end{equation*}
From $D^2 = 0$ we have $D_T D_V = D_V D_T$ and $D_T^2 \alpha = -\omega \wedge D_V \alpha$, where $\alpha \in \Omega^0(M,\mathfrak{g}_P)$.\\

The deformation complex for self-dual contact instantons is much the same, with one important distinction. Since $\omega$ is self-dual, terms of the form $\omega \wedge \psi$ are projected out and the complex takes the form
\begin{equation*}
\xymatrix{
\Omega^0_H(M,\mathfrak{g}_P) \ar[r]^{-D_T} & \Omega^1_H(M,\mathfrak{g}_P) \ar[r]^{-D_T} & \Omega^{-}_H(M,\mathfrak{g}_P) \\
\Omega^0_H(M,\mathfrak{g}_P) \ar[u]^{D_V} \ar[r]^{D_T} & \Omega^1_H(M,\mathfrak{g}_P) \ar[u]^{D_V} \ar[r]^{D_T} & \Omega^{-}_H(M,\mathfrak{g}_P) \ar[u]^{D_V}.
}
\end{equation*}
In this case we have $D_T D_V = D_V D_T$ and $D_T^2 =0$.\\

We say that an element $\alpha \in L^k$ is {\em basic} if $i_\xi \alpha = 0$ and $i_\xi D\alpha = 0$. This is precisely the kernel of $D_V \colon  L^k_H \to L^k_H$. We let $L_B^*$ denote the complex of basic forms. Observe that if $\alpha$ is basic then so is $D \alpha$, hence $D$ restricts to a differential $D_B \colon  L_B^k \to L_B^k$ on basic elements. This defines the {\em basic deformation complex}
\begin{equation}\label{basicdc}
0 \to \Omega^0_B(M,\mathfrak{g}_P)  \buildrel D_B \over \longrightarrow \Omega^1_B(M,\mathfrak{g}_P)  \buildrel D_B \over \longrightarrow \Omega^{+}_B(M,\mathfrak{g}_P) \to 0.
\end{equation}
We let $H^*_B(\mathfrak{g}_P)$ denote the cohomology of this complex. Note that $D_B$ is only defined on basic sections so this is not a complex of differential operators in the usual sense, much less an elliptic complex. The basic complex is however a {\em transverse elliptic complex}, transverse to the foliation of $M$ by the Reeb vector field $\xi$. This implies that the cohomology $H^*_B(\mathfrak{g}_P)$ is finite dimensional \cite{elk}.

\begin{proposition}\label{propcohom}
Suppose that $\nabla$ is an ASD contact instanton. We have a long exact sequence
\begin{equation}\label{exseqharm}
\dots \to H^{k-2}_B(\mathfrak{g}_P) \buildrel \! \! \omega \wedge \over \longrightarrow H^k_B(\mathfrak{g}_P) \to H^k(\mathfrak{g}_P) \to H^{k-1}_B(\mathfrak{g}_P) \buildrel \! \! \omega \wedge \over \longrightarrow H^{k+1}_B(\mathfrak{g}_P) \to \cdots.
\end{equation}
Suppose that $\nabla$ is an SD contact instanton. Then
\begin{equation*}
H^k(\mathfrak{g}_P) \simeq H^k_B(\mathfrak{g}_P) \oplus H^{k-1}_B(\mathfrak{g}_P).
\end{equation*}
\end{proposition}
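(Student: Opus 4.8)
The plan is to compute $H^*(\mathfrak{g}_P)$, the cohomology of the elliptic complex $(L^*,D)$, by restricting to the subcomplex of forms invariant under a natural torus action, and then to recognise that invariant subcomplex as the mapping cone of the operator $\omega\wedge$ on the basic deformation complex (\ref{basicdc}). The two assertions will then be, respectively, the long exact sequence and the splitting attached to this mapping cone.

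First I would observe that, since $\nabla$ is a contact instanton, $\mathcal{L}_{\tilde\xi}A = i_\xi F = 0$ for the horizontal lift $\tilde\xi$ of $\xi$, so --- exactly as in the proof of Proposition \ref{proptorus} and the remark following it, neither of which requires $\nabla$ to be irreducible --- the closure $\tilde T$ of the flow of $\tilde\xi$ is a torus acting on $P$ by automorphisms that preserve $\nabla$ and cover the $T$-action on $M$. Because $\tilde T$ acts by isometries of $(P,g_P)$ and fixes $\nabla$, the induced action on $\Omega^*(M,\mathfrak{g}_P)$ commutes with $d_\nabla$ and with the Hodge star, preserves the ideal $I$ generated by $\Omega^-_H(M,\mathfrak{g}_P)$, and hence descends to an action on $(L^*,D)$ by chain automorphisms commuting with the Laplacian $\Delta$ of the complex. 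The generator of this action is $\mathcal{L}_{\tilde\xi} = D\circ i_\xi + i_\xi\circ D$, with $i_\xi\colon L^*\to L^{*-1}$ as in \textsection\ref{secdeform}. Hence for any $\Delta$-harmonic $\alpha$ the element $\mathcal{L}_{\tilde\xi}\alpha = D(i_\xi\alpha)$ is simultaneously exact and harmonic, so it vanishes; thus $\tilde T$ acts trivially on the finite-dimensional space $\mathcal{H}^*$ of harmonic elements, and therefore on $H^*(\mathfrak{g}_P)\cong\mathcal{H}^*$. Since averaging over $\tilde T$ commutes with $D$ and every class has a harmonic (hence $\tilde T$-invariant) representative, the inclusion $(L^*)^{\tilde T}\hookrightarrow L^*$ of the invariant subcomplex is a quasi-isomorphism.

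Next I would identify $(L^*)^{\tilde T}$. As $\tilde T$ preserves $\xi$, $\eta$, $\omega$ and the decomposition $TM=V\oplus H$, an invariant element of $L^k$ has the form $\alpha_0+\eta\wedge\alpha_1$ with $\alpha_0,\alpha_1$ transverse and $\tilde T$-invariant; but a transverse $\tilde T$-invariant form is killed by $D_V=\mathcal{L}_{\tilde\xi}$ and so is basic. With the conventions of (\ref{dgla}) this gives $(L^k)^{\tilde T}=\Omega^k_B(M,\mathfrak{g}_P)\oplus\eta\wedge\Omega^{k-1}_B(M,\mathfrak{g}_P)$, and the formulas $D\alpha_0=D_B\alpha_0$ and $D(\eta\wedge\alpha_1)=\omega\wedge\alpha_1-\eta\wedge D_B\alpha_1$ (with the term $\omega\wedge\alpha_1$ projected out in the SD case) show that $(L^*)^{\tilde T}$ is precisely the mapping cone of the chain map $\omega\wedge\colon(\Omega^{*-2}_B(M,\mathfrak{g}_P),D_B)\to(\Omega^*_B(M,\mathfrak{g}_P),D_B)$ in the ASD case, and the mapping cone of the zero map --- equivalently the complex $(\Omega^*_B,D_B)\oplus(\Omega^{*-1}_B,-D_B)$ --- in the SD case. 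The cohomology of $(L^*)^{\tilde T}$ is finite-dimensional because the basic complex is transverse elliptic.

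Finally, the long exact sequence of a mapping cone gives, in the ASD case,
\[
\cdots\to H^{k-2}_B(\mathfrak{g}_P)\xrightarrow{\ \omega\wedge\ } H^k_B(\mathfrak{g}_P)\to H^k((L^*)^{\tilde T})\to H^{k-1}_B(\mathfrak{g}_P)\xrightarrow{\ \omega\wedge\ } H^{k+1}_B(\mathfrak{g}_P)\to\cdots,
\]
which, combined with $H^k((L^*)^{\tilde T})\cong H^k(\mathfrak{g}_P)$, is (\ref{exseqharm}); in the SD case the cone of the zero map yields at once $H^k(\mathfrak{g}_P)\cong H^k_B(\mathfrak{g}_P)\oplus H^{k-1}_B(\mathfrak{g}_P)$. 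The hard part, and the reason for routing the argument through the invariant subcomplex rather than arguing directly with $D_V$, is the quasi-isomorphism $(L^*)^{\tilde T}\simeq L^*$: when $M$ is irregular the operator $D_V$ has non-closed range and no bounded inverse, so identifying $H^*(\mathfrak{g}_P)$ with the cohomology of $(L^*)^{\tilde T}$ genuinely depends on Hodge theory for the elliptic complex $(L^*,D)$ together with the triviality of the $\tilde T$-action on its harmonic representatives.
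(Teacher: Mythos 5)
Your proof is correct, and it takes a genuinely different route from the paper's. The paper works with the modified transverse Laplacian $\Delta_T = D_TD_T^* + D_T^*D_T - D_V^2$: it first identifies $H^k_B(\mathfrak{g}_P)$ with the $\Delta_T$-harmonic spaces $\mathcal{H}^{k,0}_T$, then builds the finite-dimensional model complex $A^k = \mathcal{H}^{k,0}_T\oplus\mathcal{H}^{k-1,0}_T$ with differential $(\alpha,\beta)\mapsto(\omega\wedge\beta,0)$ and proves $j^*\colon A^*\to L^*$ is a quasi-isomorphism by a degree-by-degree analysis of the harmonicity equations (using $D_V^*=-D_V$ and integration by parts), finishing with an Euler-characteristic count to upgrade surjectivity to bijectivity in degree $2$. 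You instead lift the torus action (legitimately, via the remark following Proposition \ref{proptorus}, which does not require irreducibility), show that $\Delta$-harmonic elements are $\tilde T$-invariant because $\mathcal{L}_{\tilde\xi}\alpha = D(i_\xi\alpha)$ is simultaneously exact and harmonic, conclude by averaging that the invariant subcomplex is quasi-isomorphic to $(L^*,D)$, and then recognise that subcomplex as the mapping cone of $\omega\wedge$ on the basic complex at the cochain level. Your argument is more formal and uniform: it dispenses with the $\Delta_T$ computations and the parity trick in degree $2$, and the SD splitting falls out as the cone of the zero map. What the paper's route buys in exchange is machinery that is reused later: the ellipticity of $\Delta_T$, the identification $H^k_B(\mathfrak{g}_P)\simeq\mathcal{H}^{k,0}_T$, and the operators $G_T$, $\delta_T$ are all essential in the proof of Proposition \ref{propunob}, so the paper's harmonic-theoretic setup is not wasted effort even though it makes the proof of this particular proposition more laborious. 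The only points in your write-up worth tightening are notational: in $(L^k)^{\tilde T}=\Omega^k_B\oplus\eta\wedge\Omega^{k-1}_B$ the symbols $\Omega^2_B$ and $\Omega^3_B$ must be read as $\Omega^+_B$ and $0$ respectively (the terms of the basic deformation complex (\ref{basicdc}), not all basic forms), and one should record explicitly that $\tilde T$-invariance of a smooth transverse form is equivalent to $\mathcal{L}_{\tilde\xi}$-invariance because the one-parameter subgroup is dense in $\tilde T$ and the action is continuous.
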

\begin{proof}
We begin with the ASD case. Let $B( \, , \, )$ be an invariant metric on $\mathfrak{g}$ and use this to define an inner product $\langle \, , \, \rangle$ on $\Omega^*(M,\mathfrak{g}_P)$ by $\langle a,b \rangle = \int_M B( a , *b)$. Let $*_T \colon  \Omega^k_H(M) \to \Omega^{4-k}_H(M)$ denote the transverse Hodge star, which is related to the Hodge star on $M$ by $*_T b = (-1)^k i_\xi (*b)$, for $b \in \Omega^k_H(M)$. Therefore if $a,b \in \Omega^k_H(M,\mathfrak{g}_P)$, we have $\langle a , b \rangle = \int_M B(a , *_T b) \wedge \eta$. Let $D^*,D_V^*,D_T^*$ denote the formal adjoints of $D,D_V,D_T$ with respect to $\langle \, , \, \rangle$. We also let $\Lambda \colon  \Omega^{+}_H(M,\mathfrak{g}_P) \to \Omega^0_H(M,\mathfrak{g}_P)$ denote the adjoint of the operator $L = \omega \wedge  \colon  \Omega^0_H(M,\mathfrak{g}_P) \to \Omega^+_H(M,\mathfrak{g}_P)$. From the identity
\begin{equation*}
0 = \int_M \mathcal{L}_\xi B(a,*_T b) \wedge \eta = \int_M B( D_V a , *_T b) \wedge \eta + \int_M B(a , D_V *_T b) \wedge \eta 
\end{equation*}
we obtain $D_V^*(b) = (-1)^{k-1} *_T D_V( *_T b)$, where $b$ has degree $k$. Since $\xi$ is a Killing vector we see that $D_V$ and $*_T$ commute, giving $D_V^* = -D_V$. Taking the adjoint of the relation $D_T D_V = D_V D_T$ we obtain $D_V D_T^* = D_T^* D_V$.\\

Let $\Delta = DD^* + D^*D \colon  L^* \to L^*$ be the Laplacian associated to $D$. Further define $\Delta_T = D_T D_T^* + D_T^* D_T - D_V^2$. We have that $\Delta$ is elliptic and it follows that $\Delta_T$ is also elliptic since it has the same symbol as $\Delta$. We let $\mathcal{H}^k$ denote the kernel of $\Delta$ on $L^k$ and $\mathcal{H}_T^k$ the kernel of $\Delta_T$ on $L^k$. Since $\Delta_T$ respects the bi-grading we can further decompose the kernel of $\Delta_T$ into spaces $\mathcal{H}_T^{p,q}$. Taking the wedge product with $\eta$ gives an isomorphism $\mathcal{H}_T^{k,0} \simeq \mathcal{H}_T^{k,1}$. \\

From Hodge theory we have isomorphisms $H^k(\mathfrak{g}_P) \simeq \mathcal{H}^k$. We claim that similarly there are isomorphisms $H^k_B(\mathfrak{g}_P) \simeq \mathcal{H}_T^{k,0}$. Clearly an element $\alpha \in L^{k,0}$ is $\Delta_T$-harmonic if and only if $D_T \alpha = D_T^* \alpha = D_V \alpha = 0$. Thus $\alpha$ is basic and $D_B \alpha = 0$, so there is a natural map $f \colon \mathcal{H}_T^{k,0} \to H^k_B(\mathfrak{g}_P)$. Suppose that $f \alpha = 0$. Thus $\alpha = D_T \beta$, where $D_V \beta = 0$. Then $0 = D_T^* \alpha = D_T^* D_T \beta$, giving $\beta = 0$. Hence $f$ is injective. Now let $\alpha$ be closed and basic, that is $\alpha \in L^{k,0}$ with $D_T \alpha = D_V \alpha = 0$. Since $\Delta_T$ is elliptic, Hodge theory implies that there exists $\beta,\gamma \in L^{k,0}$ such that 
\begin{equation}\label{abg}
\alpha = \beta + \Delta_T \gamma
\end{equation}
with $\Delta_T \beta = 0$. Applying $D_V$ to (\ref{abg}) we obtain $\Delta_T D_V \gamma = 0$, hence in particular $D_V^2 \gamma = 0$ and thus $D_V \gamma = 0$. Applying $D_T$ to (\ref{abg}) we find $D_T D_T^* D_T \gamma = 0$. Taking the inner product with $D_T \gamma$, we find $D_T^* D_T \gamma = 0$ and thus (\ref{abg}) gives $\alpha = \beta + D_T D_T^* \gamma$. This shows that every cohomology class in $H_B^k(\mathfrak{g}_P)$ has a $\Delta_T$-harmonic representative, showing that $f$ is surjective. This proves the claim that $H_B^k(\mathfrak{g}_P) \simeq \mathcal{H}_T^{k,0}$.\\

Let $(A^*,d)$ be the complex with $A^k = \mathcal{H}^{k,0}_T \oplus \mathcal{H}^{k-1,0}_T$ and differential $d( \alpha , \beta ) = ( \omega \wedge \beta , 0)$. Note that this is a well-defined differential because either $\omega \wedge \beta = 0$ or $\beta \in \mathcal{H}^{0,0}_T$ in which case we clearly have $\omega \wedge \beta \in \mathcal{H}^{2,0}_T$. Let $j^* \colon (A^* , d) \to (L^* , D)$ be the chain map given by $j^*( \alpha , \beta ) = \alpha + \eta \wedge \beta$. To prove exactness of the sequence (\ref{exseqharm}) it will suffice to show that $j^*$ is a quasi-isomorphism. We must show that $j^k \colon H^k(A^*) \to H^k(L^*)$ is an isomorphism for $0 \le k \le 3$. The case $k=0$ is trivial. For the case $k=3$ it suffices to note that $\mathcal{H}^3 = \{ \alpha \in \Omega^+_H(M,\mathfrak{g}_P) \, | \, D_T^* \alpha = 0, \, D_V \alpha = 0 \} = \mathcal{H}_T^{2,0}$. The cases $k=1,2$ require more care. First consider a class $[\alpha] \in H^k(\mathfrak{g}_P)$, with $\alpha = \beta + \eta \wedge \gamma$ the $\Delta$-harmonic representative. Then $D \alpha = D^* \alpha = 0$ is equivalent to
\begin{equation*}
\begin{aligned}
D_V \beta - D_T \gamma &= 0 &D_T^* \beta - D_V \gamma &= 0\\
D_T \beta + \omega \gamma &= 0 & \Lambda \beta - D_T^* \gamma &= 0.
\end{aligned}
\end{equation*}
From this we find $D_T D_T^* \beta - D_V^2 \beta = D_T D_V \gamma - D_V D_T \gamma = 0$. Taking the inner product with $\beta$ we find $D_T^* \beta = D_V \beta = 0$. Thus also $D_T \gamma = 0$ and $D_V \gamma = 0$. If $k=2$, the $\omega \gamma$ term is zero, hence also $D_T \beta = 0$. This shows that every class $[\alpha] \in H^2(\mathfrak{g}_P)$ can be written as $\alpha = \beta + \eta \wedge \gamma$, where $\beta \in \mathcal{H}^{2,0}_T$, $\gamma \in \mathcal{H}^{1,0}_T$. Therefore $j^2 \colon H^2(A^*) \to H^2(L^*)$ is surjective. Now consider the case $k=1$. The term $\Lambda \beta$ vanishes, hence $D_T^* \gamma = 0$. Further, applying $D_T^*$ to $D_T \beta + \omega \gamma = 0$, we find
\begin{align*}
0 &= D_T^* D_T \beta + D_T^* (\omega \gamma) \\
&= D_T^* D_T \beta - *_T d_T  (\omega \gamma) \\
& = D_T^* D_T \beta - *_T  (\omega \wedge d_T \gamma )\\
& = D_T^* D_T \beta
\end{align*}
where we have used the fact that $D_T^* \colon \Omega^+_H(M,\mathfrak{g}_P) \to \Omega^1(M,\mathfrak{g}_P)$ is given by $-*_T d_T$. Taking the inner product with $\beta$ we have $D_T \beta = 0$, hence also $\omega \gamma = 0$. We have shown that $\beta,\gamma$ are $\Delta_T$-harmonic and that $\omega \gamma = 0$. This establishes that $j^1 \colon H^1(A^*) \to H^1(L^*)$ is an isomorphism.\\

It remains only to show that $j^2 \colon H^2(A^*) \to H^2(L^*)$ is an isomorphism. Since we have shown it is surjective it will suffice to show that $dim(H^2(A^*)) = dim(H^2(L^*))$. In fact we have $0 = \sum_k (-1)^k dim(H^k(L^*))$, since $(L^*,D)$ is an elliptic complex on a compact manifold of odd dimension. Similarly it is clear that $\sum_k (-1)^k dim(H^k(A^*)) \linebreak = 0$, hence $dim(H^2(A^*)) = dim(H^2(L^*))$ as claimed. This proves the proposition in the anti-self-dual case. The self-dual case is similar, but considerably easier since now we have an equality $\Delta = \Delta_T$. From this we have $\mathcal{H}^k = \mathcal{H}_T^{k,0} \oplus \mathcal{H}_T^{k-1,1}$ and thus $H^k(\mathfrak{g}_P) = H^k_B(\mathfrak{g}_P) \oplus H^{k-1}_B(\mathfrak{g}_P)$.
\end{proof}

\begin{proposition}
Let $\nabla$ be an ASD contact instanton. The map $\omega \colon  H^0_B(\mathfrak{g}_P) \to H^2_B(\mathfrak{g}_P)$ is injective.
\end{proposition}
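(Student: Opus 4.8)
The plan is to exploit the harmonic description of the basic cohomology groups established in the proof of Proposition~\ref{propcohom}, where it is shown that $H^k_B(\mathfrak{g}_P)\simeq\mathcal{H}_T^{k,0}$, the space of $\alpha\in L^{k,0}$ with $D_T\alpha=D_T^*\alpha=D_V\alpha=0$. In degree zero this identifies $H^0_B(\mathfrak{g}_P)$ with the space of $\nabla$-parallel sections, $\{\psi\in\Omega^0(M,\mathfrak{g}_P):d_\nabla\psi=0\}$. First I would fix such a $\psi$ and show that $\omega\wedge\psi$ — which is a self-dual transverse $2$-form with values in $\mathfrak{g}_P$ — again lies in $\mathcal{H}_T^{2,0}$, so that it is the harmonic representative of $\omega\wedge[\psi]\in H^2_B(\mathfrak{g}_P)$. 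Granting this, injectivity of $\omega$ on cohomology reduces to injectivity of the map $\mathcal{H}_T^{0,0}\to\mathcal{H}_T^{2,0}$, $\psi\mapsto\omega\wedge\psi$.

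The harmonicity of $\omega\wedge\psi$ comes out of two short observations. Since $d\omega=0$ and $d_\nabla\psi=0$ we get $d_\nabla(\omega\wedge\psi)=\omega\wedge d_\nabla\psi=0$; hence on the one hand $D_V(\omega\wedge\psi)=i_\xi d_\nabla(\omega\wedge\psi)=0$ (so $\omega\wedge\psi$ is basic), and on the other hand $d_T(\omega\wedge\psi)=d_\nabla(\omega\wedge\psi)-\eta\wedge i_\xi d_\nabla(\omega\wedge\psi)=0$, so the formula $D_T^*=-*_T d_T$ on $\Omega^+_H(M,\mathfrak{g}_P)$ recalled in the proof of Proposition~\ref{propcohom} gives $D_T^*(\omega\wedge\psi)=0$. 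The remaining condition $D_T(\omega\wedge\psi)=0$ is automatic because $L^{3,0}=0$ by (\ref{dgla}). Finally, $\psi\mapsto\omega\wedge\psi$ is injective even pointwise: $\omega$ is self-dual and nowhere zero with $\omega\wedge\omega=2\,\mathrm{vol}_H$, so $|\omega\wedge\psi|^2=2|\psi|^2$ at every point (equivalently $\Lambda(\omega\wedge\psi)=2\psi$), whence $\omega\wedge\psi=0$ forces $\psi=0$, completing the argument.

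If one wishes to bypass the harmonic representatives, the same computation applies directly: writing $\omega\wedge\psi=D_B\beta$ for some $\beta\in\Omega^1_B(M,\mathfrak{g}_P)$, one has $\|\omega\wedge\psi\|^2=\langle D_B\beta,\omega\wedge\psi\rangle=\langle\beta,D_B^*(\omega\wedge\psi)\rangle=0$ since $D_B^*(\omega\wedge\psi)=D_T^*(\omega\wedge\psi)=0$ as above, so $\omega\wedge\psi=0$ and $\psi=0$. The only point that requires any care is the vanishing $D_T^*(\omega\wedge\psi)=0$; it rests on $\omega$ being closed (so that $d_\nabla(\omega\wedge\psi)=\omega\wedge d_\nabla\psi$) and on $\psi$ being $\nabla$-parallel, while everything else is formal. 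I expect no genuine obstacle here: the content of the statement is just that wedging with the closed transverse Kähler form sends $\nabla$-parallel sections of $\mathfrak{g}_P$ to basic self-dual harmonic forms injectively.
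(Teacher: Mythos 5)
Your argument is correct and is essentially the paper's proof: the paper also writes $a\omega = D_B b$ and shows $\|a\|^2 = \tfrac12\int_M B(D_T b, a\omega)\wedge\eta = 0$ by Stokes' theorem, which is exactly your ``bypass'' computation $\|\omega\wedge\psi\|^2 = \langle\beta, D_T^*(\omega\wedge\psi)\rangle = 0$ phrased via formal adjoints, resting on the same two facts ($\omega$ closed, $\psi$ parallel, hence $d_T(\omega\wedge\psi)=0$) and on pointwise injectivity of wedging with $\omega$. Your primary route through harmonic representatives is only a mild repackaging of this, so no substantive difference.
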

\begin{proof}
Let $a \in H^0_B(\mathfrak{g}_P)$ be such that $[\omega \otimes a] = 0 \in H^2_B(\mathfrak{g}_P)$, so $a \otimes \omega = D_B b$ for some $b \in \Omega^1_B(M,\mathfrak{g}_P)$. Then
\begin{align*}
||a||^2 &= \frac{1}{2} \int_M B( a \omega , a \omega ) \wedge \eta \\
&= \frac{1}{2} \int_M B( D_T b , a \omega ) \wedge \eta \\
&=\frac{1}{2} \int_M d (  B( b , a \omega) \wedge \eta ) \\
&= 0.
\end{align*}
Thus $a = 0$, proving injectivity.
\end{proof}

\begin{corollary}\label{corbasic}
Let $\nabla$ be any ASD contact instanton or an irreducible SD contact instanton. We have an isomorphism $H^1_B(\mathfrak{g}_P) \simeq H^1(\mathfrak{g}_P)$ induced by the inclusion $L^k_B \to L^k$.
\end{corollary}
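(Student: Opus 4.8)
The plan is to deduce both statements directly from Proposition \ref{propcohom} together with the injectivity of $\omega \colon H^0_B(\mathfrak{g}_P) \to H^2_B(\mathfrak{g}_P)$ just established. The preliminary observation is that, under the Hodge-theoretic identifications used in the proof of Proposition \ref{propcohom} (namely $H^k_B(\mathfrak{g}_P) \simeq \mathcal{H}_T^{k,0}$, with the chain map $j^*(\alpha,\beta) = \alpha + \eta \wedge \beta$ realizing the long exact sequence in the ASD case and the splitting in the SD case), the map $H^1_B(\mathfrak{g}_P) \to H^1(\mathfrak{g}_P)$ appearing in those results is precisely the one induced by the inclusion $L^*_B \hookrightarrow L^*$. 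So in each case it is enough to show this map is an isomorphism.

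For an arbitrary ASD contact instanton I would extract from the long exact sequence (\ref{exseqharm}) the segment
\[ H^{-1}_B(\mathfrak{g}_P) \buildrel \omega\wedge \over \longrightarrow H^1_B(\mathfrak{g}_P) \longrightarrow H^1(\mathfrak{g}_P) \longrightarrow H^0_B(\mathfrak{g}_P) \buildrel \omega\wedge \over \longrightarrow H^2_B(\mathfrak{g}_P). \]
Since $H^{-1}_B(\mathfrak{g}_P) = 0$, exactness at $H^1_B(\mathfrak{g}_P)$ gives injectivity of $H^1_B(\mathfrak{g}_P) \to H^1(\mathfrak{g}_P)$. For surjectivity, the preceding proposition makes the rightmost arrow injective, so exactness at $H^0_B(\mathfrak{g}_P)$ forces the connecting map $H^1(\mathfrak{g}_P) \to H^0_B(\mathfrak{g}_P)$ to vanish, and then exactness at $H^1(\mathfrak{g}_P)$ shows $H^1_B(\mathfrak{g}_P) \to H^1(\mathfrak{g}_P)$ is onto. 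Hence it is an isomorphism.

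For an irreducible SD contact instanton, Proposition \ref{propcohom} gives $H^1(\mathfrak{g}_P) \simeq H^1_B(\mathfrak{g}_P) \oplus H^0_B(\mathfrak{g}_P)$ with the first summand the image of the inclusion-induced map, so I only need $H^0_B(\mathfrak{g}_P) = 0$. Here I would note that a basic section $\sigma \in \Omega^0_B(M,\mathfrak{g}_P)$ with $D_B\sigma = 0$ satisfies $D_V\sigma = 0$ and $D_T\sigma = 0$, hence $D\sigma = 0$, while conversely any $D$-parallel section is automatically basic; thus $H^0_B(\mathfrak{g}_P) = H^0(\mathfrak{g}_P)$, the Lie algebra of infinitesimal automorphisms of $\nabla$, which vanishes because $\nabla$ is irreducible. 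Therefore the inclusion induces an isomorphism $H^1_B(\mathfrak{g}_P) \simeq H^1(\mathfrak{g}_P)$ in this case as well.

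I do not anticipate a real obstacle: once Proposition \ref{propcohom} and the $\omega$-injectivity are in hand this is a short diagram chase. The only points deserving a line of justification are (i) that the abstract isomorphisms of Proposition \ref{propcohom} are genuinely the maps induced by $L^*_B \hookrightarrow L^*$, which is immediate from the explicit chain map $j^*$ used there, and (ii) in the SD case, that irreducibility is exactly what kills $H^0(\mathfrak{g}_P)$ and hence $H^0_B(\mathfrak{g}_P)$.
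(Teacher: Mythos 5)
Your proof is correct and is precisely the argument the paper intends: the paper states the corollary without proof immediately after establishing injectivity of $\omega \colon H^0_B(\mathfrak{g}_P) \to H^2_B(\mathfrak{g}_P)$, so that the ASD case is the diagram chase you give in the segment $0 \to H^1_B(\mathfrak{g}_P) \to H^1(\mathfrak{g}_P) \to H^0_B(\mathfrak{g}_P) \to H^2_B(\mathfrak{g}_P)$ of the sequence (\ref{exseqharm}), and the SD case follows from the splitting $H^1(\mathfrak{g}_P) \simeq H^1_B(\mathfrak{g}_P) \oplus H^0_B(\mathfrak{g}_P)$ together with the vanishing of $H^0_B(\mathfrak{g}_P)$ for irreducible $\nabla$. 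Your two justificatory remarks (that the maps in Proposition \ref{propcohom} are induced by $j^*$, hence by the inclusion on the basic summand, and that irreducibility kills $H^0(\mathfrak{g}_P) = H^0_B(\mathfrak{g}_P)$) are exactly the right points to flag.
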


%%%%%%%%%%%%%%%%%%%%%%%%%%%%%%%%%%%%%%%%%%%%%%%%%%%%%%%%%%%%%%%%%%

\subsection{The moduli space of contact instantons}\label{secmodulispace}

In this section we proceed to construct the moduli space of contact instantons and use an obstruction map to give a local description of this space. Our construction is modelled on the construction of the moduli space of instantons on a $4$-manifold as in \cite{ahs},\cite{dk}, which in turn are based on the Kuranishi approach to deformation theory. To simplify the presentation we will give the construction for ASD contact instantons. The SD case works identically.\\

Throughout we assume that $G$ is a compact, connected, semisimple Lie group with Lie algebra $\mathfrak{g}$. Fix a principal $G$-bundle $\pi \colon  P \to M$ and as usual let $\mathfrak{g}_P$ denote the adjoint bundle. To construct the moduli space of contact instantons on $P$ we introduce $L^2_k$-Sobolev norms, construct a moduli space of $L^2_k$-contact instantons for sufficiently large $k$ and argue that the moduli space so defined does not depend on the choice of $k$. To keep the notation simple we will hide the dependence on the underlying principal bundle $P$. Let $\mathcal{A}_k$ be the space of $L^2_k$-connections on $P$ and $\mathcal{G}_{k+1}$ the space of $L^2_{k+1}$-gauge transformations. We take $k$ large enough that Sobolev embedding holds. Then one shows as in \cite{dk} that for large enough $k$, $\mathcal{G}_{k+1}$ is a Hilbert Lie group acting smoothly on $\mathcal{A}_k$ and that the quotient $\mathcal{B}_k = \mathcal{A}_k / \mathcal{G}_{k+1}$ is Hausdorff in the quotient topology. We define the moduli space of ASD $L^2_k$-contact instantons to be the subspace $\mathcal{M}_k \subset \mathcal{B}_k$ of gauge equivalence classes of connections satisfying the contact instanton equation. By this definition, $\mathcal{M}_k$ is a Hausdorff topological space. We also let $\mathcal{A}_k^* \subseteq \mathcal{A}_k$ denote the subspace of irreducible $L^2_k$-connections and similarly define $\mathcal{B}_k^*$, $\mathcal{M}_k^*$.\\

Having defined $\mathcal{M}_k$ the next step is to give a local description of its topology. For this we turn to the standard deformation theory of the Maurer-Cartan equation. Let $\nabla$ be a contact instanton. Recall that $\nabla$ defines a deformation complex $(L^* , D)$, a differential graded Lie algebra and that contact instantons on $P$ correspond to Maurer-Cartan elements of $(L^*,D)$. From (\ref{dgla}) the spaces $L^m$ are smooth sections of vector bundles on $M$. We let $L^m_k$ denote the completion of $L^m$ in the $L^2_k$-Sobolev norm. Let $D^* \colon  L^m_k \to L^{m-1}_{k-1}$ be the formal adjoint of $D$, $\Delta = DD^* + D^*D \colon  L^m_k \to L^m_{k-2}$ the associated Laplacian, $H\colon  L^m_k \to L^m_{k}$ the projection to the $L^2$-orthogonal complement of $Ker(\Delta)$ and $G \colon  L^m_k \to L^m_{k+2}$ the Green's operator. We also set $\delta = D^*G$.\\

Let $F \colon  L^1_k \to L^1_k$ be the map $F(\alpha) = \alpha + \frac{1}{2} \delta [\alpha , \alpha]$. For large enough $k$, $F$ is a smooth map of a Hilbert space to itself. The derivative of $F$ at the origin is the identity, so in a neighbourhood of $0 \in L^1_k$ we have a smooth inverse map $F^{-1}$. Given $c > 0$ set $U_c = \{ \eta \in L^1_k \, | \, \Delta \eta = 0, \,  || \eta ||_k < c \}$. Then for small enough $c$ we have defined $F^{-1} \colon  U_c \to L^1_k$. Given $\eta \in U_c$, set $\alpha = F^{-1}(\eta)$. We claim that $\alpha$ is smooth. In fact, since $\eta = \alpha + \frac{1}{2} \delta [\alpha , \alpha]$, applying $\Delta$ gives $\Delta \alpha + \frac{1}{2} D^* [ \alpha , \alpha ] = 0$. Then $\alpha$ is smooth by elliptic regularity.

Let $\mathcal{H}^m(\mathfrak{g}_P)$ denote the space of $\Delta$-harmonic forms in $L^m$. Then $U_c$ is a neighbourhood of $0$ in $\mathcal{H}^1(\mathfrak{g}_P)$. Define $\Phi \colon  U_c \to \mathcal{H}^2(\mathfrak{g}_P)$ by setting $\Phi(\eta) = H [ F^{-1}(\eta) , F^{-1}(\eta)]$. We call $\Phi$ the {\em obstruction map} for the deformation complex $(L^*,D)$. Suppose that $\Phi(\eta) = 0$. Set $\alpha = F^{-1}(\eta)$, so that $\eta = \alpha + \frac{1}{2} \delta [\alpha , \alpha]$. Then $0 = D\eta = D\alpha + \frac{1}{2} D\delta [\alpha , \alpha]$. Next, we use the identity $D \delta = 1 - H - \delta D$ to obtain
\begin{equation}\label{almostmc}
D\alpha + \frac{1}{2}[\alpha , \alpha] - \delta [D\alpha , \alpha] = 0.
\end{equation}
We claim that $\delta [D\alpha , \alpha ] = 0$, provided $c$ is sufficiently small. In fact, from (\ref{almostmc}) we have
\begin{align*}
\delta[D\alpha , \alpha ] &= \delta[ -\frac{1}{2}[\alpha,\alpha] + \delta[D\alpha,\alpha] , \alpha] \\
&= \delta[ \delta[D\alpha , \alpha ] , \alpha ].
\end{align*}
Set $x = \delta[D\alpha , \alpha]$, so that $x = \delta[ x , \alpha]$. For large enough $k$ we obtain an estimate of the form $|| \delta[a,b] ||_k \le A ||a||_k ||b||_k$ for some constant $A > 0$. Hence we have $||x||_k \le A ||x||_k ||\alpha||_k$. For all sufficiently small $c$ we can assume $||\alpha ||_k < 1/A$, giving $x = \delta[D\alpha,\alpha] = 0$. Now (\ref{almostmc}) shows that $\alpha = F^{-1}(\eta)$ is a solution to the Maurer-Cartan equation. Moreover, $0 = D^* \eta = D^* \alpha + \frac{1}{2} D^* \delta [\alpha , \alpha] = D^* \alpha$, that is $D^* \alpha = 0$.\\

Let $Z = \{ \alpha \in L^1_k \, | \, D^*\alpha = 0, \, D\alpha + \frac{1}{2} [\alpha , \alpha] = 0 \}$. We have shown that $F^{-1}$ sends $\Phi^{-1}(0)$ into $Z$. Next we claim that all sufficiently small $\alpha \in Z$ are obtained this way. Given $\alpha \in Z$, set $\eta = F(\alpha) = \alpha + \frac{1}{2} \delta [\alpha , \alpha]$. Then clearly $D^* \eta = 0$. Also we find
\begin{align*}
D\eta &= D\alpha + \frac{1}{2} D\delta [\alpha , \alpha]\\
&= -\frac{1}{2}[\alpha , \alpha] + \frac{1}{2}( 1-H-\delta D)[\alpha , \alpha] \\
&= -\frac{1}{2} H[\alpha,\alpha] - \delta[ D\alpha , \alpha] \\
&= -\frac{1}{2} H[\alpha , \alpha].
\end{align*}
Thus $D\eta = -\frac{1}{2}H[\alpha,\alpha] = -\Phi(\eta)$. The left hand side is $D$-exact, while the right hand side is harmonic, hence we must have $D\eta = 0$, $\Phi(\eta) = 0$.\\

This shows that $F^{-1}$ sends $\Phi^{-1}(0)$ to a neighbourhood $W$ of $0 \in Z$. Let $Aut(\nabla)$ be the group of covariantly constant gauge transformations of $\nabla$ and set $\Gamma_\nabla = Aut(G)/ Z(G)$, where $Z(G)$ is the centre of $G$. In particular $\Gamma_\nabla = 1$ if and only if $\nabla$ is irreducible. As shown in \cite{dk}, there is a neighbourhood of $\nabla$ in $\mathcal{M}_k$ given by $W/\Gamma$. We have thus shown:
\begin{proposition}\label{proplocstruct}
For $c>0$, let $U_c = \{ \eta \in \mathcal{H}^1(\mathfrak{g}_P) | \, || \eta ||_k < c \}$. Choose $c$ sufficiently small so that the obstruction map $\Phi \colon U_c \to \mathcal{H}^2(\mathfrak{g}_P)$ is defined. Set $\Gamma_\nabla = Aut(G)/Z(G)$. For sufficiently small $c$ there is a neighbourhood of $\nabla$ in $\mathcal{M}_k$ given by $\Phi^{-1}(0)/\Gamma_\nabla$.
\end{proposition}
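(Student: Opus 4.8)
The plan is to combine the Kuranishi-type reduction already completed above with the standard local slice theorem for the action of the gauge group $\mathcal{G}_{k+1}$ on $\mathcal{A}_k$. Almost all of the analytic content is in place: it has been shown that $F^{-1}$ restricts to a homeomorphism from $\Phi^{-1}(0)$ onto a neighbourhood $W$ of $0$ in $Z = \{\alpha \in L^1_k : D^*\alpha = 0,\ D\alpha + \frac{1}{2}[\alpha,\alpha] = 0\}$. What remains is to identify $W/\Gamma_\nabla$ with a neighbourhood of $\nabla$ in $\mathcal{M}_k$ and to check that the homeomorphism $F^{-1}$ is $\Gamma_\nabla$-equivariant.

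First I would invoke the Coulomb-gauge slice theorem exactly as in \cite{dk}. The degree-zero part of the deformation complex is $D = d_\nabla \colon L^0 = \Omega^0(M,\mathfrak{g}_P) \to L^1 = \Omega^1(M,\mathfrak{g}_P)$, the infinitesimal gauge action, and $(L^*,D)$ is elliptic; hence $d_\nabla^* d_\nabla$ is an elliptic operator whose kernel on $\Omega^0$ is $H^0(\mathfrak{g}_P)$, the Lie algebra of $Aut(\nabla)$. The usual implicit-function-theorem argument then shows that for $c$ small the slice $S_c = \{\alpha \in L^1_k : D^*\alpha = 0,\ \|\alpha\|_k < c\}$ surjects onto a neighbourhood of $[\nabla]$ in $\mathcal{B}_k$, that $Aut(\nabla)$, identified via a basepoint with a closed subgroup of $G$ containing $Z(G)$, acts linearly on $S_c$ preserving the $L^2_k$-norm, and that two points of $S_c$ are gauge equivalent iff they lie in the same $Aut(\nabla)$-orbit. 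Since constant $Z(G)$-valued gauge transformations act trivially on connections, the resulting neighbourhood of $[\nabla]$ in $\mathcal{B}_k$ is $S_c/\Gamma_\nabla$.

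Next I would intersect with the instanton locus. By the characterisation established earlier — that $\nabla + \psi$ is an ASD contact instanton precisely when $\psi$ is a Maurer-Cartan element of $(L^*,D)$ — a point $\nabla + \alpha$ with $\alpha \in S_c$ represents a class in $\mathcal{M}_k$ if and only if $\alpha \in Z$. Hence $(S_c \cap Z)/\Gamma_\nabla$ is a neighbourhood of $\nabla$ in $\mathcal{M}_k$, and after possibly shrinking $c$ this agrees with $W/\Gamma_\nabla$ for the set $W$ produced above. It then remains to observe that $F$, and hence $F^{-1}$ and $\Phi$, are $Aut(\nabla)$-equivariant: an element of $Aut(\nabla)$ acts on each $L^m_k$ by a bundle automorphism commuting with $\nabla$, hence commuting with $D$, $D^*$, $\Delta$, the harmonic projection $H$, the Green operator $G$, $\delta = D^*G$ and the graded bracket. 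Therefore $F^{-1}$ descends to a homeomorphism $\Phi^{-1}(0)/\Gamma_\nabla \to W/\Gamma_\nabla$, and composing with the identification of $W/\Gamma_\nabla$ with a neighbourhood of $\nabla$ in $\mathcal{M}_k$ gives the statement.

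The one step that genuinely requires work is the slice theorem of the second paragraph — verifying that the Coulomb condition $D^*\alpha = 0$ cuts out a true local slice for the $\mathcal{G}_{k+1}$-action with residual stabiliser $\Gamma_\nabla$ in the $L^2_k$ Sobolev setting. This is standard and proceeds verbatim as in \cite{dk}: the only inputs are ellipticity of $d_\nabla^* d_\nabla$ (immediate from ellipticity of $(L^*,D)$), Sobolev multiplication and embedding for $k$ large, and the identification of $Aut(\nabla)$ with a closed subgroup of $G$. None of these are affected by the odd dimension of $M$ or by the transverse nature of the contact instanton equation, precisely because $(L^*,D)$ is a genuinely elliptic complex rather than merely transverse elliptic.
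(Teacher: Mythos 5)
Your proposal is correct and follows essentially the same route as the paper: the Kuranishi reduction via $F$ and $\Phi$ combined with the Coulomb-gauge slice theorem of Donaldson--Kronheimer, which the paper simply cites for the identification of $W/\Gamma_\nabla$ with a neighbourhood of $\nabla$ in $\mathcal{M}_k$. The only addition is your explicit check that $F$, $F^{-1}$ and $\Phi$ are $Aut(\nabla)$-equivariant, a point the paper leaves implicit but which is needed for $F^{-1}$ to descend to the quotient.
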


Next, let us address the issue of the dependence of the moduli space $\mathcal{M}_k$ on $k$. Clearly there is a natural map $\mathcal{M}_{k+1} \to \mathcal{M}_k$. Using the same argument as in \cite{dk}, we have:
\begin{proposition}
For all sufficiently large $k$, the natural map $\mathcal{M}_{k+1} \to \mathcal{M}_k$ is a homeomorphism.
\end{proposition}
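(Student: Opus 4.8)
The plan is to transplant the argument of \cite{dk} to the present setting, checking only the points at which the transverse character of the contact instanton equation intervenes. The map $\mathcal{M}_{k+1} \to \mathcal{M}_k$ is induced by the inclusions $\mathcal{A}_{k+1} \hookrightarrow \mathcal{A}_k$ and $\mathcal{G}_{k+2} \hookrightarrow \mathcal{G}_{k+1}$, so it is automatically well defined and continuous; the content is that it is a bijection with continuous inverse, and this will follow from (i) an elliptic regularity statement and (ii) the $k$-independence of the local Kuranishi model of Proposition \ref{proplocstruct}. I give the argument for ASD contact instantons, the SD case being identical.

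First I would prove the regularity statement: every $L^2_k$ ASD contact instanton $\nabla$ is carried by some $g \in \mathcal{G}_{k+1}$ to a \emph{smooth} contact instanton. Pick a smooth reference connection $\nabla_0$ with $\|\nabla - \nabla_0\|_{L^2_k}$ small and apply the local slice (Coulomb gauge) theorem of \cite{dk}, whose proof rests on the implicit function theorem and standard elliptic theory, to produce $g \in \mathcal{G}_{k+1}$ with $a := g\cdot\nabla - \nabla_0$ satisfying $D_0^* a = 0$. Here $D_0 \colon L^1 \to L^2$ is the first-order operator obtained from $d_{\nabla_0}$ exactly as $D$ was obtained from $d_\nabla$ in \textsection\ref{secdeform}; since $\nabla_0$ need not be an instanton, $(L^*, D_0)$ is not a complex, but $D_0$ has the same symbol as the $D$ attached to a genuine instanton, so $D_0 + D_0^* \colon L^1 \to L^0 \oplus L^2$ is elliptic precisely because the deformation complex (\ref{equellcpx}) is. The instanton equation for $\nabla_0 + a$, combined with $D_0^* a = 0$, then takes the form $(D_0^* + D_0)a = c_0 + q(a)$ with $c_0$ a smooth $(L^0 \oplus L^2)$-valued term built from $F_{\nabla_0}$ and $q$ a pointwise quadratic expression in $a$; once $k$ is large enough that $L^2_k$ is a Banach algebra (in dimension $5$ this holds for $k \ge 3$, which is already in force), the right-hand side lies in $L^2_k$, so the elliptic estimate for $D_0 + D_0^*$ upgrades $a$ to $L^2_{k+1}$, and iterating shows $a$ is smooth. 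Surjectivity of $\mathcal{M}_{k+1} \to \mathcal{M}_k$ is now immediate, since the smooth instanton $\nabla_0 + a$ represents a point of $\mathcal{M}_{k+1}$ mapping to $[\nabla]$. Injectivity is the familiar bootstrap: if $\nabla_1, \nabla_2 \in \mathcal{A}_{k+1}$ are contact instantons with $\nabla_2 = g\cdot\nabla_1$, $g \in \mathcal{G}_{k+1}$, then in a local trivialisation $dg = g A_1 - A_2 g$ with $A_1, A_2 \in L^2_{k+1}$, whence $dg \in L^2_{k+1}$ and so $g \in L^2_{k+2}$.

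For the continuity of the inverse I would use the local models. By the regularity just established, every point of $\mathcal{M}_k$ has a smooth representative $\nabla$, and Proposition \ref{proplocstruct} identifies a neighbourhood of $[\nabla]$ with $\Phi^{-1}(0)/\Gamma_\nabla$ for the obstruction map $\Phi \colon U_c \to \mathcal{H}^2(\mathfrak{g}_P)$ on a ball $U_c \subseteq \mathcal{H}^1(\mathfrak{g}_P)$. The harmonic spaces $\mathcal{H}^1(\mathfrak{g}_P)$, $\mathcal{H}^2(\mathfrak{g}_P)$ are finite-dimensional and consist of smooth sections, so they, and the restrictions to them of all the $L^2_k$-norms, do not depend on $k$; and the operators $F^{-1}$, the harmonic projection $H$ and $\delta = D^* G$ out of which $\Phi$ is assembled are the same for every $k$, the only $k$-dependence being the radius on which $F^{-1}$ is defined. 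Because $F^{-1}(\eta)$ is the unique (hence smooth, by elliptic regularity) solution $\alpha$ of $\eta = \alpha + \tfrac12 \delta[\alpha,\alpha]$, the maps $\Phi$ computed with the $L^2_k$- and $L^2_{k+1}$-completions agree on a common small neighbourhood of $0$. Thus in these charts, near corresponding points, the bijection $\mathcal{M}_{k+1} \to \mathcal{M}_k$ is the identity, so it is a homeomorphism.

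The step I expect to be the real obstacle is the regularity claim: one has to make sure that Coulomb gauge fixing and the bootstrap survive the fact that the contact instanton equation is only transverse elliptic. This is exactly what the ellipticity of (\ref{equellcpx}), proved in \textsection\ref{secdeform}, supplies, provided one is careful to treat $D_0$ at the reference connection as a bare differential operator rather than as the differential of a complex — once that is granted, every estimate used is the standard one from \cite{dk}.
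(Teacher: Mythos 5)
Your proposal is correct and is essentially the paper's own argument: the paper's proof consists entirely of the citation ``Using the same argument as in \cite{dk}'', and what you have written is a faithful expansion of that argument, correctly isolating the one point needing care here — that the Coulomb-gauge bootstrap runs on the (genuinely elliptic, overdetermined) operator $D_0^*+D_0\colon L^1\to L^0\oplus L^2$ coming from the elliptic complex (\ref{equellcpx}), not on the merely transverse-elliptic basic complex.
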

We may now speak of the moduli space $\mathcal{M}$ of contact instantons and the open subspace $\mathcal{M}^* \subseteq \mathcal{M}$ of irreducible contact instantons.
From Proposition \ref{proplocstruct} we obtain:
\begin{corollary}
Let $\nabla \in \mathcal{M}^*$ be an irreducible contact instanton for which the obstruction map $\Phi$ vanishes. Then $\nabla$ has a neighborhood homeomorphic to a Euclidean space. If $\Phi$ vanishes for every $\nabla$ in $\mathcal{M}^*$, then $\mathcal{M}^*$ is a smooth manifold.
\end{corollary}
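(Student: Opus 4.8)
The plan is to deduce the corollary from Proposition~\ref{proplocstruct}, the only real work being to promote the local Kuranishi descriptions to a smooth atlas. For the first assertion I would note that since $\nabla$ is irreducible the group $\Gamma_\nabla$ is trivial, so Proposition~\ref{proplocstruct} exhibits a neighbourhood of $\nabla$ in $\mathcal{M}$ homeomorphic to $\Phi^{-1}(0)$; under the hypothesis that $\Phi$ vanishes on $U_c$ this set is all of $U_c$, an open ball in the finite-dimensional vector space $\mathcal{H}^1(\mathfrak{g}_P)$ (finite-dimensional because $(L^*,D)$ is elliptic on the compact manifold $M$), hence a Euclidean neighbourhood of dimension $\dim H^1(\mathfrak{g}_P)$.

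For the second assertion I would take, for each $\nabla \in \mathcal{M}^*$, the chart $\psi_\nabla \colon U_{c_\nabla} \to \mathcal{M}^*$ defined by $\eta \mapsto [\nabla + F^{-1}(\eta)]$; by the first part each $\psi_\nabla$ is a homeomorphism onto an open subset of $\mathcal{M}^*$ and these opens cover $\mathcal{M}^*$ (which is Hausdorff by the construction of \textsection\ref{secmodulispace}), so what remains is to show that the transition maps $\psi_{\nabla'}^{-1}\circ\psi_\nabla$ are smooth. Here I would follow the four-dimensional model of \cite{ahs},\cite{dk}: if $\nabla + F^{-1}(\eta)$ and $\nabla' + F^{-1}(\eta')$ lie in the same $\mathcal{G}_{k+1}$-orbit, the intertwining gauge transformation $g$ is unique up to $Z(G)$, and I would view the pair $(g,\eta')$ as the solution set in $\mathcal{G}_{k+1}\times U_{c_{\nabla'}}$ of the gauge-fixing equation $(D')^*\big(g\cdot(\nabla+F^{-1}(\eta))-\nabla'\big)=0$ together with the normalization $\eta' = H'\big(g\cdot(\nabla+F^{-1}(\eta))-\nabla'\big)$. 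Checking that the linearization of this system is invertible --- which is exactly where irreducibility of $\nabla'$ is used, via injectivity of $D' \colon L^0 \to L^1$ --- lets one apply the implicit function theorem on the relevant Sobolev completions to see that $(g,\eta')$ depends smoothly on $\eta$, and the elliptic-regularity bootstrap already used in \textsection\ref{secmodulispace} (to show that $F^{-1}(\eta)$ is smooth) handles the apparent loss of one derivative in the gauge-fixing equation. Hence $\eta \mapsto \eta'$ is smooth and $\mathcal{M}^*$ acquires a smooth structure of dimension $\dim H^1(\mathfrak{g}_P)$.

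The main obstacle --- and essentially the only nonroutine point --- is this last step: making the implicit function theorem argument precise on the Sobolev completions, tracking the loss of one derivative and recovering it via elliptic regularity, and checking that the overlap of two charts is an open condition. Everything else is formal once Proposition~\ref{proplocstruct} and the Hodge theory of the deformation complex (Proposition~\ref{propcohom}) are in hand. Since the patching argument runs entirely parallel to the classical anti-self-dual case, I would keep it short and cite \cite{ahs},\cite{dk} for the details, merely emphasising that irreducibility is precisely the hypothesis that makes the transition maps well defined and smooth.
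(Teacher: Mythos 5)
Your proposal is correct and follows essentially the same route as the paper: the first assertion is exactly the specialization of Proposition \ref{proplocstruct} to $\Gamma_\nabla = 1$, and the paper disposes of the smoothness claim by invoking the local universality of the solution spaces $F^{-1}(U_c)$ from \cite{ahs}, which is precisely the gauge-fixing/implicit-function-theorem patching argument you spell out. The only difference is one of detail, not of method.
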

\begin{proof}
Only the last statement about smoothness requires explanation. As in \cite{ahs}, this follows from local universality of the spaces $F^{-1}(U_c)$ of solutions to the Maurer-Cartan equation, which serve as coordinate charts defining a smooth structure on $\mathcal{M}^*$.
\end{proof}

For a contact instanton $\nabla$, we define the transverse index $ind(\nabla)$ of $\nabla$ to be the index of the basic deformation complex (\ref{basicdc}), that is
\begin{equation*}
ind(\nabla) = dim( H^0_B(\mathfrak{g}_P)) - dim( H^1_B(\mathfrak{g}_P)) + dim( H^2_B(\mathfrak{g}_P)).
\end{equation*}
When $\nabla$ is irreducible and $H^2_B(\mathfrak{g}_P) = 0$, we have that $\mathcal{M}$ is smooth around $\nabla$ of dimension $dim(\mathcal{M}) = -ind(\nabla)$. The basic deformation complex is an example of a {\em transverse elliptic complex}.

\begin{remark}
Having constructed the moduli space of contact instantons, it is natural to attempt to compactify these spaces. In fact the problem of compactification has already been investigated by Wang in the case of transverse instantons \cite{wang}. These results can be applied to our moduli spaces opening up the exciting possibility of constructing Donaldson type invariants for contact $5$-manifolds.
\end{remark}

%%%%%%%%%%%%%%%%%%%%%%%%%%%%%%%%%%%%%%%%%%%%%%%%%%%%%%%%%%%%%%%%%%%%%%%%%%%%%

\subsection{Vanishing of obstructions}\label{secvanish}

We seek conditions under which the moduli space $\mathcal{M}^*$ of irreducible contact instantons is smooth. It is clear from Proposition \ref{propcohom} that we can not generally expect $H^2(\mathfrak{g}_P)$ to vanish, for if $\nabla$ is irreducible this would force $H^1(\mathfrak{g}_P)$ to also vanish, giving a $0$-dimensional moduli space. Fortunately we can prove vanishing of the obstruction map under the more reasonable condition that $H^2_B(\mathfrak{g}_P)$ vanishes:
\begin{proposition}\label{propunob}
Let $\nabla$ be an irreducible contact instanton (SD or ASD) for which $H^2_B(\mathfrak{g}_P) = 0$. Then the obstruction map $\Phi$ vanishes.
\end{proposition}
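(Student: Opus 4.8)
The plan is to show that if $H^2_B(\mathfrak{g}_P) = 0$ then the image of the obstruction map $\Phi$ is automatically zero, by tracing through the Kuranishi construction and exploiting the long exact sequence of Proposition \ref{propcohom} together with the injectivity result for $\omega \wedge \colon H^0_B(\mathfrak{g}_P) \to H^2_B(\mathfrak{g}_P)$. The key observation is that although $H^2(\mathfrak{g}_P)$ need not vanish, Proposition \ref{propcohom} tells us (in the ASD case) that it fits into the exact sequence $H^0_B(\mathfrak{g}_P) \buildrel \omega \wedge \over \longrightarrow H^2_B(\mathfrak{g}_P) \to H^2(\mathfrak{g}_P) \to H^1_B(\mathfrak{g}_P) \buildrel \omega \wedge \over \longrightarrow H^3_B(\mathfrak{g}_P)$; when $H^2_B(\mathfrak{g}_P) = 0$ this identifies $H^2(\mathfrak{g}_P)$ with the kernel of $\omega \wedge \colon H^1_B(\mathfrak{g}_P) \to H^3_B(\mathfrak{g}_P)$. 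So the obstruction space $\mathcal{H}^2(\mathfrak{g}_P)$ can be realized as a subspace of $\eta \wedge \mathcal{H}^{1,0}_T$, i.e. every harmonic representative of an obstruction class has the form $\eta \wedge \gamma$ with $\gamma$ basic and $\Delta_T$-harmonic.

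First I would recall that for $\nabla$ irreducible, $\Gamma_\nabla = 1$, and the local model is $\Phi^{-1}(0)$ with $\Phi \colon U_c \to \mathcal{H}^2(\mathfrak{g}_P)$ given by $\Phi(\eta) = H[F^{-1}(\eta), F^{-1}(\eta)]$. Since by Corollary \ref{corbasic} we have $H^1_B(\mathfrak{g}_P) \simeq H^1(\mathfrak{g}_P)$, the space $\mathcal{H}^1(\mathfrak{g}_P)$ of harmonic $1$-forms consists of basic forms: every element $\eta \in U_c$ satisfies $i_\xi \eta = 0$ and $D_V \eta = 0$. The next step is to argue that when all the harmonic $1$-forms are basic, the bracket $[\alpha, \alpha]$ produced in the Kuranishi iteration stays within the basic part (or at least its harmonic projection does). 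More precisely, set $\alpha = F^{-1}(\eta)$; I would show inductively that $\alpha$ is basic, i.e. $i_\xi \alpha = 0$ and $D_V \alpha = 0$. The point is that $\delta = D^*G$ commutes with $i_\xi$ and $D_V$ up to controlled terms (using that $\xi$ is Killing so $D_V$ commutes with $\Delta$ and hence with $H$ and $G$), and $[\,\cdot\,,\cdot\,]$ of basic forms is basic; since $\eta$ is basic, the fixed-point equation $\alpha = \eta - \frac{1}{2}\delta[\alpha,\alpha]$ then forces $\alpha$ basic by uniqueness of the contraction mapping fixed point. Consequently $[\alpha,\alpha]$ is a basic $2$-form in $\Omega^+_H(M,\mathfrak{g}_P)$, lying in $L^{2,0}$.

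Then $\Phi(\eta) = H[\alpha,\alpha]$ is the $\Delta$-harmonic projection of an element of $L^{2,0}$. But I claim this harmonic projection lands in $\mathcal{H}^{2,0}_T$, the basic harmonic part: indeed $H$ is built from $\Delta$, which commutes with $D_V$, so $H$ preserves the condition $D_V(\cdot) = 0$, and it preserves $i_\xi(\cdot) = 0$ since both $D$ and $D^*$ do on the relevant degrees. Hence $\Phi(\eta) \in \mathcal{H}^{2,0}_T \simeq H^2_B(\mathfrak{g}_P) = 0$, so $\Phi \equiv 0$. For the SD case, Proposition \ref{propcohom} gives the even cleaner splitting $H^2(\mathfrak{g}_P) = H^2_B(\mathfrak{g}_P) \oplus H^1_B(\mathfrak{g}_P)$ and $\mathcal{H}^2 = \mathcal{H}^{2,0}_T \oplus \mathcal{H}^{1,1}_T$; the same argument shows $\Phi(\eta) = H[\alpha,\alpha]$ with $[\alpha,\alpha]$ basic of type $(2,0)$, so its harmonic projection lies in $\mathcal{H}^{2,0}_T = H^2_B(\mathfrak{g}_P) = 0$ (here one uses irreducibility to invoke Corollary \ref{corbasic} so that the harmonic $1$-forms are basic).

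The main obstacle I anticipate is the inductive/fixed-point argument that $\alpha = F^{-1}(\eta)$ is basic: one must check carefully that the Green's operator $G$ and hence $\delta = D^*G$ commute with the contraction $i_\xi$ and with $D_V$ on the relevant Sobolev completions. This uses in an essential way that $M$ is $K$-contact (so $\xi$ Killing, $\mathcal{L}_\xi$ commutes with $*$ and with $\Delta$, and $D_V^* = -D_V$), facts assembled in the proof of Proposition \ref{propcohom}. Once that commutation is in hand, the rest is a formal consequence of the Kuranishi setup combined with the cohomological input $\mathcal{H}^{2,0}_T \simeq H^2_B(\mathfrak{g}_P) = 0$.
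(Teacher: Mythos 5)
Your overall strategy (show the Kuranishi fixed point is basic, then observe that when $H^2_B(\mathfrak{g}_P)=0$ the harmonic space $\mathcal{H}^2$ lies in $\eta \wedge \mathcal{H}^{1,0}_T$ and is therefore $L^2$-orthogonal to any horizontal bracket $[\alpha,\alpha]$) is attractive, and the endgame is sound. But the pivotal step --- that $\alpha = F^{-1}(\eta)$ is basic because ``$\delta = D^*G$ commutes with $i_\xi$ and $D_V$ up to controlled terms'' --- fails in the ASD case, which is the main case. It is true that $D_V$ (i.e.\ $\mathcal{L}^\nabla_\xi$) commutes with $\Delta$, $H$ and $G$, since $i_\xi F = 0$ and $\xi$ is Killing. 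The problem is horizontality, not $D_V$-invariance. In the ASD complex the differential $D$ on $L^1$ contains the term $f \mapsto \omega f$ landing in $L^{2,0} = \Omega^+_H(M,\mathfrak{g}_P)$, so its adjoint on $L^{2,0}$ is $D^*\beta = D_T^*\beta + \eta\,\Lambda\beta$, which has a nonzero vertical component whenever $\Lambda\beta \neq 0$. Equivalently, $\Delta = DD^* + D^*D$ does \emph{not} respect the bigrading $L^{p,q}$ (the paper is explicit that only the modified operator $\Delta_T = D_TD_T^* + D_T^*D_T - D_V^2$ does), so neither $G$ nor $D^*G$ preserves the subspace $i_\xi(\cdot) = 0$. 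Hence the fixed point of $\alpha = \eta - \tfrac12\delta[\alpha,\alpha]$ acquires vertical components at the first iteration, $[\alpha,\alpha]$ is no longer in $L^{2,0}$, and the claim $\Phi(\eta) = H[\alpha,\alpha] \in \mathcal{H}^{2,0}_T$ collapses. (In the SD case your argument essentially does go through, precisely because there $\Delta = \Delta_T$ and the $\omega\wedge$ term is projected out; but that is the easy case.)

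The paper's proof repairs exactly this defect: it introduces a second Kuranishi map $F_T(\alpha) = \alpha + \tfrac12 \delta_T[\alpha,\alpha]$ built from $\delta_T = D_T^* G_T$, where $G_T$ is the Green's operator of $\Delta_T$. Since $\Delta_T$ respects the bigrading and commutes with $D_V$, the fixed point $F_T^{-1}(\eta)$ \emph{is} basic, and the hypothesis $H^2_B(\mathfrak{g}_P) = 0$ enters through the identity $D_T\delta_T = 1 + G_T D_V^2$ (no harmonic projection term in degree $2$), which shows that $F_T^{-1}(\eta)$ actually solves the Maurer--Cartan equation and the gauge-fixing condition, i.e.\ lies in $Z$. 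One then concludes $\Phi \circ F \circ F_T^{-1} = 0$ and, since $F\circ F_T^{-1}$ is a local diffeomorphism of $U_c$ with identity differential at $0$, that $\Phi \equiv 0$. If you want to salvage your write-up, you must replace $F$, $G$, $\delta$ by their transverse counterparts in the basicness argument; with the unmodified operators the inductive claim is simply false.
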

\begin{proof}
Consider first the ASD case. We will show that every infinitesimal deformation in $H^1(\mathfrak{g}_P)$ is tangent to a $1$-parameter family of deformations of $\nabla$. We then show that this forces $\Phi$ to vanish. Since $\nabla$ is irreducible Proposition \ref{propcohom} gives $H^1(\mathfrak{g}_P) \simeq H^1_B(\mathfrak{g}_P)$. As in the proof of Proposition \ref{propcohom} we set $\Delta_T = DD^* + D^*D - D_V^2$. Since $\Delta_T$ is elliptic, we have an $L^2$-decomposition into $Ker(\Delta_T)$ and $Ker(\Delta_T)^\perp$. Let $H_T$ be the projection to $Ker(\Delta_T)^\perp$. We define the Green's operator $G_T$ of $\Delta_T$ as the inverse of $\Delta_T$ on $Ker(\Delta_T)^\perp$. Set $\delta_T = D_T^* G_T$ and define $F_T \colon  (L^1_T)_k \to (L^1_T)_k$ by $F_T(\alpha) = \alpha + \frac{1}{2} \delta_T [\alpha , \alpha]$, where $(L^1_T)_k$ is the completion of $L^1_k$ in the $L^2_k$-Sobolev norm. For large enough $k$, $F_T$ is an isomorphism of Hilbert spaces. Let $c>0$ be small enough that $F_T^{-1}$ is defined on $U_c = \{ \eta \in \mathcal{H}^1_T \, | \, || \eta ||_k < c \}$. On $(L^2_T)_{k-1}$ we have $\Delta_T = D_T D_T^* - D_V^2$ and thus on $(L^1_T)_k$ we have
\begin{align*}
D_T \delta_T &= D_T D_T^* G_T \\
&= (\Delta_T + D_V^2) G_T \\
&= 1 - H_T + G_T D_V^2 \\
&= 1 + G_T D_V^2,
\end{align*}
where we have used the fact that $D_V$ and $G_T$ commute and that $H_T = 0$ on $(L^2_{k-1})_T$ since $\mathcal{H}^2_B(\mathfrak{g}_P) = 0$ by assumption. Now let $\eta \in U_c \subseteq \mathcal{H}_T^1$ and let $\alpha = F_T^{-1}(\eta)$, so that
\begin{equation}\label{etal}
\eta = \alpha + \frac{1}{2} \delta_T [\alpha , \alpha].
\end{equation}
Applying $D_V$ to (\ref{etal}) we have $0 = D_V \eta = D_V \alpha + \delta_T [D_V \alpha , \alpha]$. Arguing as in Section \ref{secmodulispace}, we have $\delta_T[D_V\alpha , \alpha] = 0$, provided $c$ is sufficiently small. Thus $D_V \alpha = 0$. Applying $D_T$ to (\ref{etal}) we find $0 = D_T \eta = D_\alpha + \frac{1}{2}[\alpha,\alpha]$, so $\alpha$ is a deformation of $\nabla$. Applying $D_T^*$ to (\ref{etal}) we have $0 = D_T^*\eta = D_T^* \alpha + \frac{1}{2}(D_T^*)^2 G_T [\alpha , \alpha]$. Now as $D_T^2 = -\omega \wedge D_V$, we have $(D_T^*)^2(\beta) = D_V( \Lambda \beta)$. Then it follows that $D_T^* \alpha = 0$.\\

Let $Z = \{ \alpha \in L^1_k \, | \, D^*\alpha = 0, \, D\alpha + \frac{1}{2} [\alpha , \alpha] = 0 \}$. In Section \ref{secmodulispace} we saw that $F$ sends a neighbourhood of $0 \in Z$ to a neighbourhood of $\Phi^{-1}(0)$ in $U_c$. On the other hand we have just seen that $F_T^{-1}$ defines a map $F_T^{-1} \colon  U_c \to Z$. This shows that $\Phi \circ F \circ F_T^{-1} = 0$. However the differential of $F \circ F_T^{-1} \colon  U_c \to U_c$ at $0$ is the identity. This shows that $\Phi = 0$ in a neighbourhood of $0$, or $\Phi = 0$ for all sufficiently small $c$. This completes the proof in the ASD case. The SD case is similar with the only difference being that $(D_T^*)^2 = 0$ in this case.
\end{proof}

\begin{corollary}\label{corfixed}
Let $\nabla$ be an irreducible contact instanton (SD or ASD) for which $H^2_B(\mathfrak{g}_P) = 0$. There is an open neighborhood of $\nabla$ in the moduli space $\mathcal{M}^*$ over which the transverse structure on $P$ remains fixed.
\end{corollary}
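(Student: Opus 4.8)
The statement to prove is Corollary~\ref{corfixed}: if $\nabla$ is an irreducible contact instanton with $H^2_B(\mathfrak{g}_P) = 0$, then the transverse structure on $P$ is locally constant on $\mathcal{M}^*$ near $\nabla$. The key point is that by Proposition~\ref{propunob} the obstruction map $\Phi$ vanishes near $\nabla$, so $\mathcal{M}^*$ is locally modelled on $U_c \subseteq \mathcal{H}^1(\mathfrak{g}_P)$ via $\alpha = F^{-1}(\eta)$. The plan is to show that the nearby solutions $\nabla + \alpha$ produced this way all carry the \emph{same} transverse structure as $\nabla$, namely the one determined by the condition $i_{\tilde\xi} F_{\nabla+\alpha} = 0$ for the horizontal lift; concretely, I want to show $D_V\alpha = i_\xi d_\nabla \alpha = 0$ (and $i_\xi \alpha = 0$), i.e. $\alpha \in L^1_B(M,\mathfrak{g}_P)$, so that $\nabla + \alpha$ is a \emph{basic} deformation of $\nabla$.

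\medskip

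\noindent First I would recall that in the proof of Proposition~\ref{propunob} one constructs, using the transversely elliptic operator $\Delta_T$, a map $F_T^{-1}\colon U_c \to Z$ whose differential at $0$ is the identity, and that along the way one shows precisely that for $\eta \in U_c$ the element $\alpha = F_T^{-1}(\eta)$ satisfies $D_V\alpha = 0$ together with $D_T\alpha + \tfrac12[\alpha,\alpha] = 0$ and $D_T^*\alpha = 0$; in particular such $\alpha$ lies in $L^1_B(M,\mathfrak{g}_P)$ and the deformed connection $\nabla + \alpha$ is again a transverse connection for the \emph{same} foliated structure $\widetilde{\mathcal F}$ on $P$. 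Next I would observe that the two Kuranishi models agree near $0$: since $\Phi \equiv 0$, the ordinary model gives a neighbourhood of $\nabla$ in $\mathcal{M}^*$ as the image of $F^{-1}$ on $U_c$, while the composition $F \circ F_T^{-1}$ has identity differential at $0$, hence is a local diffeomorphism of a neighbourhood of $0$ in $\mathcal{H}^1(\mathfrak{g}_P) = \mathcal{H}^1_T(\mathfrak{g}_P)$ (using Corollary~\ref{corbasic} and the identification $H^1(\mathfrak{g}_P) \simeq H^1_B(\mathfrak{g}_P)$ in the irreducible case). Therefore, shrinking $c$ if necessary, every point of $\mathcal{M}^*$ near $\nabla$ is represented by a connection of the form $\nabla + \alpha$ with $\alpha = F_T^{-1}(\eta)$ for some $\eta \in U_c$, and such $\alpha$ is basic by the previous step.

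\medskip

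\noindent It then remains to draw the topological conclusion: a connection $\nabla' = \nabla + \alpha$ with $\alpha$ basic inherits the transverse structure $\widetilde{\mathcal F}$ of $\nabla$, because $i_{\tilde\xi}(A_\nabla + \alpha) = 0$ and $\mathcal{L}_{\tilde\xi}(A_\nabla + \alpha) = 0$ for the horizontal lift $\tilde\xi$ determined by $\nabla$ (which by Proposition~\ref{proptorus} is also the lift of the Reeb flow that one may attach canonically to $\nabla'$ in the irreducible case). Since the isomorphism class of $(P,\widetilde{\mathcal F})$ as a transverse bundle is a discrete invariant — it does not vary in the connected family $\{\nabla + t\alpha : t \in [0,1]\}$ — the transverse structure is constant on the neighbourhood $\Phi^{-1}(0)/\Gamma_\nabla$ of $\nabla$. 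I would also note that the quotient by $\Gamma_\nabla$ is harmless here since $\nabla$ is irreducible, so $\Gamma_\nabla$ is trivial.

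\medskip

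\noindent The main obstacle I anticipate is the bookkeeping needed to identify the transverse structure attached to a nearby solution $\nabla'$ with the one carried by $\nabla$: a priori $\nabla'$ determines its \emph{own} horizontal lift of $\xi$ and hence its own foliation $\widetilde{\mathcal F}'$ on $P$, and one must check that $\widetilde{\mathcal F}' = \widetilde{\mathcal F}$ (not merely that they are abstractly isomorphic) — this is exactly where $D_V\alpha = i_\xi d_\nabla\alpha = 0$ is used, since it says that the $\nabla$-horizontal lift of $\xi$ remains $\nabla'$-horizontal. Once that identification is in place the rest is formal: the remaining content is just that a locally constant invariant on a locally connected space is constant on connected neighbourhoods.
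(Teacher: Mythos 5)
Your proposal is correct and follows essentially the same route as the paper: the paper's proof simply observes that, by the argument establishing Proposition \ref{propunob}, all nearby contact instantons in the Kuranishi slice are of the form $\nabla + \alpha$ with $\alpha \in \Omega^1_B(M,\mathfrak{g}_P)$, and therefore induce the same transverse structure on $P$. You have merely spelled out the two ingredients the paper leaves implicit — that $F\circ F_T^{-1}$ having identity differential at $0$ makes every nearby solution reachable as $F_T^{-1}(\eta)$ with $D_V\alpha = 0$, and that basicness of $\alpha$ means the $\nabla$-horizontal lift of $\xi$ stays horizontal for $\nabla+\alpha$ — which is exactly the intended content.
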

\begin{proof}
In the proof of Proposition \ref{propunob} we see that all nearby contact instantons are of the form $\nabla + \alpha$, where $\alpha \in \Omega^1_B(M,\mathfrak{g}_P)$. Therefore $\nabla$ and $\nabla + \alpha$ induce the same transverse structure on $P$.
\end{proof}

%%%%%%%%%%%%%%%%%%%%%%%%%%%%%%%%%%%%%%%%%%%%%%%%%%%%%%%%%%%%%%%%%%%%%%%
%%%%%%%%%%%%%%%%%%%%%%%%%%%%%%%%%%%%%%%%%%%%%%%%%%%%%%%%%%%%%%%%%%%%%%%
%%%%%%%%%%%%%%%%%%%%%%%%%%%%%%%%%%%%%%%%%%%%%%%%%%%%%%%%%%%%%%%%%%%%%%%
%%%%%%%%%%%%%%%%%%%%%%%%%%%%%%%%%%%%%%%%%%%%%%%%%%%%%%%%%%%%%%%%%%%%%%%

\section{Contact instantons on Sasaki $5$-manifolds}\label{secsasaki}

Recall that a contact metric structure $(\xi,\eta,\Phi,g)$ on $M$ is {\em Sasakian} if for all vector fields $X,Y$ on $M$ we have $(\nabla_X \Phi)Y = g(X,Y)\xi - \eta(Y)X$, where $\nabla$ is the Levi-Civita connection for $g$. In this case we say that $M$ is a {\em Sasaki manifold}. Note also that Sasaki manifolds are automatically $K$-contact. In this section we take $M$ to be a compact, connected Sasaki $5$-manifold and consider the moduli spaces of contact instantons on $M$.

\subsection{Vanishing theorems}\label{secgvanish}

In \cite{it} a vanishing theorem for instantons on compact K\"ahler $4$-manifolds is proven. An identical argument can be applied to the basic deformation complex (\ref{basicdc}) on a compact Sasaki $5$-manifold giving:

\begin{proposition}\label{propvanish}
Suppose that $M$ is a compact Sasaki $5$-manifold with positive transverse scalar curvature $s_T > 0$ and let $\nabla$ be an irreducible anti-self-dual contact instanton. Then $H^2_B(\mathfrak{g}_P) = 0$. If $s_T = 0$ then every element of $H^2_B(\mathfrak{g}_P)$ is covariantly constant. 
\end{proposition}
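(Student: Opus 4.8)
The plan is to run, transversally, Itoh's Bochner-type argument for anti-self-dual connections over K\"ahler surfaces \cite{it}. First I would replace $H^2_B(\mathfrak{g}_P)$ by its harmonic representatives: as in the proof of Proposition \ref{propcohom}, every class is represented by a unique $\phi \in \Omega^{+}_B(M,\mathfrak{g}_P)$ with $D_T^*\phi = 0$ and $D_V\phi = 0$ (the space $\mathcal{H}_T^{2,0}$ appearing there). Since $M$ is Sasakian, $H$ carries a transverse complex structure, yielding the pointwise decomposition $\wedge^{+}H^*\otimes\mathbb{C} = \mathbb{C}\,\omega \oplus \wedge^{2,0}H^* \oplus \wedge^{0,2}H^*$, so we may write $\phi = f\,\omega + \psi^{2,0} + \psi^{0,2}$ with $f \in \Omega^0_B(M,\mathfrak{g}_P\otimes\mathbb{C})$ and $\psi^{2,0} = \overline{\psi^{0,2}}$ for the real structure on $\mathfrak{g}_P$. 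Anti-self-duality of $F_\nabla$ says precisely that $F_\nabla$ is a primitive transverse $(1,1)$-form, so $\bar\partial_\nabla^2 = 0$ and $(\mathfrak{g}_P\otimes\mathbb{C},\bar\partial_\nabla)$ is a transverse holomorphic bundle.

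Next I would unpack the conditions $D_T^*\phi = 0$, $D_V\phi = 0$ using the transverse K\"ahler identities. Just as in \cite{it}, they separate into pieces whose vanishing forces $\bar\partial_\nabla f = \partial_\nabla f = 0$ and makes $\psi^{0,2}$ (and hence its conjugate $\psi^{2,0}$) a basic $\bar\partial_\nabla$-harmonic $(0,2)$-form valued in $\mathfrak{g}_P\otimes\mathbb{C}$. Since $G$ is semisimple and $\nabla$ is irreducible, $H^0(\mathfrak{g}_P) = 0$, so the parallel section $f$ must vanish. The proposition is thereby reduced to showing: a basic $\bar\partial_\nabla$-harmonic $(0,2)$-form $\psi$ valued in $\mathfrak{g}_P\otimes\mathbb{C}$ is zero when $s_T > 0$ and is covariantly constant when $s_T = 0$.

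For this I would invoke the transverse analogue of the Bochner--Kodaira--Nakano identity $\Delta_{\bar\partial_\nabla} = \Delta_{\partial_\nabla} + \mathcal{R}$ on basic $(0,2)$-forms valued in $\mathfrak{g}_P\otimes\mathbb{C}$. The zeroth-order curvature term $\mathcal{R}$ splits into a contribution of $F_\nabla$ and a contribution of the transverse curvature of $M$ acting on the line bundle $\wedge^{0,2}H^*$. The first vanishes: its $\Lambda F_\nabla$-part is zero since $F_\nabla$ is primitive, and the remaining terms involve $F_\nabla^{2,0}$ and $F_\nabla^{0,2}$, which vanish since $F_\nabla$ is of type $(1,1)$ -- this is the feature special to instantons that Itoh's argument exploits. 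The second reduces, on $\wedge^{0,2}H^*\otimes\mathfrak{g}_P$, to multiplication by $c\,s_T$ for a positive constant $c$, where $s_T$ is the transverse scalar curvature entering the decomposition (\ref{equcurvdecom}). Since $\psi$ has bidegree $(0,2)$ we have $\partial_\nabla^*\psi = 0$ automatically and $\bar\partial_\nabla^*\psi = 0$ by harmonicity, so $\Delta_{\bar\partial_\nabla}\psi = 0$ becomes $\partial_\nabla^*\partial_\nabla\psi + c\,s_T\,\psi = 0$; pairing with $\psi$ and integrating over the compact manifold $M$ gives
\begin{equation*}
0 \;=\; \|\partial_\nabla\psi\|^2 \;+\; c\int_M s_T\,|\psi|^2\,\eta\wedge\omega^2 .
\end{equation*}
If $s_T > 0$ this forces $\psi = 0$, hence $\phi = 0$ and $H^2_B(\mathfrak{g}_P) = 0$; if $s_T = 0$ it forces $\partial_\nabla\psi = 0$, and together with $\bar\partial_\nabla\psi = 0$ this shows that $\psi$ and $\psi^{2,0}$ are covariantly constant, hence so is $\phi$, i.e. every element of $H^2_B(\mathfrak{g}_P)$ is covariantly constant.

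I expect the main obstacle to be establishing the transverse Bochner--Kodaira--Nakano identity in the correct form -- in particular, separating the transverse Ricci/scalar-curvature term from the $F_\nabla$-term within the transverse (rather than the ambient) Hodge theory of $M$, verifying that all of the $F_\nabla$-terms genuinely drop out, and fixing the sign of the $s_T$-coefficient. A secondary, more bookkeeping point is checking that $D_T^*$ and $D_V$ translate into the asserted Dolbeault conditions and that the $\omega$-component of $\phi$ decouples exactly as in the $4$-dimensional K\"ahler case.
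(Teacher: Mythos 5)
Your proposal is correct and is essentially the argument the paper intends: the paper's own ``proof'' is a one-line assertion that Itoh's vanishing theorem for anti-self-dual connections on compact K\"ahler surfaces \cite{it} applies verbatim to the basic deformation complex on a Sasaki $5$-manifold, and your transverse Bochner--Kodaira--Nakano computation --- harmonic representatives in $\mathcal{H}^{0,0}_{\overline{\partial}_\nabla}\omega \oplus \mathcal{H}^{0,2}_{\overline{\partial}_\nabla}\oplus\overline{\mathcal{H}^{0,2}_{\overline{\partial}_\nabla}}$, the $\Lambda F_\nabla$ and $F_\nabla^{0,2}$ terms killed by the instanton equation, and only the $s_T$-term surviving --- is precisely that argument spelled out. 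The only point to nail down carefully is the one you already flag, namely the exact form of the transverse Weitzenb\"ock identity (the version with the full rough Laplacian $\nabla^*\nabla$ is what yields covariant constancy, rather than mere closedness, in the $s_T=0$ case).
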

Recall that a Sasaki structure $(\xi,\eta,\Phi,g)$ is {\em Sasaki-Einstein} if $g$ is an Einstein metric. In this case the transverse scalar curvature is automatically positive, and we have:
\begin{corollary}
Let $M$ be a compact Sasaki-Einstein $5$-manifold and $P \to M$ a principal $G$-bundle. The moduli space $\mathcal{M}^*$ of irreducible ASD contact instantons on $P$ is smooth.
\end{corollary}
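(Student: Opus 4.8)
The plan is to deduce the corollary directly from the results already assembled in the excerpt, so the ``proof'' is really a chain of invocations rather than fresh analysis. First I would observe that a Sasaki--Einstein metric is in particular Einstein, hence transverse Einstein, so Proposition \ref{propeinst} applies; more importantly, the Einstein condition on a $5$-manifold forces the scalar curvature $s$ to be the (positive) constant $s = 2n = 4$ with the standard normalization, whence $s_T = s + 1 = 5 > 0$. Thus the hypothesis ``positive transverse scalar curvature'' of Proposition \ref{propvanish} is satisfied. (Even without pinning down the precise constant, the point is that a compact Einstein Sasaki manifold has $\mathrm{Ric}_T = \lambda g|_H$ for $\lambda > 0$, hence $s_T > 0$; I would phrase it this way so as not to depend on a sign convention for the Einstein constant.)

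Second, I would invoke Proposition \ref{propvanish} with this positivity: for every irreducible ASD contact instanton $\nabla$ on $P$ we get $H^2_B(\mathfrak{g}_P) = 0$. Third, I would feed this vanishing into Proposition \ref{propunob}, which then tells us that the obstruction map $\Phi$ vanishes at each such $\nabla$. Since this holds at \emph{every} point of $\mathcal{M}^*$, the final Corollary in \textsection\ref{secmodulispace} (the one stating that if $\Phi$ vanishes for every $\nabla \in \mathcal{M}^*$ then $\mathcal{M}^*$ is a smooth manifold) applies verbatim and gives the claim. I would also remark, for completeness, that the dimension at $\nabla$ is then $-\,\mathrm{ind}(\nabla) = \dim H^1_B(\mathfrak{g}_P)$, though the corollary as stated only asserts smoothness.

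There is essentially no hard step here: all the analytic work (Kuranishi model, transverse elliptic theory, the Bochner--Kodaira/Itoh-type argument behind Proposition \ref{propvanish}) has been done upstream. The only thing requiring a word of care is the very first point --- confirming that ``Sasaki--Einstein'' really does entail $s_T > 0$ --- and I would handle it exactly as the sentence preceding the corollary already asserts (``the transverse scalar curvature is automatically positive''), so even that is granted. If I wanted to be maximally careful I would note that one must check that the irreducibility hypothesis of Propositions \ref{propvanish} and \ref{propunob} matches the definition of $\mathcal{M}^*$, which it does by construction of $\mathcal{M}^*$ as the image of irreducible contact instantons. So the proof is a three-line citation: compute $s_T > 0$; apply Proposition \ref{propvanish} to kill $H^2_B$; apply Proposition \ref{propunob} and the smoothness corollary.

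\begin{proof}
A Sasaki--Einstein metric is Einstein, hence transverse Einstein, and the transverse scalar curvature is positive, $s_T > 0$. By Proposition \ref{propvanish}, $H^2_B(\mathfrak{g}_P) = 0$ for every irreducible ASD contact instanton $\nabla$ on $P$. By Proposition \ref{propunob} the obstruction map $\Phi$ then vanishes at each $\nabla \in \mathcal{M}^*$, and hence by the smoothness corollary of \textsection\ref{secmodulispace} the space $\mathcal{M}^*$ is a smooth manifold.
\end{proof}
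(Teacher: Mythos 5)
Your proof is correct and follows exactly the route the paper intends: Sasaki--Einstein forces $s_T>0$, Proposition \ref{propvanish} kills $H^2_B(\mathfrak{g}_P)$, and Proposition \ref{propunob} together with the smoothness corollary of \textsection\ref{secmodulispace} concludes. The only quibble is your parenthetical identification $s=2n$ (that is the Einstein constant, not the scalar curvature, which is $2n(2n+1)$), but since you explicitly note that only the positivity of $\mathrm{Ric}_T$ matters, this does not affect the argument.
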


\begin{proposition}
Let $M$ be a compact $K$-contact $5$-manifold with positive transverse scalar curvature and with $W_T^- = 0$ (resp. $W_T^{+}=0$). Then $H^2_B(\mathfrak{g}_P) = 0$ for any irreducible self-dual (resp. anti-self-dual) contact instanton.
\end{proposition}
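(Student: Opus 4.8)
The plan is to adapt the classical Bochner--Weitzenb\"ock argument for self-dual/anti-self-dual connections on K\"ahler (or merely Riemannian) $4$-manifolds --- as in the work of Atiyah--Hitchin--Singer \cite{ahs} and Itoh \cite{it} --- to the transverse setting of the basic deformation complex (\ref{basicdc}). Throughout I work with the transverse Hodge star $*_T$, the operators $D_T, D_T^*, D_V$ introduced in Section \ref{secdeform}, and the transverse Riemannian curvature decomposition (\ref{equcurvdecom}). By the identification established in the proof of Proposition \ref{propcohom}, $H^2_B(\mathfrak{g}_P)$ is represented by elements $\alpha \in \Omega^+_B(M,\mathfrak{g}_P)$ (in the SD case, $\Omega^-_B(M,\mathfrak{g}_P)$) satisfying $D_T^* \alpha = 0$ and $D_V \alpha = 0$; equivalently $\alpha$ is a transverse-harmonic self-dual (resp.\ anti-self-dual) basic $2$-form valued in $\mathfrak{g}_P$.

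The key step is a transverse Weitzenb\"ock formula. First I would write down the Laplacian $\Delta_B = D_B D_B^* + D_B^* D_B$ acting on $\Omega^+_B(M,\mathfrak{g}_P)$ and compare it to the transverse Bochner Laplacian $\overline{\nabla}^*\overline{\nabla}$ built from the transverse Levi-Civita connection $\overline{\nabla}$ on $\wedge^+ H^*$ together with the connection $\nabla$ on $\mathfrak{g}_P$. The difference is a zeroth-order curvature term that splits into two pieces: a contribution from the transverse Riemannian curvature acting on $\wedge^+ H^*$, and a contribution from the curvature $F$ of $\nabla$. For a self-dual $2$-form, the Riemannian piece involves only the self-dual part of the transverse curvature operator, namely $\tfrac{s_T}{3} + 2W_T^+$ in the standard normalization (the $B_T$ block does not enter because it maps $\wedge^- H^*$ to $\wedge^+ H^*$, not $\wedge^+$ to itself); crucially it does \emph{not} involve $W_T^-$. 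Since $\nabla$ is a contact instanton, $F$ lies entirely in $\wedge^+ H^* \otimes \mathfrak{g}_P$ (SD case) or $\wedge^- H^* \otimes \mathfrak{g}_P$ (ASD case), and the term $[F, \cdot]$ acting on an anti-self-dual $\alpha$ vanishes by the algebraic orthogonality of $\wedge^+$ and $\wedge^-$ --- this is exactly the transverse analogue of the mechanism that kills the curvature term in the $4$-dimensional story. So pairing $\Delta_B \alpha = 0$ with $\alpha$, integrating $\wedge\,\eta$ over $M$, and using $D_V \alpha = 0$ together with the hypothesis $W_T^- = 0$ (resp.\ $W_T^+ = 0$), one obtains
\[
0 = \|\overline{\nabla}\alpha\|^2 + \tfrac{1}{3}\int_M s_T\, B(\alpha, *_T \alpha)\wedge\eta.
\]
Since $s_T > 0$, both terms are non-negative, forcing $\alpha = 0$ and hence $H^2_B(\mathfrak{g}_P) = 0$.

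The main obstacle --- and the step requiring genuine care rather than invocation --- is verifying the precise transverse Weitzenb\"ock identity: one must check that $\overline{\nabla}^*\overline{\nabla}$ really does have the same symbol as $\Delta_B$ modulo the term $D_V^2$ (which vanishes on basic forms), that the curvature correction is the claimed combination of $s_T$, $W_T^\pm$ and $[F,\cdot]$, and that the signs work out so the quadratic form is positive. This is where the foliated structure enters nontrivially: $\overline{\nabla}$ is only a transverse connection, so all the integrations by parts must be done against $\eta$ and one must use that $\xi$ is Killing (the $K$-contact hypothesis) to commute $\mathcal{L}_\xi$ past $*_T$, exactly as in the proof of Proposition \ref{propcohom}. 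Once the pointwise identity is in hand, the argument is a verbatim transcription of \cite{ahs},\cite{it}. I would also remark, as in Proposition \ref{propvanish}, that when $s_T = 0$ the same computation shows every class in $H^2_B(\mathfrak{g}_P)$ is represented by a $\overline{\nabla}$-parallel form, rather than forcing outright vanishing.
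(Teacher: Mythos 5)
Your proposal is correct and follows exactly the route the paper intends: the paper states this proposition without proof, immediately after noting that the Weitzenb\"ock vanishing arguments of \cite{ahs} and \cite{it} transfer verbatim to the basic deformation complex, and your transverse Weitzenb\"ock formula on $\Omega^{\mp}_B(M,\mathfrak{g}_P)$ --- with the $[F,\cdot]$ term killed by the instanton equation and the Riemannian term reduced to $s_T$ and $W_T^{\mp}$ --- is precisely that transfer. The identification of harmonic representatives via $D_T^*\alpha = D_V\alpha = 0$, the matching of $W_T^-$ to the SD case and $W_T^+$ to the ASD case, and the integration against $\eta$ using the $K$-contact condition are all as in the paper's framework.
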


We say that a Sasaki $5$-manifold $M$ is {\em transverse Calabi-Yau} if $M$ has transverse complex structures $J_1 = J,J_2,J_3$ satisfying the quaternion relations $J_1 J_2 = J_3$, such that the $J_i$ are hermitian with respect to $g|_H$ and the associated transverse K\"ahler forms $\omega_1 = \omega, \omega_2, \omega_3$ are closed (in higher dimensions this would be the definition of a transverse hyperK\"ahler structure). The transverse Levi-Civita connection on $\wedge^+ H^*$ is flat, since $\omega_1,\omega_2,\omega_3$ are covariantly constant. By Equation (\ref{equcurvdecom}), we see that $W_T^+$ and $s_T$ both vanish.

\begin{corollary}\label{corcy}
Let $M$ be a compact Sasaki $5$-manifold with transverse Calabi-Yau structure and $P \to M$ a principal $G$-bundle. The moduli space $\mathcal{M}^*$ of irreducible ASD contact instantons on $P$ is smooth.
\end{corollary}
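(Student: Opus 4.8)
The plan is to show that $H^2_B(\mathfrak{g}_P) = 0$ for every irreducible ASD contact instanton $\nabla$ on $P$; granting this, smoothness of $\mathcal{M}^*$ follows exactly as in the Sasaki--Einstein corollary above, namely from Proposition \ref{propunob} (which gives vanishing of the obstruction map $\Phi$) together with the smoothness statement following Proposition \ref{proplocstruct}. The one extra wrinkle compared with the Sasaki--Einstein case is that a transverse Calabi--Yau $5$-manifold has $s_T = 0$ rather than $s_T > 0$, so Proposition \ref{propvanish} only yields that every class in $H^2_B(\mathfrak{g}_P)$ has a covariantly constant representative, and this must be upgraded to genuine vanishing.

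To carry this out I would first record that $s_T = 0$ (and $W_T^+ = 0$) on a transverse Calabi--Yau Sasaki $5$-manifold: since $\omega_1,\omega_2,\omega_3$ are covariantly constant, the transverse Levi--Civita connection on $\wedge^+ H^*$ is flat, and (\ref{equcurvdecom}) then forces $s_T$ and $W_T^+$ to vanish. Hence the $s_T = 0$ clause of Proposition \ref{propvanish} applies: each element of $H^2_B(\mathfrak{g}_P)$ is represented by a basic harmonic self-dual form $\Psi \in \mathcal{H}^+_B(\mathfrak{g}_P)$ that is covariantly constant. I would then use the flat global frame $\omega_1,\omega_2,\omega_3$ of $\wedge^+ H^*$ to write $\Psi = \sum_{i=1}^3 \omega_i \otimes a_i$ with $a_i \in \Omega^0(M,\mathfrak{g}_P)$. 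Covariant constancy of $\Psi$, together with that of the $\omega_i$, forces each $a_i$ to be a $d_\nabla$-parallel section of $\mathfrak{g}_P$, i.e.\ an element of the Lie algebra of $Aut(\nabla)$; since $\nabla$ is irreducible and $G$ is semisimple, $Aut(\nabla) = Z(G)$ is finite, so $a_i = 0$ and $\Psi = 0$. Thus $H^2_B(\mathfrak{g}_P) = 0$, and smoothness of $\mathcal{M}^*$ follows as indicated.

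\textbf{The main obstacle} is the passage from ``covariantly constant'' to ``zero''. One must be careful about which connection is meant: the covariant constancy produced by Proposition \ref{propvanish} is with respect to the tensor-product connection built from the transverse Levi--Civita connection $\overline\nabla$ on $\wedge^+H^*$ and $\nabla$ on $\mathfrak{g}_P$, and a priori only along the transverse distribution $H$; one should combine it with the basic condition $D_V \Psi = 0$ to see that the coefficient functions $a_i$ are parallel for $d_\nabla$ on all of $M$, so that they really are infinitesimal automorphisms of $\nabla$. Once that is pinned down, irreducibility finishes the job. (If one preferred to avoid Proposition \ref{propvanish} altogether, the alternative is to run the Weitzenb\"ock argument of \cite{it} directly for the basic deformation complex on a transverse Calabi--Yau manifold, where the vanishing of $s_T$ and $W_T^+$ makes the curvature term nonnegative and the term involving $F_\nabla$ is again controlled by irreducibility; but invoking Proposition \ref{propvanish} is the cleaner route.)
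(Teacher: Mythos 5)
Your proposal is correct and follows the route the paper intends: the paper states Corollary \ref{corcy} without an explicit proof, immediately after observing that a transverse Calabi--Yau structure forces $W_T^+ = 0$ and $s_T = 0$, so the argument is exactly the one you give --- invoke the $s_T = 0$ clause of Proposition \ref{propvanish} to get covariantly constant representatives of $H^2_B(\mathfrak{g}_P)$, then use the flat frame $\omega_1,\omega_2,\omega_3$ of $\wedge^+H^*$ and irreducibility (with $Z(G)$ finite since $G$ is semisimple) to conclude these vanish, whence Proposition \ref{propunob} gives smoothness. Your care about combining transverse covariant constancy with $D_V\Psi = 0$ correctly supplies the one step the paper leaves implicit.
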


\begin{remark}\label{remtranscy}
Any compact Sasakian manifold with transverse Calabi-Yau structure is automatically quasi-regular \cite{bg}, so is an orbifold circle bundle over a Calabi-Yau orbifold.
\end{remark}

%%%%%%%%%%%%%%%%%%%%%%%%%%%%%%%%%%%%%%%%%%%%%%%%%%%%%%%%%%%%%%%

\subsection{Transverse holomorphic bundles}\label{secholo}

On a Sasaki $5$-manifold $M$ we have an identification $\Omega^+_H(M,\mathfrak{g}_P)_{\mathbb{C}} = \Omega^{2,0}_H(M,\mathfrak{g}_P) \oplus \left( \Omega^0_H(M,\mathfrak{g}_P)_{\mathbb{C}} \wedge \omega \right) \oplus \Omega^{0,2}_H(M,\mathfrak{g}_P)$. Thus a connection $A$ on $P$ with curvature $F$ is an ASD contact instanton if and only if
\begin{equation*}
\begin{aligned}
i_\xi F &= 0, & F^{2,0} &= 0, & \Lambda F &= 0.
\end{aligned}
\end{equation*}
As usual the condition $i_\xi F = 0$ gives $P$ a transverse structure and $A$ is a transverse connection. This means that $P$ admits local trivialisations over foliated charts $\{ U_\alpha \}$ for which the transition functions $g_{\alpha \beta} \colon  U_{\alpha \beta} \to G$ depend only on the transverse coordinates. The condition $F^{2,0} = 0$ defines an integrable $\overline{\partial}$-connection on $P$. Thus $P$ is a {\em transverse holomorphic bundle}. Since $G$ is compact it has a complexification $G_{\mathbb{C}}$. To say that $P$ has a transverse holomorphic structure means that the local trivialisations of $P$ can be chosen so that the transition functions $g_{\alpha \beta} \colon  U_{\alpha \beta} \to G_{\mathbb{C}}$ are holomorphic functions of the transverse coordinates. We interpret the condition $\Lambda F = 0$ as saying that $A$ is a {\em transverse Hermitian-Einstein connection}.\\

Let $A$ be an ASD contact instanton on $P$ and write $d_A = \partial_A + \overline{\partial}_A$. To indicate the dependence on $A$ we will use notation such as $D_{T,A}$, $H_{B,A}^*(\mathfrak{g}_P)$ and $\mathcal{H}^*_{T,A}$ for $D_T$, $H_B^*(\mathfrak{g}_P)$ and $\mathcal{H}^*_T$. Using the decomposition into $(1,0)$- and $(0,1)$-forms, the basic deformation complex (\ref{basicdc}) decomposes (over $\mathbb{C}$) as follows:
\begin{equation*}
\xymatrix{
& \Omega^{1,0}_B(M,\mathfrak{g}_P) \ar[r]^{\partial_A} \ar[dr]^{\Lambda \overline{\partial}_A} & \Omega^{2,0}_B(M,\mathfrak{g}_P) \\
\Omega^{0,0}_B(M,\mathfrak{g}_P) \ar[ur]^{\partial_A} \ar[dr]^{\overline{\partial}_A}& & \Omega^{0,0}_B(M,\mathfrak{g}_P) \\
& \Omega^{0,1}_B(M,\mathfrak{g}_P) \ar[r]^{\overline{\partial}_A} \ar[ur]^{\Lambda \partial_A} & \Omega^{0,2}_B(M,\mathfrak{g}_P)
}
\end{equation*}
Since $M$ is Sasakian and since the K\"ahler identities are local we have the {\em transverse K\"ahler identities}:
\begin{equation*}
\begin{aligned}
\partial_A^* &= -i[ \overline{\partial}_A , \Lambda ], & \overline{\partial}_A^* &= i[\partial_A , \Lambda].
\end{aligned}
\end{equation*}
Let $\Delta_{\overline{\partial}_A} = \overline{\partial}_A \overline{\partial}_A^* + \overline{\partial}_A^* \overline{\partial}_A \colon  \Omega^{p,q}_B(M,\mathfrak{g}_P) \to \Omega^{p,q}_B(M,\mathfrak{g}_P)$ be the Laplacian associated to $\overline{\partial}_A$ and let $\mathcal{H}^{p,q}_{\overline{\partial}_A}$ denote the space of basic $\overline{\partial}_A$-harmonic sections of $\Omega^{p,q}_B(M,\mathfrak{g}_P)$.
\begin{proposition}\label{prophodge}
We have isomorphisms
\begin{equation*}
\begin{aligned}
H^0_{B,A}(\mathfrak{g}_P)_{\mathbb{C}} & \simeq \mathcal{H}^{0,0}_{\overline{\partial}_A}, & H^1_{B,A}(\mathfrak{g}_P)_{\mathbb{C}} &\simeq \mathcal{H}^{0,1}_{\overline{\partial}_A} \oplus \overline{\mathcal{H}^{0,1}_{\overline{\partial}_A}}, & H^2_{B,A}(\mathfrak{g}_P)_{\mathbb{C}} &\simeq \mathcal{H}^{0,2}_{\overline{\partial}_A} \oplus \overline{\mathcal{H}^{0,2}_{\overline{\partial}_A}} \oplus \mathcal{H}^{0,0}_{\overline{\partial}_A}.
\end{aligned}
\end{equation*}
\end{proposition}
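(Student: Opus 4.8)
We will adapt to the transverse setting the classical Hodge-theoretic decomposition of the anti-self-dual deformation complex over a K\"ahler surface (cf.\ \cite{it}), the essential ingredients being the transverse K\"ahler identities recorded above and the Hermitian--Einstein condition $\Lambda F_A = 0$. By the Hodge theory for the transverse elliptic complex (\ref{basicdc}), set up exactly as in the proof of Proposition \ref{propcohom}, each group $H^k_{B,A}(\mathfrak{g}_P)$ is represented by its space $\mathcal{H}^{k,0}_{T,A}$ of basic harmonic forms, i.e.\ the basic $\alpha$ with $D_{B,A}\alpha = 0$ and $D_{B,A}^*\alpha = 0$, where on basic sections $D_{B,A} = d_A$, on basic $1$-forms $D_{B,A}\alpha = (d_A\alpha)^+$, and $D_{B,A}^* = d_A^*$ on basic $1$- and self-dual $2$-forms. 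After complexifying and splitting into transverse bidegree, the complex takes the diamond form displayed before the proposition, and the task is to decompose each $\mathcal{H}^{k,0}_{T,A}$ into $(p,q)$-pieces. For $k=0$ this is immediate: $\mathcal{H}^{0,0}_{T,A}$ consists of basic $s$ with $\partial_A s = \overline{\partial}_A s = 0$, and on $(0,0)$-forms the transverse K\"ahler identities together with $\Lambda F_A = 0$ give $\Delta_{\partial_A} = \Delta_{\overline{\partial}_A}$, so $\overline{\partial}_A s = 0$ already forces $\partial_A s = 0$; hence $H^0_{B,A}(\mathfrak{g}_P)_{\mathbb{C}} \simeq \mathcal{H}^{0,0}_{\overline{\partial}_A}$.

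For $k=1$, write a basic harmonic $\alpha = \alpha^{1,0} + \alpha^{0,1}$. The $(2,0)$-, $(0,2)$- and $\omega$-components of $(d_A\alpha)^+$ are $\partial_A\alpha^{1,0}$, $\overline{\partial}_A\alpha^{0,1}$ and a multiple of $\Lambda(\overline{\partial}_A\alpha^{1,0} + \partial_A\alpha^{0,1})$, while the transverse K\"ahler identities $\partial_A^* = -i[\overline{\partial}_A,\Lambda]$, $\overline{\partial}_A^* = i[\partial_A,\Lambda]$ give $d_A^*\alpha = i\Lambda(\overline{\partial}_A\alpha^{1,0} - \partial_A\alpha^{0,1})$. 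Combining vanishing of the $\omega$-component with $d_A^*\alpha = 0$ yields $\Lambda\overline{\partial}_A\alpha^{1,0} = \Lambda\partial_A\alpha^{0,1} = 0$, equivalently $\partial_A^*\alpha^{1,0} = \overline{\partial}_A^*\alpha^{0,1} = 0$; together with $\partial_A\alpha^{1,0} = \overline{\partial}_A\alpha^{0,1} = 0$ this says $\alpha^{1,0}$ is $\partial_A$-harmonic and $\alpha^{0,1}$ is $\overline{\partial}_A$-harmonic, and the converse is the same computation read backwards. Since $\mathfrak{g}_P$ carries a real structure, complex conjugation identifies the space of basic $\partial_A$-harmonic $(1,0)$-forms with $\overline{\mathcal{H}^{0,1}_{\overline{\partial}_A}}$, giving $H^1_{B,A}(\mathfrak{g}_P)_{\mathbb{C}} \simeq \mathcal{H}^{0,1}_{\overline{\partial}_A} \oplus \overline{\mathcal{H}^{0,1}_{\overline{\partial}_A}}$.

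The case $k=2$ is where I expect the main difficulty. A self-dual basic form $\psi = \psi^{2,0} + f\,\omega + \psi^{0,2}$ is harmonic iff $d_A^*\psi = 0$, a single $\Omega^1$-valued equation whose $(1,0)$- and $(0,1)$-parts a priori couple the scalar $f$ to $\psi^{2,0}$ and $\psi^{0,2}$. Using the transverse K\"ahler identities and the commutator relation $[\Lambda,\omega\wedge\,] = (2-p-q)\,\mathrm{id}$ on transverse $(p,q)$-forms, these parts reduce to $\partial_A^*\psi^{2,0} + i\,\partial_A f = 0$ and $\overline{\partial}_A^*\psi^{0,2} - i\,\overline{\partial}_A f = 0$. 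In the transverse Hodge decompositions of $\Omega^{1,0}_B(\mathfrak{g}_P)$ and $\Omega^{0,1}_B(\mathfrak{g}_P)$ the image of $\partial_A^*$ is $L^2$-orthogonal to the image of $\partial_A$, and likewise for $\overline{\partial}_A$, so each summand must vanish separately: $\partial_A^*\psi^{2,0} = 0$, $\overline{\partial}_A^*\psi^{0,2} = 0$, and $\partial_A f = \overline{\partial}_A f = 0$. As transverse $(3,0)$- and $(0,3)$-forms vanish, this makes $\psi^{2,0}$ $\partial_A$-harmonic, $\psi^{0,2}$ $\overline{\partial}_A$-harmonic and $f \in \mathcal{H}^{0,0}_{\overline{\partial}_A}$; conjugating the first summand gives $H^2_{B,A}(\mathfrak{g}_P)_{\mathbb{C}} \simeq \overline{\mathcal{H}^{0,2}_{\overline{\partial}_A}} \oplus \mathcal{H}^{0,2}_{\overline{\partial}_A} \oplus \mathcal{H}^{0,0}_{\overline{\partial}_A}$. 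The only point beyond the K\"ahler-surface argument of \cite{it} is that every member of the K\"ahler package is invoked in its transverse form, which is legitimate since these identities are local and $M$ is Sasakian; recognising the complexified complex as the $\partial_A$- and $\overline{\partial}_A$-Dolbeault complexes glued along their extreme terms is a convenient way to organise the bookkeeping.
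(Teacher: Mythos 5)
Your proof is correct and takes essentially the same route as the paper, which likewise reduces everything to the transverse K\"ahler identities applied to the bidegree splitting of basic harmonic forms and writes out only the degree-one case (using $\alpha = a + \overline{b}$ with $a,b\in\Omega^{0,1}_B(M,\mathfrak{g}_P)$, the same computation as your $\alpha^{1,0}+\alpha^{0,1}$ splitting followed by conjugation). Your degree-zero and degree-two arguments supply exactly the details the paper omits; the only step worth flagging as implicit is that the $L^2$-orthogonality of $\mathrm{im}\,\partial_A$ and $\mathrm{im}\,\partial_A^*$ in degree two rests on $\partial_A^2 = F_A^{2,0} = 0$, which is part of the anti-self-duality (transverse Hermitian--Einstein) condition.
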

\begin{proof}
The proof is a straightforward application of the transverse K\"ahler identities. We sketch the details for the degree $1$ case. Let $\alpha \in \Omega^1_B(M,\mathfrak{g}_P)_{\mathbb{C}}$ and write $\alpha = a + \overline{b}$ for $a,b \in \Omega^{0,1}_B(M,\mathfrak{g}_P)$. Then $\alpha$ is $D_{T,A}$-harmonic if and only if
\begin{equation*}
\begin{aligned}
\overline{\partial}_A a &= 0, & \Lambda (\partial_A a + \overline{\partial_A b} ) & = 0, \\
\overline{\partial}_A b &= 0, & \overline{\partial}_A^* a  + \partial_A^* \overline{b} & = 0.
\end{aligned}
\end{equation*}
From the K\"ahler identites this is equivalent to
\begin{equation*}
\overline{\partial}_A a = \overline{\partial}_A b = \Lambda \partial_A a = \Lambda \partial_A b = 0.
\end{equation*}
Using the K\"ahler identities a second time this is equivalent to 
\begin{equation*}
\overline{\partial}_A a = \overline{\partial}_A b = \overline{\partial}_A^* a = \overline{\partial}_A^* b = 0
\end{equation*}
which is exactly the condition that $a,b$ are $\overline{\partial}_A$-harmonic.
\end{proof}

%%%%%%%%%%%%%%%%%%%%%%%%%%%%%%%%%%%%%%%%%%%%%%%%%%%%%%%%%%%%%%%%%%%%%%

\subsection{Geometry of the moduli space}\label{secgmoduli}

We show that the moduli space $\mathcal{M}^*$ of ASD-contact instantons on a compact Sasaki $5$-manifold $M$ with positive transverse scalar curvature is a K\"ahler manifold. Throughout this section we either assume that $M$ has positive transverse scalar curvature so that the moduli space $\mathcal{M}^*$ is smooth, or we restrict to the smooth points of $\mathcal{M}^*$. As usual choose an invariant metric $B( \, , \, )$ on $\mathfrak{g}$. We let $h$ be the natural $L^2$-metric on $\mathcal{M}^*$, that is for a gauge equivalence class $[A] \in \mathcal{M}^*$ we have
\begin{equation}\label{metric}
h_A( \alpha , \beta ) = \int_M B( \alpha \wedge *_T \beta ) \wedge \eta
\end{equation}
where $\alpha,\beta$ are the harmonic representatives of classes in $H^1_{B,A}(\mathfrak{g}_P) \simeq T_A \mathcal{M}^*$. Note that in order to show that $h$ is a well-defined Riemannian metric on $\mathcal{M}^*$ we need to check that $h$ does not depend on the choice of connection $A$ representing the equivalence class $[A] \in \mathcal{M}^*$. Under a gauge transform $d+A \mapsto g^{-1}(d+A)g$, the infinitesimal deformations $\alpha,\beta$ map to $g^{-1}\alpha g$, $g^{-1} \beta g$, which are the corresponding harmonic representatives. By gauge invariance of (\ref{metric}) we have that $h$ does not depend on the choice of representative $A$.\\

Next we define an almost complex structure $\mathcal{J}$ on $\mathcal{M}^*$. By the proof of Proposition \ref{prophodge} we have that $\alpha \in \Omega^1_B(M,\mathfrak{g}_P)$ is $D_{T,A}$-harmonic if and only if $\alpha = a + \overline{a}$, where $a \in \Omega^{0,1}(M,\mathfrak{g}_P)$ is $\overline{\partial}_A$-harmonic. It follows that the space of $D_{T,A}$-harmonic $1$-forms is closed under the action of the complex structure $J$ and thus induces an almost complex structure $\mathcal{J}$ on $\mathcal{M}^*$. Let $\Omega$ be the bilinear form on $T\mathcal{M}^*$ given by $\Omega(\alpha,\beta) = h( \alpha , \mathcal{J}\beta)$. Using the identity $*_T \alpha = \frac{1}{2} J\alpha \wedge \omega$ for $\alpha \in \Omega^1_H(M,\mathfrak{g}_P)$, we find
\begin{equation}\label{equomega}
\Omega_A( \alpha , \beta ) = \frac{1}{2} \int_M B( \alpha \wedge \beta ) \wedge \omega \wedge \eta,
\end{equation}
where $\alpha,\beta$ are the harmonic forms representing elements of $H^1_{B,A}(\mathfrak{g}_P)$. This shows that $\Omega$ is skew-symmetric, so $h$ is hermitian with respect to $J$ and $\Omega$ is the associated $2$-form.

\begin{proposition}\label{propkahler}
The complex structure $\mathcal{J}$ is integrable and the hermitian form $\Omega$ is closed, hence $\mathcal{M}^*$ has a natural K\"ahler structure.
\end{proposition}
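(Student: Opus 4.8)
The plan is to adapt the classical argument (as in the case of moduli spaces of instantons on K\"ahler surfaces, e.g. Itoh, or the ASD moduli on K\"ahler surfaces) to the transverse setting, using the transverse K\"ahler identities established in Section \ref{secholo}. First I would prove integrability of $\mathcal{J}$. The tangent space $T_{[A]}\mathcal{M}^*$ is identified with the space of $\overline{\partial}_A$-harmonic basic $(0,1)$-forms $\mathcal{H}^{0,1}_{\overline{\partial}_A}$ by Proposition \ref{prophodge}, and $\mathcal{J}$ is the induced complex structure coming from the transverse complex structure $J$. The standard approach is to exhibit holomorphic coordinates: the deformation problem is governed by the Maurer--Cartan equation for the $\overline{\partial}$-operator, $\overline{\partial}_A a + \frac{1}{2}[a\wedge a]^{0,2} = 0$ with the transverse Coulomb gauge condition $\overline{\partial}_A^* a = 0$, and the Kuranishi map $a \mapsto a + \frac12 \overline{\partial}_A^* G_{\overline{\partial}_A}[a\wedge a]$ provides complex-analytic coordinate charts on $\mathcal{M}^*$ near $[A]$ in which $\mathcal{J}$ is the standard complex structure. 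Compatibility of these charts (the transition maps are holomorphic) then shows $\mathcal{M}^*$ is a complex manifold with $\mathcal{J}$ integrable. Alternatively, and perhaps more cleanly, one checks that the Nijenhuis tensor of $\mathcal{J}$ vanishes by a direct computation using that $D_{T,A}$-harmonicity is preserved by $J$, together with the fact that the bracket of two $\overline{\partial}_A$-harmonic forms, after projecting back to harmonic space, behaves holomorphically.

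Next I would show $\Omega$ is closed. Using the explicit formula \eqref{equomega}, $\Omega_A(\alpha,\beta) = \frac12 \int_M B(\alpha\wedge\beta)\wedge\omega\wedge\eta$, the strategy is to differentiate $\Omega$ along a path of connections $A_t = A + t\phi$ in the moduli space and show the variation is symmetric in the three tangent directions, which gives $d\Omega = 0$ when combined with skew-symmetry. The key point is that although $\alpha,\beta$ must be the $\Delta_{T,A_t}$-harmonic representatives, which depend on $t$, the correction terms (the exact pieces picked up when re-projecting to harmonic space) drop out of the integral because $\omega\wedge\eta$ is closed and $D_{T}$-exact forms pair trivially against harmonic ones under $\int_M B(\cdot\wedge\cdot)\wedge\omega\wedge\eta$ — this is exactly the computation used in the preceding proposition showing $\omega\colon H^0_B\to H^2_B$ is injective, applied here to the wedge pairing. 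So $\Omega$ descends to a well-defined form whose exterior derivative one computes by the usual formula $d\Omega(\phi_1,\phi_2,\phi_3) = \sum_{\rm cyc}\phi_1\cdot\Omega(\phi_2,\phi_3) - \sum\Omega([\phi_1,\phi_2]_{\mathcal{M}},\phi_3)$; since $\mathcal{M}^*$ has no natural vector fields the practical route is to use that $\Omega$ is the restriction of a closed form on the infinite-dimensional space $\mathcal{A}^*$ of connections (namely $\phi_1,\phi_2\mapsto \frac12\int_M B(\phi_1\wedge\phi_2)\wedge\omega\wedge\eta$, which is constant-coefficient hence closed) and that restriction to the symplectic quotient of a closed form is closed. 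Finally, by Proposition \ref{propvanish} (positive transverse scalar curvature) $\mathcal{M}^*$ is smooth, $\Omega$ is non-degenerate since $h$ is a genuine Riemannian metric, and a Hermitian manifold with closed Hermitian form and integrable complex structure is K\"ahler.

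I expect the main obstacle to be the integrability of $\mathcal{J}$, specifically making rigorous the claim that the Kuranishi slices give \emph{holomorphic} coordinate charts rather than merely smooth ones. This requires knowing that the harmonic projection and Green's operator for $\Delta_{\overline{\partial}_A}$ interact with the complex structure correctly, and that the obstruction space $\mathcal{H}^{0,2}_{\overline{\partial}_A}$ (which vanishes in the positive scalar curvature case) carries a compatible complex structure so that the zero locus of the holomorphic obstruction map is a complex-analytic set. In the smooth case $H^2_{B,A}(\mathfrak{g}_P)=0$ this is unobstructed and cleaner, but some care is still needed to verify that the transition between two Coulomb slices is a biholomorphism; this is where the transverse K\"ahler identities do the essential work, exactly as in the closed K\"ahler surface case treated by Itoh and by Kim, and the argument transplants verbatim because all the identities used are local and hold transversally on a Sasaki manifold.
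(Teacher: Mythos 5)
Your overall strategy (adapt the K\"ahler-surface argument of Itoh via the transverse K\"ahler identities) matches the paper's, but the actual technique is different, and the two places where you depart are also where your argument has gaps. The paper does a single unified computation: it takes the real Coulomb-gauge Kuranishi chart $x \mapsto A + F_T^{-1}(x)$ from Section \ref{secmodulispace}, shows that the harmonic representative $a(x,\lambda)$ of the coordinate vector field $\partial_\lambda$ has the expansion $a(x,\lambda) = \lambda - \delta_{T,A}[\lambda,x] + D_{T,A}\mu(x,\lambda) + \dots$, and then checks that all first-order corrections to $h(\partial_{\lambda_1},\partial_{\lambda_2})$ and $\Omega(\partial_{\lambda_1},\partial_{\lambda_2})$ pair to zero against harmonic forms. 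Hence the $1$-jets of $h$ and $\Omega$ are constant at the origin of these charts; this gives $d\Omega = 0$ immediately, and integrability of $\mathcal{J}$ comes for free because $\mathcal{J}$ is determined by $h$ and $\Omega$, so its $1$-jet is also constant and its Nijenhuis tensor (a first-order expression in $\mathcal{J}$) vanishes. Your proposal instead splits the statement in two and attacks each half with a different standard tool, and misses this trick for integrability.

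For closedness of $\Omega$, your moment-map argument (restrict the constant-coefficient form $\frac{1}{2}\int_M B(\cdot\wedge\cdot)\wedge\omega\wedge\eta$ from $\mathcal{A}^*$ and invoke reduction) is morally sound: the ASD contact instanton equations are formally $F^{0,2}=0$, $i_\xi F=0$ (a $\mathcal{J}$-complex subvariety) intersected with the zero level of the moment map $\Lambda F$. But in this infinite-dimensional setting the assertion that the induced form on the quotient is closed is exactly what needs proof, and the proof is the chart computation: one must verify that the identification $T_{[A]}\mathcal{M}^*\simeq\mathcal{H}^1_{T,A}$ supplied by the Kuranishi slice is compatible, to first order, with the affine structure on $\mathcal{A}^*$, which is precisely the content of the expansion of $a(x,\lambda)$ above. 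For integrability the gap is more serious: the charts actually constructed are the real slices $Z_A = \{\alpha \mid D_{T,A}^*\alpha = 0,\ D_{T,A}\alpha + \frac{1}{2}[\alpha,\alpha]=0\}$, and nothing identifies these a priori with holomorphic Kuranishi charts for the $\overline{\partial}_A$-Maurer--Cartan equation; showing that transition maps between two real Coulomb slices are holomorphic for $\mathcal{J}$ is not a verbatim transplant but is essentially equivalent to the $1$-jet computation you are trying to avoid. (The clean way to get holomorphic charts would be a Hitchin--Kobayashi-type identification of $\mathcal{M}^*$ with a moduli space of transverse holomorphic structures, which is not established in the paper and is a substantial theorem in itself.) So your plan is workable, but to close it you should either carry out the normal-coordinate computation, at which point you recover the paper's proof, or supply the missing comparison between the real and holomorphic gauge slices.
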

\begin{remark}
This proposition extends a result of Biswas and Schumacher, in which the moduli space is constructed for quasi-regular Sasaki manifolds and shown to be K\"ahler \cite{bisc}. 
\end{remark}
\begin{proof}
The proof is similar to \cite{it2}. The deformation theory of the contact instanton equations in Section \ref{secmodulispace} provides distinguished coordinate systems on $\mathcal{M}^*$. We show that these are normal coordinates for the metric $h$ and that in these coordinates the $1$-jet of $\Omega$ at the origin is constant. This will show that $\Omega$ is closed and $\mathcal{J}$ is integrable.\\

Let $A \in \mathcal{M}^*$ be an irreducible contact instanton. Recall from Section \ref{secmodulispace} that there is an open neighborhood $U$ of $0 \in H^1_{B,A}(\mathfrak{g}_P)$ over which we have a map $F^{-1}_T \colon U \to Z_A = \{ \alpha \in \Omega^1_B(M,\mathfrak{g}_P) \, | \, D_{T,A}^* \alpha = 0, \, D_{T,A}\alpha + \frac{1}{2}[\alpha , \alpha ] = 0 \}$ and that this gives local coordinates on $\mathcal{M}^*$ centred at $A$. Here $\Omega^k_B(M,\mathfrak{g}_P)$ denotes $\mathfrak{g}_P$-valued forms which are basic with respect to $A$, but since $F^{-1}_T(x)$ is itself basic with respect to $A$, we have that $A$ and $A + F^{-1}_T(x)$ define the same spaces of basic forms. For this reason the connections $A$ and $A + F^{-1}_T(x)$ define the same operator $D_V$. We will write $f\colon  U \to Z_A$ for $F^{-1}_T$. Thus a point $x \in U \subseteq H^1_{B,A}(\mathfrak{g}_P)$ corresponds to the connection $A+ f(x) \in \mathcal{M}^*$. Under the identification $H^1_{B,A}(\mathfrak{g}_P) = \mathcal{H}^1_{T,A}$ we can take $x$ to be a harmonic $1$-form. By definition of $f(x) = F_T^{-1}(x)$, we have $x = f(x) + \frac{1}{2} \delta_{T,A} [ f(x) , f(x) ]$, where $\delta_{T,A} = D_{T,A}^* G_{T,A}$. It follows that $f(x) = x - \frac{1}{2} \delta_{T,A} [ x , x ] + r_1(x)$, where $r_1(x)$ has vanishing $2$-jet at $x=0$. Let $\lambda \in \mathcal{H}^1_{T,A}$, then $\lambda$ defines a vector field $\partial_\lambda$ on $U$ and we have
\begin{equation*}
f_*(\partial_\lambda)(x) = \partial_\lambda f(x) = \lambda - \delta_{T,A} [ \lambda , x ] + r_2(x,\lambda),
\end{equation*}
where for fixed $\lambda$, $r_2(x,\lambda)$ has vanishing $1$-jet at $x=0$. Note that $\partial_\lambda f(x)$ is a locally defined vector field on $\mathcal{M}^*$. More specifically $\partial_\lambda f(x)$ represents a class in $H^1_{B,A+f(x)}(\mathfrak{g}_P)$ which is the deformation of $A + f(x)$ in the direction of the vector field. In general $\partial_\lambda f(x)$ is not the harmonic representative. Let us denote by $a(x,\lambda)$ the harmonic form representing $[ \partial_\lambda f(x)]$ so that there exists a $\mu(x,\lambda) \in \Omega^0_B(M,\mathfrak{g}_P)$ such that $a(x,\lambda) = \partial_\lambda f(x) + D_{T,A+f(x)} \mu(x,\lambda)$. For this to be harmonic we need $D_{T,A+f(x)}^* a(x,\lambda) = 0$, that is
\begin{equation}\label{equmu}
D_{T,A+f(x)}^* D_{T,A+f(x)} \mu(x,\lambda) + D_{T,A+f(x)}^* \partial_\lambda f(x) = 0.
\end{equation}
This is not quite an elliptic equation, but we can easily remedy this for if $\mu(x,\lambda)$ satisfies (\ref{equmu}), then since $D_V \mu(x,\lambda) = 0$ we have
\begin{equation}\label{equmu2}
D_{T,A+f(x)}^* D_{T,A+f(x)} \mu(x,\lambda) - D_V^2 \mu(x,\lambda) + D_{T,A+f(x)}^* \partial_\lambda f(x) = 0,
\end{equation}
which clearly is elliptic. For fixed $x,\lambda$, any two solutions $\mu_1,\mu_2 \in \Omega^0(M,\mathfrak{g}_P)$ of (\ref{equmu2}) differ by an element of $\mathcal{H}_{B,A+f(x)}^0$. But since $A + f(x)$ is irreducible this space is trivial, hence $\mu(x,\lambda)$ is the unique solution to (\ref{equmu2}). The upshot is that by standard elliptic theory we have that $\mu(x,\lambda)$ depends smoothly on $x$ (it is clearly linear in $\lambda$). Lastly note that $\mu(0,\lambda) = 0$, by uniqueness.\\

Now we can show that $x \mapsto f(x)$ gives normal coordinates centred at $A$. We use $ + \dots $ to denote terms whose $1$-jet vanishes at $x=0$. Let $\lambda_1,\lambda_2 \in \mathcal{H}^1_{T,A}$ define vector fields $\partial_{\lambda_1},\partial_{\lambda_2}$. We will determine the $1$-jet of $h( \partial_{\lambda_1} , \partial_{\lambda_2})$ at $x=0$. Note first that
\begin{align*}
a(x,\lambda) &= \lambda - \delta_{T,A} [\lambda , x] + D_{T,A+f(x)} \mu(x,\lambda) + \dots \\
&= \lambda - \delta_{T,A}[\lambda,x] + D_{T,A} \mu(x,\lambda) + \dots
\end{align*}
Letting $\langle \, , \, \rangle$ denote the $L^2$-inner product on $\Omega^*(M,\mathfrak{g}_P)$ we have:
\begin{align*}
h( \partial_{\lambda_1} , \partial_{\lambda_2} ) &= \langle a(x,\lambda_1) , a(x,\lambda_2) ) \rangle \\
&= \langle \lambda_1 - \delta_{T,A} [\lambda_1 , x ] + D_{T,A}\mu(x,\lambda_1) , \lambda_2 - \delta_{T,A} [\lambda_2 , x ] + D_{T,A}\mu(x,\lambda_2) \rangle + \dots \\
&= \langle \lambda_1 , \lambda_2 \rangle + \langle \lambda_1 , - \delta_{T,A}[\lambda_2 , x] + D_{T,A} \mu(x,\lambda_2) \rangle \\
& \; \; \; \; \; \; + \langle -\delta_{T,A} [\lambda_1 , x] D_{T,A} \mu(x,\lambda_1) , \lambda_2 \rangle + \dots \\
&= \langle \lambda_1 , \lambda_2 \rangle,
\end{align*}
where we have used the fact that $D_{T,A} \lambda_i = D_{T,A}^* \lambda_i = 0$. This shows that $f$ does in fact define normal coordinates at $A$.\\

We now consider the hermitian form $\Omega(\alpha , \beta)$. From (\ref{equomega}) we find
\begin{align*}
\Omega( a(x,\lambda_1) , a(x,\lambda_2) ) &= \frac{1}{2} \int_M B( \lambda_1 - \delta_{T,A}[\lambda_1,x] \wedge \lambda_2 - \delta_{T,A}[\lambda_2,x]) \wedge \omega \wedge \eta + \dots \\
&= \frac{1}{2} \int_M  B(\lambda_1 \wedge \lambda_2) \wedge \omega \wedge \eta
- \int_M B( \lambda_1 \wedge \delta_{T,A}[\lambda_2,x]) \wedge \omega \wedge \eta \\
& \; \; \; \; \; \;  - \int_M B( \delta_{T,A} [\lambda_1,x] \wedge \lambda_2) \wedge \omega \wedge \eta + \dots
\end{align*}
so it will suffice to show that $\int_M B(\delta_{T,A}[\lambda_1, x] \wedge \lambda_2 ) \wedge \omega \wedge \eta = 0$. But this is clear since $\int_M B(\delta_{T,A}[\lambda_1, x] \wedge \lambda_2 ) \wedge \omega \wedge \eta = \langle D_{T,A}^* G_{T,A} [\lambda_1 , x] , J \lambda_2 \rangle = \langle G_{T,A}[\lambda_1,x] , D_{T,A} J \lambda_2 \rangle = 0$, since $J \lambda_2$ is $D_{T,A}$-harmonic. This shows that $\Omega$ is closed. In fact, since the $1$-jets of $h$ and $\Omega$ are constant at $x=0$ in these coordinates, the same is true of $\mathcal{J}$, which implies that $\mathcal{J}$ is integrable and $\mathcal{M}^*$ is K\"ahler.
\end{proof}

Next we consider the case where $M$ has a transverse Calabi-Yau structure. From Corollary \ref{corcy} we see that the moduli space $\mathcal{M}^*$ of irreducible ASD contact instantons on $M$ for a principal $G$-bundle $P$ is smooth. We show that $\mathcal{M}^*$ is naturally a hyperK\"ahler manifold. The transverse K\"ahler identities apply to each of these complex structures showing that the space $\mathcal{H}^1_T$ of harmonic forms is closed under the action of $J_1,J_2,J_3$. This defines almost complex structures $\mathcal{J}_1 = \mathcal{J}, \mathcal{J}_2, \mathcal{J}_3$ on $\mathcal{M}^*$ which are hermitian with respect to the $L^2$-metric $h$. This gives an $Sp(m)$-structure on $\mathcal{M}^*$, where $dim_{\mathbb{R}}(\mathcal{M}^*) = 4m$. Let $\Omega_1,\Omega_2,\Omega_3$ be the associated K\"ahler forms, which are given by expressions of the same form as (\ref{equomega}). By the same argument used in the proof of Proposition \ref{equomega}, the forms $\Omega_i$ are closed and the complex structures $\mathcal{J}_i$ are integrable for $i=1,2,3$, giving:
\begin{proposition}
Let $M$ have a transverse Calabi-Yau structure. Then for any principal $G$-bundle $P$ the moduli space $\mathcal{M}^*$ of irreducible ASD contact instantons is a hyperK\"ahler manifold.
\end{proposition}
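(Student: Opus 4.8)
The plan is to carry out the argument of Proposition~\ref{propkahler} for each of the three transverse complex structures $J_1 = J$, $J_2$, $J_3$ in turn, and then to conclude by a purely algebraic fact about metrics that are K\"ahler with respect to a quaternionic triple. By Corollary~\ref{corcy} the moduli space $\mathcal{M}^*$ is already known to be smooth, so only its geometry is at issue.

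First I would record that, since each $J_i$ is hermitian with respect to $g|_H$ and the associated transverse K\"ahler form $\omega_i$ is closed, the transverse metric is transverse K\"ahler for every $J_i$; consequently the transverse K\"ahler identities of \textsection\ref{secholo} and the pointwise identity $*_T\alpha = \tfrac12 J_i\alpha\wedge\omega_i$ on transverse $1$-forms are available for each $i$. Running the proof of Proposition~\ref{prophodge} with $\overline{\partial}_A$ replaced by the $(0,1)$-part of $d_A$ for the complex structure $J_i$ shows that the space $\mathcal{H}^1_{T,A}$ of $D_{T,A}$-harmonic basic $1$-forms is invariant under each $J_i$; hence, under the identifications $\mathcal{H}^1_{T,A} \simeq H^1_{B,A}(\mathfrak{g}_P) \simeq T_{[A]}\mathcal{M}^*$, each $J_i$ induces an almost complex structure $\mathcal{J}_i$ on $\mathcal{M}^*$. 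Because the $J_i$ satisfy the quaternion relations and preserve $g|_H$ fibrewise, the $\mathcal{J}_i$ satisfy the quaternion relations and are hermitian with respect to the $L^2$-metric $h$ of (\ref{metric}); in particular $\mathcal{H}^1_{T,A}$ is a quaternionic vector space, $\dim_{\mathbb{R}}\mathcal{M}^* = 4m$, and $(\mathcal{M}^*, h, \mathcal{J}_1, \mathcal{J}_2, \mathcal{J}_3)$ carries an $Sp(m)$-structure whose fundamental $2$-forms are, exactly as in (\ref{equomega}),
\begin{equation*}
\Omega_{i,A}(\alpha,\beta) = \tfrac12 \int_M B(\alpha\wedge\beta)\wedge\omega_i\wedge\eta .
\end{equation*}

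Next I would show each $\Omega_i$ is closed by repeating verbatim the computation in Proposition~\ref{propkahler}: the Kuranishi charts $f = F_T^{-1}\colon U \to Z_A$ are normal coordinates for $h$ (a fact that involves only $h$), and in these coordinates the $1$-jet of $\Omega_i$ at the origin is constant because the relevant error integral vanishes,
\begin{equation*}
\int_M B(\delta_{T,A}[\lambda_1,x]\wedge\lambda_2)\wedge\omega_i\wedge\eta = \langle G_{T,A}[\lambda_1,x],\, D_{T,A}(J_i\lambda_2)\rangle = 0,
\end{equation*}
since $J_i\lambda_2$ is $D_{T,A}$-harmonic by the transverse K\"ahler identities for $J_i$. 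Thus $d\Omega_i = 0$ and, since the $1$-jets of $h$ and $\Omega_i$ are constant in these coordinates, so is that of $\mathcal{J}_i$, whence $\mathcal{J}_i$ is integrable; so $(\mathcal{M}^*, h, \mathcal{J}_i)$ is K\"ahler for $i=1,2,3$. A Riemannian metric that is K\"ahler with respect to three complex structures obeying the quaternion relations has all three parallel for the Levi-Civita connection, hence is hyperK\"ahler, which proves the proposition. The only point needing genuine care is the first step --- checking that the full transverse K\"ahler package is available for all three $J_i$, so that $\mathcal{H}^1_{T,A}$ really is quaternionic and every error integral above vanishes; once that is secured the argument is just three copies of Proposition~\ref{propkahler} together with a standard algebraic observation, so I do not anticipate a serious obstacle. (By Remark~\ref{remtranscy} such an $M$ is moreover automatically quasi-regular, but the transverse argument sidesteps passing to the underlying orbifold.)
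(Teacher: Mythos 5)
Your proposal is correct and follows essentially the same route as the paper: smoothness from Corollary \ref{corcy}, invariance of the harmonic space $\mathcal{H}^1_T$ under each $J_i$ via the transverse K\"ahler identities, the resulting $Sp(m)$-structure with K\"ahler forms of the shape (\ref{equomega}), closedness and integrability by rerunning the normal-coordinate computation of Proposition \ref{propkahler}, and the standard algebraic fact that a metric K\"ahler for a quaternionic triple is hyperK\"ahler. The only difference is that you spell out the steps the paper compresses into a short paragraph; no gap to report.
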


%%%%%%%%%%%%%%%%%%%%%%%%%%%%%%%%%%%%%%%%%%%%%%%%%%%%%%%%%%%%%%%%%%%%%%%
%%%%%%%%%%%%%%%%%%%%%%%%%%%%%%%%%%%%%%%%%%%%%%%%%%%%%%%%%%%%%%%%%%%%%%%
%%%%%%%%%%%%%%%%%%%%%%%%%%%%%%%%%%%%%%%%%%%%%%%%%%%%%%%%%%%%%%%%%%%%%%%
%%%%%%%%%%%%%%%%%%%%%%%%%%%%%%%%%%%%%%%%%%%%%%%%%%%%%%%%%%%%%%%%%%%%%%%

\section{Transverse index computations}\label{sectrind}

In this section our aim is to compute the dimension of the moduli space of contact instantons. We have seen that for irreducible contact instantons with vanishing $H^2_B(\mathfrak{g}_P)$ this amounts to computing the index of the basic deformation complex (\ref{basicdc}). This complex is elliptic transverse to the foliation $\mathcal{F}$ of $M$ by the Reeb vector field $\xi$. Determining the index of a complex transverse to a foliation is a notoriously difficult problem, so our first step is to replace the foliation by a group action. This still leaves us with a difficult index problem, but one that is tractable in some cases.\\

We take $M$ to be a compact $K$-contact manifold, $P \to M$ a principal $G$-bundle, where $G$ is compact, connected, semisimple and let $\mathfrak{g}_P$ be the adjoint bundle. Let $\nabla$ be a contact instanton on $P$ which for argument's sake will be anti-self-dual. Recall that as $M$ is $K$-contact we have a torus $T^r$ acting on $M$ by isometries, defined as the closure of the $1$-parameter subgroup $\{ exp(t\xi) \}$ generated by $\xi$. Let $G' = G/Z(G)$ and $P' = P/Z(G)$. Then $\mathfrak{g}_P$ is also the adjoint bundle of $P'$ and $\nabla$ descends to a connection on $P'$. If $\nabla$ is irreducible then by Proposition \ref{proptorus} we have that the action of $T^r$ lifts to an action on $P'$ preserving $\nabla$. From this we have an action of $T^r$ on $\Omega^k_H(M,\mathfrak{g}_P)$ and it is clear that the space of $T^r$-invariant sections of this is precisely $\Omega^k_B(M,\mathfrak{g}_P)$.\\

Consider the two-term complex
\begin{equation}\label{equtrans}
\Omega^1_H(M,\mathfrak{g}_P) \buildrel Q \over \longrightarrow \Omega^0_H(M,\mathfrak{g}_P) \oplus \Omega^+_H(M,\mathfrak{g}_P)
\end{equation}
where $Q = (D_T^* , D_T)$. Then $Q$ is equivariant with respect to the $T^r$-action since $T^r$ preserves the connection $\nabla$. The operator $Q$ in (\ref{equtrans}) is a transversally elliptic operator in the sense of \cite{at} with respect to the action of $T^r$, in fact $Q$ is a transverse Dirac operator. Since $Q$ is not elliptic we can not expect $Ker(Q)$ and $Ker(Q^*)$ to be finite dimensional. However, as shown by Atiyah \cite{at} each irreducible representation of $T^r$ occurs in $Ker(Q)$ and $Ker(Q^*)$ with finite multiplicity and the rate of growth of these multiplicities is such that $ind(Q) = Ker(Q) - Ker(Q^*)$ is well-defined as a distribution on $T^r$. Let $Ker(Q)^{T^r},Ker(Q^*)^{T^r}$ be the subspaces fixed by $T^r$. Then $ind(Q)^{T^r} = dim( Ker(Q)^{T^r} ) - dim( Ker(Q^*)^{T^r} )$ is a well-defined integer.
\begin{proposition}
Let $\nabla$ be an irreducible contact instanton with $H^2_B(\mathfrak{g}_P) = 0$. The dimension of $\mathcal{M}$ around $\nabla$ is given by $dim(H^1_B(\mathfrak{g}_P)) = ind(Q)^{T^r}$.
\end{proposition}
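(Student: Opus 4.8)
The plan is to combine two facts. First, by the corollary in Section~\ref{secmodulispace}, the hypotheses that $\nabla$ is irreducible and $H^2_B(\mathfrak{g}_P)=0$ already give that $\mathcal{M}$ is smooth near $\nabla$ with $dim(\mathcal{M})=-ind(\nabla)$; since $\nabla$ is irreducible and $G$ is semisimple, $Aut(\nabla)$ is the finite group $Z(G)$, so $H^0(\mathfrak{g}_P)=0$ and, by the long exact sequence of Proposition~\ref{propcohom}, also $H^0_B(\mathfrak{g}_P)=0$. Hence $-ind(\nabla)=dim\,H^1_B(\mathfrak{g}_P)$, and it remains only to identify $dim\,H^1_B(\mathfrak{g}_P)$ with the integer $ind(Q)^{T^r}$.

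Second, I would obtain this identification by restricting $Q$ to $T^r$-invariant sections. By Proposition~\ref{proptorus}, applied to $P'=P/Z(G)$ (which has the same adjoint bundle $\mathfrak{g}_P$, and to which $\nabla$ descends), the action of $T^r$ lifts to $P'$ preserving $\nabla$, so $Q$ is $T^r$-equivariant; it is moreover transversally elliptic with respect to $T^r$ in the sense of \cite{at}, as already noted. Therefore $Ker(Q)$ and $Ker(Q^*)$ split into $T^r$-isotypic components of finite multiplicity, and by the definition of Atiyah's distributional index the multiplicity of the trivial representation is $ind(Q)^{T^r}=dim(Ker(Q)^{T^r})-dim(Ker(Q^*)^{T^r})$. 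Now the $T^r$-invariant transverse forms are precisely the basic forms, and $D_V$ vanishes on basic forms, so that $D_T$ and $D_T^*$ restrict there to $D_B$ and $D_B^*$; hence the restriction of $Q$ to invariant sections is exactly the rolled-up basic deformation complex $D_B\oplus D_B^*\colon\Omega^1_B(M,\mathfrak{g}_P)\to\Omega^0_B(M,\mathfrak{g}_P)\oplus\Omega^+_B(M,\mathfrak{g}_P)$.

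It then remains to compute the two invariant kernels, which is a short Hodge-theoretic argument. On the one hand $Ker(Q)^{T^r}=\{\alpha\in\Omega^1_B(M,\mathfrak{g}_P)\mid D_B\alpha=D_B^*\alpha=0\}\simeq H^1_B(\mathfrak{g}_P)$. On the other hand, a pair $(f,\beta)\in\Omega^0_B(M,\mathfrak{g}_P)\oplus\Omega^+_B(M,\mathfrak{g}_P)$ lies in $Ker(Q^*)^{T^r}$ iff $D_Bf+D_B^*\beta=0$; applying $D_B$ and using that $D_B^2=0$ on basic forms (since $D_V=0$ there) gives $D_BD_B^*\beta=0$, hence $D_B^*\beta=0$ and then $D_Bf=0$, and conversely any such pair clearly lies in the kernel, so $Ker(Q^*)^{T^r}\simeq H^0_B(\mathfrak{g}_P)\oplus H^2_B(\mathfrak{g}_P)$. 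Consequently $ind(Q)^{T^r}=dim\,H^1_B(\mathfrak{g}_P)-dim\,H^0_B(\mathfrak{g}_P)-dim\,H^2_B(\mathfrak{g}_P)=-ind(\nabla)=dim\,H^1_B(\mathfrak{g}_P)$, using $H^0_B(\mathfrak{g}_P)=H^2_B(\mathfrak{g}_P)=0$. Combined with the first step this gives $dim(\mathcal{M})=dim\,H^1_B(\mathfrak{g}_P)=ind(Q)^{T^r}$ in a neighbourhood of $\nabla$.

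I expect the main obstacle to be the middle step: justifying carefully, within Atiyah's framework for transversally elliptic operators, that the coefficient of the trivial character in the distributional index of $Q$ is computed by the genuinely finite-dimensional smooth invariant kernels, and that restriction to $T^r$-invariant sections really does turn $Q$ into the basic complex --- in particular that the relevant regularity is compatible with the Sobolev completions used to define $ind(Q)$ and that the notion of a basic section is unaffected by those completions. The remaining ingredients --- the vanishings $H^0_B=0$ and $H^2_B=0$, the identity $D_B^2=0$ on basic forms, and the two kernel computations --- are routine.
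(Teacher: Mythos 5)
Your argument is correct and follows essentially the same route as the paper: the paper's proof likewise identifies $Ker(Q)^{T^r}$ with the $\Delta_T$-harmonic space $\mathcal{H}^1_T\simeq H^1_B(\mathfrak{g}_P)$ and $Ker(Q^*)^{T^r}$ with $\mathcal{H}^0_T\oplus\mathcal{H}^2_T$, which vanishes by irreducibility and the hypothesis $H^2_B(\mathfrak{g}_P)=0$. The extra details you supply (the vanishing of $H^0_B(\mathfrak{g}_P)$, the integration-by-parts step for $Ker(Q^*)^{T^r}$, and the identification of invariant sections with basic ones) are exactly the points the paper compresses into ``clearly'' and ``in a similar fashion,'' and the Atiyah-framework issue you flag is already handled by the discussion preceding the proposition, where $ind(Q)^{T^r}$ is defined as the difference of the finite-dimensional invariant kernels.
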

\begin{proof}
Clearly $Ker(Q)^{T^r} = \{ \alpha \in \Omega^1_B(M,\mathfrak{g}_P) \, | \, D_T \alpha = D_T^* \alpha = 0 \} = \mathcal{H}_T^1 \simeq H_B^1(\mathfrak{g}_P)$. In a similar fashion we find $Ker(Q^*)^{T^r} \simeq \mathcal{H}^0_T \oplus \mathcal{H}^2_T = 0$.
\end{proof}
Following \cite{at} we let $T_{T^r}M$ denote the subset of $TM$ consisting of tangent vectors orthogonal to the action of $T^r$. This is typically not a vector bundle, but there is still a natural projection map $\pi \colon  T_{T^r}M \to M$. Transverse ellipticity of $Q$ ensures that the symbol $\sigma_Q(\lambda)$ of $Q$ is an isomorphism for all $\lambda \in T_{T^r}M$ away from the zero section. The symbol complex
\begin{equation}\label{equsymcom}
\pi^*(\mathfrak{g}_P \otimes H^*) \buildrel \sigma_Q(\lambda) \over \longrightarrow \pi^*(\mathfrak{g}_P \otimes (  \mathbb{R} \oplus \wedge^+ H^* ))
\end{equation}
of (\ref{equtrans}) then defines an element in $K_{T^r}( T_{T^r} M )$, the equivariant $K$-theory group of the space $T_{T^r}M$. It is shown in \cite{at} that the index $ind(Q)$ depends only on the $K$-theory class of the symbol complex (\ref{equsymcom}) in $K_{T^r}( T_{T^r} M )$. Moreover the symbol complex is completely determined by the contact structure $H \subset TM$ and the lifted action of $T^r$ on the adjoint bundle $\mathfrak{g}_P$. The problem of computing the dimension of $\mathcal{M}$ at a smooth point has been reduced to the following comparatively simpler problem: given a principal $G$-bundle $P$ and a lift of $T^r$ to $P' = P/Z(G)$, evaluate the index of (\ref{equsymcom}).

%%%%%%%%%%%%%%%%%%%%%%%%%%%%%%%%%%%%%%%%%%%%%%%%%%%

\subsection{Quasi-regular case}\label{secqreg}

Suppose that $M$ is quasi-regular. In this case $\xi$ must generate a $1$-dimensional circle action on $M$ and the orbits of $\xi$ are the orbits of the circle action. The quotient $X = M/U(1)$ inherits the structure of a symplectic orbifold with cyclic uniformizing groups and as in Example \ref{exorbi}, $M$ is the total space of an orbifold principal circle bundle over $X$. The quotient map $\pi \colon  M \to X$ is a Seifert fibration. \\

Let $P \to M$ be a principal $G$-bundle on $M$ with a lift of the $T^r = U(1)$-action on $M$ to the adjoint bundle $\mathfrak{g}_P$. Then the symbol complex (\ref{equsymcom}) is defined and has a $U(1)$-equivariant index $ind(P)$. Let $ind(P)^{U(1)} \in \mathbb{Z}$ denote the $U(1)$-invariant part of $ind(P)$. Note that $\mathfrak{g}_P$ is an orbifold vector bundle on $X$ and that adjoint-valued basic differential forms on $M$ correspond to adjoint-valued differential forms on $X$. This gives an identification $\Omega^*_B(M,\mathfrak{g}_P) = \Omega^*(X,\mathfrak{g}_P)$ under which the basic deformation complex on $M$ becomes
\begin{equation}\label{equorbcpx}
0 \to \Omega^0(X,\mathfrak{g}_P) \buildrel D \over \longrightarrow \Omega^1(X,\mathfrak{g}_P) \buildrel D \over \longrightarrow \Omega^+(X,\mathfrak{g}_P) \to 0.
\end{equation}
This is an elliptic complex on $X$ and it is clear that $ind(P)^{U(1)}$ is the index of (\ref{equorbcpx}). The index $ind(P)^{U(1)}$ may therefore be computed by orbifold index theory. To simplify matters we will restrict to the special case where $M$ is Sasakian and $X$ has isolated singularities. Assume that $P$ admits an ASD contact instanton $A$. Then since $A$ is a transverse connection it can be regarded as a connection on the orbifold bundle $\mathfrak{g}_P$. Since $M$ is Sasakian, $X$ is a complex orbifold and the curvature of $A$ is of type $(1,1)$. This allows us to define a $\overline{\partial}$-operator $\overline{\partial}_A$ on $(\mathfrak{g}_P)_{\mathbb{C}}$ and we have the Dolbeault complex
\begin{equation}\label{equdolb}
0 \to \Omega^{0,0}(X,\mathfrak{g}_P) \buildrel \overline{\partial}_A \over \longrightarrow \Omega^{0,1}(X,\mathfrak{g}_P) \buildrel \overline{\partial}_A \over \longrightarrow \Omega^{0,2}(X,\mathfrak{g}_P) \to 0.
\end{equation}
Let $ind(\overline{\partial}_A) = \sum_i (-1)^i dim_{\mathbb{C}} (H^i_{\overline{\partial}_A})$ be the index of this complex. Then from Proposition \ref{prophodge} we have $ind(P)^{U(1)} = 2 \, ind(\overline{\partial}_A)$, so it remains to determine $ind(\overline{\partial}_A)$.\\

Let $x_1,\dots , x_k$ be the singular points of $X$ with uniformizing groups $\Gamma_{x_j} = \mathbb{Z}_{m_j}$ cyclic of order $m_j$. Around $x_j$ we can find an orbifold chart of the form $U_j / \mathbb{Z}_{m_j}$, where $U_j$ is a connected open subset of $\mathbb{C}^2$ containing the origin. For $k \in \mathbb{Z}_{m_j}$ we let $g_k \colon  \mathbb{C}^2 \to \mathbb{C}^2$ be the corresponding linear transformation of $\mathbb{C}^2$. We may assume this action of $\mathbb{Z}_{m_j}$ on $\mathbb{C}^2$ is of the form $g_k(z_1,z_2) = (\zeta^k z_1 , \zeta^{w_j k} z_2)$, where $\zeta_j = {\rm exp}(2\pi i/m_j)$, $0 < w_j < m_j$ is coprime to $m_j$ and $k \in \mathbb{Z}_{m_j}$. We have a real representation of $\mathbb{Z}_{m_j}$ on the fibre of $\mathfrak{g}_P$ over $x_j$ and we let $\chi_{\mathfrak{g}_P,j} \colon  \mathbb{Z}_{m_j} \to \mathbb{R}$ denote the character of this representation.
\begin{proposition}
The index $ind(\overline{\partial}_A)$ of the Dolbeault complex (\ref{equdolb}) is given by:
\begin{equation}\label{equindex1}
ind(\overline{\partial}_A) = p_{1,B}( \mathfrak{g}_P )[X] + \frac{1}{2}dim(G)( 1 - b^1_B(M) + b^+_B(M) ) + \sum_j \frac{1}{m_j} \sum_{k=1}^{m_j-1} \frac{ \chi_{\mathfrak{g}_P,j}(k) - dim(G)  }{ det(1 - g_k |_{T^*_{x_j} X } ) }
\end{equation}
where $b^1_B(M) = dim( H^1_B(M) )$, $b^+_B(M) = dim( H^+_B(M) )$ and $p_{1,B}( \mathfrak{g}_P )[X]$ is the basic Pontryagin class of $\mathfrak{g}_P$ integrated over $X$.
\end{proposition}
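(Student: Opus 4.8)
The plan is to compute $\mathrm{ind}(\overline{\partial}_A)$ by applying the orbifold Riemann–Roch / Kawasaki index theorem (or equivalently the $\overline{\partial}$-version of the $V$-manifold index theorem) to the Dolbeault complex (\ref{equdolb}) twisted by the orbifold bundle $(\mathfrak{g}_P)_{\mathbb C}$ on the complex orbifold $X$. Since $X$ has only isolated singular points $x_1,\dots,x_k$ with cyclic uniformizing groups $\mathbb Z_{m_j}$, Kawasaki's formula expresses the index as a sum of the "regular" contribution — the Atiyah–Singer integral over the underlying space $X$ — plus a finite sum of local contributions from the twisted sectors, one for each nontrivial $g_k \in \mathbb Z_{m_j}$, each of which is a ratio of an equivariant Chern character of $\mathfrak{g}_P$ at $x_j$ over the equivariant Euler class $\det(1-g_k|_{T^*_{x_j}X})$ of the normal (here, full tangent) space.

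First I would write down the regular part. For the Dolbeault complex twisted by a bundle $E$ on a complex surface one has $\mathrm{ind}(\overline{\partial}_E) = \int_X \mathrm{ch}(E)\,\mathrm{Td}(X)$; expanding in degrees $0$, $2$, $4$ and using $\mathrm{ch}(\mathfrak{g}_P) = \dim(G) + c_1(\mathfrak{g}_P) + \tfrac12(c_1^2 - 2c_2)(\mathfrak{g}_P)$ together with $c_1(\mathfrak{g}_P)=0$ (the adjoint bundle of a semisimple group has vanishing first Chern class) and $-c_2(\mathfrak{g}_P) = \tfrac12 p_1(\mathfrak{g}_P)$ in an appropriate normalization, the degree-$4$ term becomes $p_{1,B}(\mathfrak{g}_P)[X]$ (here one must translate between integrals of basic characteristic forms over $M$ relative to $\eta$ and integrals of orbifold characteristic forms over $X$, which is exactly the identification $\Omega^*_B(M,\mathfrak{g}_P)=\Omega^*(X,\mathfrak{g}_P)$ already established). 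The remaining degree-$0$ term is $\dim(G)\cdot \mathrm{Td}(X)[X]$-type contribution; I would recognize $\chi(\mathcal O_X) = \mathrm{Td}(X)[X]$ for the underlying space and then rewrite $\dim(G)\,\chi(\mathcal O_X)$ in terms of basic Betti numbers. Using Proposition \ref{prophodge} and the Sasaki analogue of Hodge theory, the $\overline{\partial}$-cohomology of the trivial bundle computes $h^{0,0}_B - h^{0,1}_B + h^{0,2}_B$, and by the transverse Hodge decomposition this equals $1 - \tfrac12 b^1_B(M) + (\text{a } (0,2)\text{ piece})$; matching against the self-dual basic second cohomology $b^+_B(M)$ via $\Omega^+_B = \Omega^{2,0}_B \oplus (\Omega^0_B\wedge\omega)\oplus\Omega^{0,2}_B$ yields the coefficient $\tfrac12\dim(G)(1 - b^1_B(M) + b^+_B(M))$. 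This bookkeeping — carefully relating $\chi(\mathcal O_X)$ to the basic Betti numbers $b^1_B(M), b^+_B(M)$ of $M$ — is the step I expect to be the main obstacle, since it requires tracking the transverse Dolbeault decomposition precisely rather than appealing to a black-box formula.

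Second I would handle the singular contributions. At $x_j$, the group $\mathbb Z_{m_j}$ acts on $T_{x_j}X \cong \mathbb C^2$ by $g_k = \mathrm{diag}(\zeta_j^k, \zeta_j^{w_j k})$ and on the fibre $(\mathfrak{g}_P)_{x_j}$ through a representation with real character $\chi_{\mathfrak{g}_P,j}$. The local term in Kawasaki's theorem for the $\overline{\partial}$-complex twisted by $E$ is $\frac{1}{m_j}\sum_{k=1}^{m_j-1} \frac{\mathrm{tr}(g_k | E_{x_j})}{\det_{\mathbb C}(1 - g_k^{-1} | T_{x_j}X)}$ evaluated appropriately; using that $\det(1 - g_k|_{T^*_{x_j}X}) = \overline{\det(1-g_k|_{T_{x_j}X})}$ and that these quantities combine over conjugate elements, together with the relation $\mathrm{ind}(P)^{U(1)} = 2\,\mathrm{ind}(\overline{\partial}_A)$ from Proposition \ref{prophodge}, one sees that the complexified character contributes $\chi_{\mathfrak{g}_P,j}(k)$ while the trivial-bundle subtraction (coming from the $\dim(G)\,\chi(\mathcal O_X)$ piece, whose own orbifold correction must be removed to avoid double counting) contributes $-\dim(G)$, giving exactly the summand $\frac{1}{m_j}\sum_{k=1}^{m_j-1}\frac{\chi_{\mathfrak{g}_P,j}(k) - \dim(G)}{\det(1-g_k|_{T^*_{x_j}X})}$. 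Finally I would collect the regular part and the sum of local defects to obtain (\ref{equindex1}); the only remaining care is a sign/normalization check in identifying $p_{1,B}(\mathfrak{g}_P)[X]$ and in confirming that the "$-\dim(G)$" correction has the right sign, which I would verify on the test case of the trivial bundle $\mathfrak{g}_P = X\times\mathfrak g$ where the formula must reduce to $\dim(G)\cdot\chi(\mathcal O_X)$ computed by orbifold Riemann–Roch for $\mathcal O_X$.
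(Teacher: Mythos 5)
Your proposal is correct and follows essentially the same route as the paper: a first application of Kawasaki's orbifold Riemann--Roch to the $\mathfrak{g}_P$-twisted Dolbeault complex (with the degree-$4$ integrand giving $p_{1,B}(\mathfrak{g}_P)+\dim(G)\,Td(X)$), followed by a second application to $\mathcal{O}_X$ to trade $\dim(G)\int_X Td(X)$ for $\dim(G)\bigl(h^{0,0}-h^{0,1}+h^{0,2}\bigr)=\tfrac12\dim(G)\bigl(1-b^1_B(M)+b^+_B(M)\bigr)$, which is exactly what produces the $-\dim(G)$ in the numerators of the local terms. The Hodge-theoretic identification you flag as the main obstacle is handled in the paper exactly as you sketch it, via Proposition \ref{prophodge} and the decomposition of $\Omega^+_B$.
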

\begin{remark}
We have assumed $P$ admits a contact instanton in order to define the Dolbeault complex, however this assumption is not necessary for the index computation. In general one arrives at the same result $ind(P)^{U(1)} = 2 \, ind(\overline{\partial}_A)$, where $ind(\overline{\partial}_A)$ is given by (\ref{equindex1}).
\end{remark}
\begin{proof}
Since $X$ has only isolated orbifold singularities the orbifold Riemann-Roch theorem gives \cite{kaw2}
\begin{equation*}
ind(\overline{\partial}_A) = \int_X Ch((\mathfrak{g}_P)_{\mathbb{C}}) Td(X) + \sum_j \frac{1}{m_j} \sum_{k=1}^{m_j-1} \frac{ \chi_{\mathfrak{g}_P,j}(k)  }{ det(1 - g_k |_{T^*_{x_j} X } ) }.
\end{equation*}
The degree $4$ component of the integrand is $p_{1,B}(\mathfrak{g}_P) + dim(G)Td(X)$. A second application of orbifold Riemann-Roch gives
\begin{equation*}
h^{0,0}(X) - h^{0,1}(X) + h^{0,2}(X) = \int_X Td(X) + \sum_j \frac{1}{m_j} \sum_{k=1}^{m_j-1} \frac{1}{ det(1 - g_k |_{T^*_{x_j} X } ) },
\end{equation*}
where $h^{p,q}(X)$ denotes the orbifold Hodge numbers of $X$. Combining these and using $h^{0,0}(X) - h^{0,1}(X) + h^{0,2}(X) = \frac{1}{2}(1 -b^1_B(M) + b^+_B(M))$ we arrive at (\ref{equindex1}).
\end{proof}
Suppose that the action of $\mathbb{Z}_{m_j}$ on $(\mathfrak{g}_P)_{x_j}$ has weights $u_l$, for $l = 1, \dots , dim(G)$. Then $\chi_{\mathfrak{g}_P,j}(k) - dim(G) = \sum_l (\zeta^{ku_l} - 1) = -2 \sum_l \sin^2( \frac{\pi k u_l }{m_j} )$ and we may rewrite (\ref{equindex1}) as
\begin{equation*}
\begin{aligned}
ind(\overline{\partial}_A) &= p_{1,B}( \mathfrak{g}_P )[X] + \frac{1}{2}dim(G)( 1 - b^1_B(M) + b^+_B(M) ) \\
& \; \; \; \; -\frac{1}{2} \sum_j \frac{1}{m_j} \sum_{k=1}^{m_j-1} \sum_{l=1}^{dim(G)} \sin^2 \left( \frac{\pi k u_l }{m_j} \right)\left( 1 -  \cot \left( \frac{\pi k}{m_j} \right) \cot \left( \frac{\pi k w_j}{m_j} \right) \right).
\end{aligned}
\end{equation*}
In order to compute the index one still needs to integrate $p_{1,B}( \mathfrak{g}_P )$ over $X$. Let us see how this can be done in the special case where $P$ is the principal $SO(3)$-bundle associated to $\wedge^- H^*$, for which $\mathfrak{g}_P = \wedge^- H^*$.
\begin{proposition}\label{propindex2}
When $\mathfrak{g}_P = \wedge^- H^*$, we have
\begin{equation}\label{equindex2}
ind(\overline{\partial}_A) = \frac{5}{4}( 3\tau_B(M) - \chi_B(M) ) + \sum_j \left( 2 - \frac{w_j + w'_j}{m_j} + 12 s(w_j ; m_j) \right),
\end{equation}
where $w'_j$ is the unique integer $0 < w'_j < m_j$ with $w_j w'_j = 1 ({\rm mod} \; m_j)$, $\tau_B(M)$ is the basic signature of $M$, $\chi_B(M)$ the basic Euler characteristic and $s(w_j ; m_j)$ is the Dedekind sum \cite{ap}
\begin{equation*}
s(w_j;m_j) = \frac{1}{4m_j} \sum_{k=1}^{m_j-1} \cot \left( \frac{\pi k}{m_j} \right) \cot \left( \frac{\pi k w_j}{m_j} \right).
\end{equation*}
\end{proposition}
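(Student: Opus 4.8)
The plan is to specialise the orbifold index formula (\ref{equindex1}) to $G = SO(3)$, so $\dim G = 3$, and $\mathfrak{g}_P = \wedge^- H^*$, and then to evaluate separately the three ingredients that appear there: the basic Pontryagin number $p_{1,B}(\wedge^- H^*)[X]$, the term $\tfrac{3}{2}\bigl(1 - b^1_B(M) + b^+_B(M)\bigr)$, and the sum of equivariant contributions at the singular points $x_j$. Adding the three and sorting the purely topological part against the $x_j$-local part should reproduce the right-hand side of (\ref{equindex2}).

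First I would dispose of the equivariant sum. Since $M$ is Sasakian, $X$ is a complex orbifold and $(\wedge^- H^*)\otimes\mathbb{C}$ is the bundle of primitive transverse $(1,1)$-forms. In the uniformising coordinates $(z_1,z_2)$ at $x_j$ on which $g_k$ acts by $(\zeta^k z_1,\zeta^{w_j k}z_2)$ (with $\zeta = \zeta_j$), one checks that $g_k$ acts on the fibre $(\wedge^- T^*_{x_j}X)\otimes\mathbb{C} = \bigl\langle\, dz_1\wedge d\bar z_1 - dz_2\wedge d\bar z_2,\ dz_1\wedge d\bar z_2,\ dz_2\wedge d\bar z_1\,\bigr\rangle$ with eigenvalues $1,\ \zeta^{(w_j-1)k},\ \zeta^{-(w_j-1)k}$, so that $\chi_{\mathfrak{g}_P,j}(k) - 3 = \zeta^{(w_j-1)k} + \zeta^{-(w_j-1)k} - 2$. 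Using $\zeta^{(w_j-1)k} - 1 = \zeta^{-k}\bigl((1-\zeta^k) - (1-\zeta^{w_j k})\bigr)$ one obtains
\[
\frac{\zeta^{(w_j-1)k} + \zeta^{-(w_j-1)k} - 2}{(1-\zeta^{-k})(1-\zeta^{-w_j k})}
= \frac{1-\zeta^{k}}{1-\zeta^{w_j k}} + \frac{1-\zeta^{w_j k}}{1-\zeta^{k}} - 2 .
\]
Summing over $k = 1,\dots,m_j-1$ then reduces to the elementary identity $\sum_{k=1}^{m_j-1}\tfrac{1-\zeta^{w_j k}}{1-\zeta^{k}} = m_j - w_j$ (expand the geometric series and use orthogonality of the $m_j$-th roots of unity), together with the substitution $k\mapsto w'_j k$ for the other term; this gives $\tfrac{1}{m_j}\sum_{k=1}^{m_j-1}\tfrac{\chi_{\mathfrak{g}_P,j}(k)-3}{\det(1-g_k|_{T^*_{x_j}X})} = \tfrac{2 - w_j - w'_j}{m_j}$.

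For the Pontryagin term I would use the pointwise identity $p_1(\wedge^- T^*X) = c_1(TX)^2 - 4c_2(TX)$, valid on any complex surface (compute $c_2$ of $\Omega^1_X\otimes\overline{\Omega^1_X}$ via Chern roots) and hence on $X$, so that $p_{1,B}(\wedge^- H^*)[X] = \int_X c_1^2 - 4\int_X c_2$. Orbifold Gauss--Bonnet gives $\int_X c_2 = \chi_{\mathrm{orb}}(X) = \chi_B(M) - \sum_j(1 - 1/m_j)$, and $\int_X c_1^2 = \int_X p_1(TX) + 2\int_X c_2$; the orbifold signature theorem on $X$, with $\mathrm{sign}(X) = \tau_B(M)$ and the signature defect of $\mathbb{C}^2/\mathbb{Z}_{m_j}$ with weights $(1,w_j)$ equal to $-4\,s(w_j;m_j)$, gives $\int_X p_1(TX) = 3\tau_B(M) + 12\sum_j s(w_j;m_j)$. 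Hence $p_{1,B}(\wedge^- H^*)[X] = 3\tau_B(M) - 2\chi_B(M) + 2\sum_j(1 - 1/m_j) + 12\sum_j s(w_j;m_j)$. The term $\tfrac{3}{2}\bigl(1 - b^1_B + b^+_B\bigr)$ is purely topological: Poincar\'e duality on basic cohomology ($b^0_B = b^4_B = 1$, $b^3_B = b^1_B$), together with $b^2_B = b^+_B + b^-_B$ and $\tau_B = b^+_B - b^-_B$, gives $\tfrac{3}{2}\bigl(1 - b^1_B + b^+_B\bigr) = \tfrac{3}{4}\bigl(\chi_B(M) + \tau_B(M)\bigr)$.

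Adding the three contributions, the topological terms combine as $(3\tau_B - 2\chi_B) + \tfrac34(\chi_B + \tau_B) = \tfrac54(3\tau_B - \chi_B)$, while the $x_j$-local terms combine as $12\,s(w_j;m_j) + 2(1 - 1/m_j) + (2 - w_j - w'_j)/m_j = 2 - (w_j + w'_j)/m_j + 12\,s(w_j;m_j)$, which is (\ref{equindex2}). I expect the main obstacle to be the Pontryagin computation: passing from the Chern--Weil quantity $p_{1,B}(\wedge^- H^*)[X]$ to the topological invariants $\tau_B(M),\chi_B(M)$ plus Dedekind-sum corrections forces one to invoke the transverse/orbifold signature theorem and to fix carefully the sign conventions for the signature defect of an isolated cyclic quotient singularity and its identification with $s(w_j;m_j)$. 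The equivariant sum, by contrast, though it looks forbidding, collapses to something elementary once rewritten in terms of roots of unity.
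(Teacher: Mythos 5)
Your proposal is correct and follows essentially the same route as the paper's proof: specialise (\ref{equindex1}) to $\mathfrak{g}_P=\wedge^-H^*$, reduce the local equivariant sums to the elementary root-of-unity identities yielding $(2-w_j-w'_j)/m_j$, and convert $p_{1,B}(\wedge^-H^*)[X]$ via the orbifold signature and Gauss--Bonnet theorems (your $c_1^2-4c_2$ is the paper's $p_{1,B}(H)-2e_B(H)$). The only differences are cosmetic rearrangements of the same algebra.
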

\begin{proof}
For $\mathfrak{g}_P = \wedge^- H^*$ we have $\chi_{\mathfrak{g}_P,j}(k) = 1 + \zeta^{k(w_j-1)} + \zeta^{-k(w_j-1)}$. Then
\begin{equation*}
\begin{aligned}
\sum_{k=1}^{m_j-1} \frac{ \chi_{\mathfrak{g}_P,j}(k) - dim(G) }{ det(1 - g_k |_{T^*_{x_j} X } ) } &= \sum_{k=1}^{m_j-1} \frac{ \zeta^{k(w_j-1)} + \zeta^{-k(w_j-1)} - 2}{(1-\zeta^{-k})(1-\zeta^{-kw_j})} \\
&= \sum_{k=1}^{m_j-1} \frac{ \zeta^{2w_jk} + \zeta^{2k} - 2}{(1 - \zeta^k)(1-\zeta^{kw_j})} \\
&= - \sum_{k=1}^{m_j-1} \left( \frac{1+\zeta^{kw_j}}{1-\zeta^k} + \frac{1+\zeta^k}{1-\zeta^{kw_j}} \right).
\end{aligned}
\end{equation*}
An elementary computation shows that
\begin{align*}
\sum_{k=1}^{m_j-1} \frac{1+\zeta^{kw_j}}{1-\zeta^k} &= w_j-1, & \sum_{k=1}^{m_j-1} \frac{1+\zeta^{k}}{1-\zeta^{kw_j}} &= w'_j - 1.
\end{align*}
Substituting into (\ref{equindex1}) we have
\begin{equation}\label{equindexpart}
ind(\overline{\partial}_A) = p_{1,B}( \wedge^- H^* )[X] +  \frac{3}{2}( 1 - b^1_B(M) + b^+_B(M) ) + \sum_j \frac{2 - w_j - w'_j}{m_j}.
\end{equation}
Moreover the basic Pontryagin class of $\wedge^- H^*$ is given by
\begin{equation*}
p_{1,B}(\wedge^- H^*) = p_{1,B}( H ) - 2 e_B( H ),
\end{equation*}
where $p_{1,B}(H)$ is the basic Pontryagin class and $e_B(H)$ the basic Euler class of $H$. By the orbifold signature theorem \cite{kaw1} we have
\begin{equation*}
\begin{aligned}
\tau_B(M) &= \frac{1}{3} p_{1,B}(H)[X] + \sum_j \frac{1}{m_j} \sum_{k=1}^{m_j-1} \frac{( 1 + \zeta^{-k})(1+\zeta^{-kw_j})}{(1-\zeta^{-k})(1-\zeta^{-kw_j})} \\
&= \frac{1}{3} p_{1,B}(H)[X] - \sum_j 4 s(w_j ; m_j).
\end{aligned}
\end{equation*}
and by the orbifold Gauss-Bonnet theorem \cite{kaw2}
\begin{equation*}
\chi_B(M) = e_B(H)[X] + \sum_j \left( 1 - \frac{1}{m_j} \right).
\end{equation*}
Substituting into (\ref{equindexpart}) and using $(1 - b^1_B(M) + b^+_B(M) ) = \frac{1}{2}( \chi_B(M) + \tau_B(M))$ we obtain (\ref{equindex2}).
\end{proof}
\begin{remark}
The Dedekind sums $s(w;m)$ have the property that $6ms(w;m) \in \mathbb{Z}$ and $12wm s(w;m) = w^2 + 1 ({\rm mod} \; m)$ \cite{ap}. This implies that each summand $2 - \frac{w_j + w'_j}{m_j} + 12 s(w_j ; m_j)$ in (\ref{equindex2}) is an integer.  
\end{remark}

\begin{example}
Consider the case where $M$ is a compact Sasaki $5$-manifold with transverse Calabi-Yau structure. As in Remark \ref{remtranscy}, we have that $M$ is quasi-regular with leaf space $X$ a Calabi-Yau orbifold. Furthermore it can be shown that $X$ has no branch divisor \cite{bg}, meaning that it only has orbifold singularities of codimension $2$. Since $X$ has complex dimension $2$ this means that the orbifold singularities are isolated and since $X$ is Calabi-Yau, the action of the uniformizing groups $\mathbb{Z}_{m_j}$ must be of the form $g_k(z_1,z_2) = (\zeta^k z_1 , \zeta^{- k} z_2)$. This is a du Val singularity of type $A_{m_j-1}$ and we note that $w_j = w'_j = m_j-1$. Suppose also that $M$ is simply-connected, then $\tau_B(M) = 4 - h^{1,1}(X)$ and $\chi_B(M) = 4 + h^{1,1}(X)$. We remark also that in this case one can show that $M$ is diffeomorphic to a connected sum of $b^2(M) = h^{1,1}(X)+1$ copies of $S^2 \times S^3$ \cite{bg}. Examples of such $M$ are given by taking $X$ to be an orbifold $K3$-surface, in particular such orbifolds were constructed by Reid from hypersurfaces in weighted projective spaces \cite{rei} (see also \cite{bg}). Such an $M$ has a transverse Einstein structure, hence the transverse Levi-Civita connection on $\wedge^- H^*$ is anti-self-dual.\\

From Proposition \ref{propindex2} we obtain the index
\begin{equation}\label{equpartindex}
-ind(\overline{\partial}_A) = 5 h^{1,1}(X) - 10 + \sum_j (m_j-3).
\end{equation}
We can simplify this further by noting that since $X$ is Calabi-Yau, the Levi-Civita connection on $\wedge^+ H^*$ is flat giving $p_{1,B}(H)[X] + 2e_B(H)[X] = 0$. Using the signature and Gauss-Bonnet theorems as in the proof of Proposition \ref{propindex2} gives $20 - h^{1,1} = \sum_j (m_j - 1)$. Combining with (\ref{equpartindex}) gives:
\begin{equation*}
-ind(\overline{\partial}_A) = 90 - 2 \sum_j (2m_j-1).
\end{equation*}
Next we seek conditions under which the Levi-Civita connection on $\wedge^- T^*X$ is irreducible. Suppose to the contrary that it is reducible. Then the Levi-Civita connection on $X$ reduces to a $U(1)$-connection. This means that $R^-_-$, the $\wedge^- T^*X \otimes \wedge^- T^*X$-part of the curvature has rank $\le 1$. Moreover the trace of $R^-_-$ is $1/4$ times the scalar curvature, which is zero since $X$ is Calabi-Yau. So in fact the Levi-Civita connection is flat on $\wedge^- T^*X$ and it follows that $X$ is flat. In this case Gauss-Bonnet gives $\chi_B(M) = 4 + h^{1,1}(X) = \sum_j (m_j-1)/m_j$. However we have already shown that $20-h^{1,1}(X) = \sum_j (m_j-1)$. Combining these gives
\begin{equation}\label{equflatcond}
\sum_j \frac{m_j^2 - 1}{m_j} = 24.
\end{equation}
If (\ref{equflatcond}) does not hold then the Levi-Civita connection $\nabla$ on $\wedge^- H^*$ is irreducible. In such a case $\nabla$ belongs to a smooth moduli space of contact instantons forming a hyperK\"ahler manifold of complex dimension equal to $-ind(\overline{\partial}_A) = 90 - 2 \sum_j (2m_j-1)$. To give just one specific example from \cite{rei}, a generic hypersurface $X$ of degree $42$ in the weighted projective space $\mathbb{CP}(1,6,14,21)$ gives an orbifold $K3$-surface with three singularities of types $A_1,A_2,A_6$, giving $-ind(\overline{\partial}_A) = 90 - 2(3+5+13) = 48$. Since (\ref{equflatcond}) is not satisfied this gives a hyperK\"ahler moduli space of complex dimension $48$.
\end{example}

%%%%%%%%%%%%%%%%%%%%%%%%%%%%%%%%%%%%%%%

\subsection{$Y^{p,q}$ spaces}\label{secypq}

For irregular $K$-contact manifolds, the computation of the transverse index is generally difficult. One case where the index becomes computable is when $M$ is a toric Sasaki $5$-manifold. For sake of definiteness we will carry out the index computation for a class of irregular toric Sasaki-Einstein $5$-metrics on $S^2 \times S^3$, the $Y^{p,q}$ spaces. The method of computation could equally be applied to any compact toric Sasaki $5$-manifold.\\

For each pair of coprime integers $p,q$ with $p > q$ the space $Y^{p,q}$ is a toric Sasaki-Einstein metric on $S^2 \times S^3$ \cite{gmsw}. The Sasakian structure on $Y^{p,q}$ is quasi-regular whenever $4p^2 - 3q^2$ is a perfect square, otherwise $Y^{p,q}$ is irregular of rank $2$. Since $Y^{p,q}$ is Sasaki-Einstein the transverse Levi-Civita connection $\overline{\nabla}$ on $\wedge^{-} H^*$ is an ASD contact instanton. We will show that $\overline{\nabla}$ is irreducible, except in the case $(p,q) = (1,0)$, and we compute the dimension of the moduli space of contact instantons around $\overline{\nabla}$ in the irregular case.

\begin{proposition}\label{propypqirred}
Let $M = Y^{p,q}$. The transverse Levi-Civita connection on $\wedge^- H^*$ is irreducible provided $(p,q) \neq (1,0)$.
\end{proposition}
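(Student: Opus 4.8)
The plan is to argue by contradiction in the spirit of the Calabi--Yau example preceding the proposition. Suppose the transverse Levi-Civita connection $\overline{\nabla}$ on $\wedge^- H^*$ is reducible. Since $\wedge^- H^*$ carries the structure group $SO(3)$, a reduction means the transverse holonomy lies in a proper closed subgroup, which forces the transverse Levi-Civita connection on the rank-$4$ bundle $H$ itself to reduce its structure group from $SO(4)$. Concretely, $\wedge^- H^*$ splitting off a flat (or reducible) summand means the $\wedge^- H^* \otimes \wedge^- H^*$-block of the transverse curvature, $W_T^- + \tfrac{s_T}{12}$ in the notation of $(\ref{equcurvdecom})$, has rank $\le 1$ as a symmetric endomorphism of $\wedge^- H^*$. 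Here I would use that $Y^{p,q}$ is Sasaki--Einstein, so it is transverse Einstein and $B_T = 0$; moreover for a Sasaki--Einstein $5$-manifold the transverse scalar curvature $s_T$ is a positive constant (normalised by the Einstein condition $Ric = 4g$, giving $Ric_T = Ric + \tfrac12 g$, hence $s_T = s + 1 = 20 + 1$, in any case strictly positive). Then $\mathrm{tr}(W_T^- + \tfrac{s_T}{12}) = \tfrac{s_T}{4} > 0$, so the rank-$\le 1$ symmetric operator must be $\tfrac{s_T}{4}$ times a rank-one projector, i.e. $W_T^-$ has exactly two coincident negative eigenvalues and one positive eigenvalue, all constant.

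The next step is to translate this curvature constraint into information about the transverse geometry and use the explicit toric description of $Y^{p,q}$ to derive a contradiction. Since $Y^{p,q}$ is Sasaki--Einstein of rank $2$ in the irregular case, the transverse geometry is that of a (local) Kähler--Einstein orbifold metric of positive scalar curvature on a $4$-orbifold; a Kähler surface has $W^+$ determined algebraically by the scalar curvature (self-dual Weyl of a Kähler surface is $\mathrm{diag}(\tfrac{s}{6}, -\tfrac{s}{12}, -\tfrac{s}{12})$ up to conventions), so the relevant reducibility of $\wedge^- H^*$ would instead impose $W_T^+ = 0$ once we identify which of $\wedge^\pm$ is the "Kähler" factor — and careful bookkeeping of orientation conventions is needed here, since the transverse complex structure picks out one of $\wedge^\pm H^*$. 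If $\wedge^+ H^*$ is the bundle on which the transverse curvature is already constrained by the Kähler condition, then reducibility of $\wedge^- H^*$ is the genuinely new constraint, forcing $W_T^- = 0$ (the anti-self-dual transverse Weyl vanishes). Combined with $W_T^+$ being the fixed Kähler expression, $Y^{p,q}$ would then be transverse self-dual with constant scalar curvature. A transverse self-dual Kähler--Einstein $4$-orbifold of positive scalar curvature is locally symmetric, hence its transverse metric is (an orbifold quotient of) $\mathbb{CP}^2$ or $\mathbb{CP}^1 \times \mathbb{CP}^1$ with their symmetric metrics; but the leaf space of $Y^{p,q}$ is known explicitly (it is a specific non-symmetric Kähler--Einstein orbifold metric, essentially the first del Pezzo surface / a $\mathbb{CP}^1$-bundle over $\mathbb{CP}^1$), which is not one of these. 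This rules out all $(p,q)$ except possibly degenerate ones, and a direct check pins the exception to $(p,q) = (1,0)$, where $Y^{1,0}$ has transverse geometry $\mathbb{CP}^1 \times \mathbb{CP}^1$ (the round $S^2 \times S^2$), for which $\wedge^- H^*$ genuinely does reduce.

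An alternative, more computational route — which may in fact be cleaner given the toric setup of Section~$\ref{secypq}$ — is to compute the transverse curvature of $Y^{p,q}$ directly from the explicit metric of \cite{gmsw} and simply exhibit, for $(p,q) \neq (1,0)$, a point where $W_T^-$ (equivalently the curvature endomorphism of $\wedge^- H^*$) has full rank $3$, or at least rank $\ge 2$, which already prevents any reduction of the $SO(3)$-connection to a proper subgroup. Because the $Y^{p,q}$ metric depends on a local coordinate $y$ ranging over an interval with endpoints determined by the roots of a cubic, one expects $W_T^-$ to vary non-trivially with $y$, so that it cannot be a constant-rank-$1$ tensor unless the interval degenerates — and the interval degenerates precisely when $(p,q)=(1,0)$. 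The main obstacle, in either approach, is the orientation/convention bookkeeping: one must be scrupulous about which of $\wedge^\pm H^*$ is singled out by the transverse complex structure and which transverse Weyl component is constrained by the Kähler condition versus the hypothetical reduction, since getting this backwards would make the argument collapse or prove the wrong statement. I would resolve this at the outset by fixing the convention from Proposition~$\ref{propeinst}$ (the transverse Levi-Civita connection on $\wedge^- H^*$ is an ASD contact instanton for a transverse Einstein $M$) and checking consistency against the known flat case $Y^{1,0}$.
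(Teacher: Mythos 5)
Your ``alternative, more computational route'' is in fact the paper's proof. One takes the explicit local form of the transverse metric from \cite{gmsw}, valid precisely when $(p,q)\neq(1,0)$, computes the curvature operator $(R_T)^-_-$ on the orthonormal basis $e^{12}-e^{34}$, $e^{13}+e^{24}$, $e^{14}-e^{23}$ of $\wedge^- H^*$, and finds the eigenvalues $\frac{8-8\Delta}{\rho^2}-6$ and (with multiplicity two) $\frac{4\Delta-4}{\rho^2}+6$, where $\rho$ is the local coordinate and $\Delta$ the metric function of \cite{gmsw}. Since $\frac{4\Delta-4}{\rho^2}+6$ is not identically zero, $(R_T)^-_-$ has rank $\ge 2$ somewhere, contradicting the rank-$\le 1$ condition forced by reducibility. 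Your opening reduction --- reducibility implies the symmetric endomorphism $(R_T)^-_-=\frac{s_T}{12}+W_T^-$ has rank $\le 1$ everywhere, hence (its trace $\frac{s_T}{4}$ being a positive constant) equals $\frac{s_T}{4}$ times a rank-one projector --- is exactly the paper's starting point. But as written the proposal does not close the argument: ``one expects $W_T^-$ to vary non-trivially'' is precisely the content of the proposition and must be verified by an actual curvature computation, which is what the paper supplies.

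Your primary route contains a genuine error. From rank $\le 1$ and positive trace you correctly deduce that $W_T^-$ would have spectrum $\left(\frac{s_T}{6},-\frac{s_T}{12},-\frac{s_T}{12}\right)$; this is \emph{not} $W_T^-=0$, so the step ``forcing $W_T^-=0$, hence $Y^{p,q}$ would be transverse self-dual'' does not follow, and the locally symmetric classification you then invoke is applied to a hypothesis you have not established. What reducibility actually yields is a parallel line subbundle of $\wedge^- H^*$, i.e.\ a second, oppositely oriented parallel transverse complex structure --- an ambi-K\"ahler, transverse K\"ahler--Einstein metric with $s_T>0$. Classifying those is a substantially harder problem than the self-dual K\"ahler--Einstein case and cannot be cited off the shelf here. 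The direct computation is the way to finish the proof.
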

\begin{proof}
Let $R_T$ denote the curvature of $\overline{\nabla}$ and $(R_T)^-_-$ the $\wedge^- H^* \otimes \wedge^- H^*$-component of $R_T$. Viewing $(R_T)^-_-$ as a map $(R_T)^-_- \colon  \wedge^- H^* \to \wedge^- H^*$ it will suffice to show that $(R_T)^-_-$ does not have rank $\le 1$ everywhere. From \cite{gmsw} there is an open subset of $M$ and local coordinates on the leaf space for which the transverse metric $g_T$ has the form
\begin{equation*}
g_T = \frac{1}{\Delta} d\rho^2 + \frac{\rho^2}{4}( \tilde{\sigma}_1^2 + \tilde{\sigma}_2^2 + \Delta \tilde{\sigma}_3^2 ),
\end{equation*}
provided $(p,q) \neq (1,0)$. Here $\rho$ is a local coordinate, $\tilde{\sigma}_i$ are the left-invariant $1$-forms on $SU(2)$, $\Delta$ is given by
\begin{equation*}
\Delta = 1 + \frac{4(a-1)}{27} \frac{1}{\rho^4} - \rho^2
\end{equation*}
and $a \in (0,1)$ is a constant (see \cite{gmsw} for further details). Define an orthonormal coframe $e^1, \dots , e^4$ as follows:
\begin{equation*}
\begin{aligned}
e^1 &= \frac{\rho}{2}\tilde{\sigma}_1, & e^2 &= \frac{\rho}{2}\tilde{\sigma}_2, & e^3 &= \frac{\rho \sqrt{\Delta}}{2} \tilde{\sigma}_3, & e^4 &= \frac{d\rho}{\sqrt{\Delta}}.
\end{aligned}
\end{equation*}
By computing the curvature of the transverse metric one finds
\begin{equation*}
\begin{aligned}
(R_T)( e^{12} - e^{34}) &= \left( \frac{8-8\Delta}{\rho^2} - 6 \right)(e^{12} - e^{34}) \\
(R_T)( e^{13} + e^{24}) &= \left( \frac{4\Delta-4}{\rho^2} + 6 \right)(e^{13} + e^{24}) \\
(R_T)(e^{14} - e^{23}) &= \left( \frac{4\Delta-4}{\rho^2} + 6 \right)(e^{14} - e^{23}).
\end{aligned}
\end{equation*}
Then since $\frac{4\Delta-4}{\rho^2} + 6$ can not be identically zero we find that $(R_T)^-_-$ does not always have rank $\le 1$.
\end{proof}

Assume that $(p,q) \neq (1,0)$ and let $\mathcal{M}^*(\wedge^- H^*)$ denote the moduli space of irreducible ASD contact instantons on $\wedge^- H^*$. From Proposition \ref{propypqirred} we have that $\mathcal{M}^*(\wedge^- H^*)$ is a non-empty K\"ahler manifold. Let $\mathcal{M}^*(\wedge^- H^*)_{\overline{\nabla}}$ denote the connected component of $\mathcal{M}^*(\wedge^- H^*)$ containing $\overline{\nabla}$. 
\begin{proposition}
Let $(p,q)$ be such that $Y^{p,q}$ is irregular, that is $4p^2 - 3q^2$ is not a square. Then $\mathcal{M}^*(\wedge^- H^*)_{\overline{\nabla}}$ is a K\"ahler manifold of complex dimension $3$.
\end{proposition}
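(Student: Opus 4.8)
Since $Y^{p,q}$ is irregular, $4p^2-3q^2$ is not a perfect square, so in particular $(p,q)\neq(1,0)$, and Proposition~\ref{propypqirred} shows that the transverse Levi--Civita connection $\overline{\nabla}$ on $\mathfrak{g}_P=\wedge^-H^*$, which is an ASD contact instanton by Proposition~\ref{propeinst}, is irreducible. Being Sasaki--Einstein, $Y^{p,q}$ has positive transverse scalar curvature, so Proposition~\ref{propvanish} gives $H^2_B(\mathfrak{g}_P)=0$; together with $H^0_B(\mathfrak{g}_P)=0$ (irreducibility), Proposition~\ref{propunob} and the deformation theory of \S\ref{secmodulispace}, the moduli space $\mathcal{M}^*(\wedge^-H^*)$ is smooth near $\overline{\nabla}$, and K\"ahler by Proposition~\ref{propkahler}. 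By Proposition~\ref{prophodge}, using $H^0_B=H^2_B=0$,
\begin{equation*}
\dim_{\mathbb{C}}\mathcal{M}^*(\wedge^-H^*)_{\overline{\nabla}}=\tfrac{1}{2}\dim_{\mathbb{R}}H^1_B(\mathfrak{g}_P)=-ind(\overline{\partial}_{\overline{\nabla}}),
\end{equation*}
where $ind(\overline{\partial}_{\overline{\nabla}})=\sum_i(-1)^i\dim_{\mathbb{C}}\mathcal{H}^{0,i}_{\overline{\partial}_{\overline{\nabla}}}$ is the transverse Dolbeault index of the transverse holomorphic bundle $(\mathfrak{g}_P)_{\mathbb{C}}=\wedge^-H^*_{\mathbb{C}}$. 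Thus the proposition is equivalent to the identity $ind(\overline{\partial}_{\overline{\nabla}})=-3$.

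To compute this transverse index I would pass to an equivariant index. As $Y^{p,q}$ is a toric Sasaki $5$-manifold, $Isom(M,g)$ contains a torus $T^3$ preserving the Sasaki structure inside which the rank-$2$ Reeb torus $T=T^2$ is a subtorus. Since $\overline{\nabla}$ is the transverse Levi--Civita connection it is preserved by all of $T^3$, so the transverse Dolbeault complex of $\wedge^-H^*_{\mathbb{C}}$ is $T^3$-equivariant, and being transverse elliptic with respect to $T^2$ it is a fortiori transverse elliptic with respect to $T^3$ in the sense of \cite{at}. Its $T^3$-equivariant transverse index is a distribution $ind_{T^3}(\overline{\partial}_{\overline{\nabla}})=\sum_{\mu\in\widehat{T^3}}n_{\mu}\,[\mu]$, and an averaging argument identifies $ind(\overline{\partial}_{\overline{\nabla}})$ with its $T^2$-invariant part, $\sum_{\mu\in\Lambda}n_{\mu}$, where $\Lambda=\{\mu\in\widehat{T^3}:\mu|_{T^2}=1\}$ is the rank-$1$ lattice of characters trivial on $T^2$, i.e.\ the intersection of $\widehat{T^3}$ with the annihilator of the Reeb plane $\mathfrak{t}^2\subset\mathfrak{t}^3$. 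Finite dimensionality of $H^{0,*}_B(M,\wedge^-H^*_{\mathbb{C}})$ ensures that only finitely many $\mu\in\Lambda$ contribute. It is convenient to identify $\wedge^-H^*_{\mathbb{C}}\cong\mathrm{End}_0(\overline{T})\cong\mathrm{Sym}^2(\overline{T}^*)\otimes K^{-1}$ as transverse holomorphic bundles, where $\overline{T}=T^{1,0}H$ is the transverse holomorphic tangent bundle and $K=\wedge^2\overline{T}^*$.

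The $T^3$-equivariant transverse index is then evaluated by toric localization. The transverse holomorphic structure of $Y^{p,q}$ is that of its affine toric cone $C(Y^{p,q})$, encoded by a good rational polyhedral cone $\mathcal{C}\subset\mathbb{R}^3$ with four facets whose inward normals, and the Reeb vector field, are written down explicitly in terms of $p,q$ in \cite{gmsw}. The index then reduces to a finite sum of Atiyah--Bott/Lefschetz contributions over the corners $p_{\sigma}$ of $\mathcal{C}$,
\begin{equation*}
ind_{T^3}(\overline{\partial}_{E})=\sum_{\sigma}\frac{\mathrm{ch}_{T^3}\bigl(E_{p_{\sigma}}\bigr)}{(1-t^{-a^{\sigma}_1})(1-t^{-a^{\sigma}_2})},
\end{equation*}
each summand expanded as a convergent power series in the chamber attached to $\sigma$, where $a^{\sigma}_1,a^{\sigma}_2$ are the transverse tangent weights at $p_{\sigma}$ and, for $E=\mathrm{End}_0(\overline{T})$, $\mathrm{ch}_{T^3}(E_{p_{\sigma}})=1+t^{a^{\sigma}_1-a^{\sigma}_2}+t^{a^{\sigma}_2-a^{\sigma}_1}$. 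The part of this sum arising from the $1$ in each numerator reassembles into $ind_{T^3}(\overline{\partial}_{\mathcal{O}})$, whose $T^2$-invariant part equals $h^{0,0}_B-h^{0,1}_B+h^{0,2}_B=1$ by transverse Fano positivity and transverse Kodaira vanishing; the remaining contributions must therefore supply $-4$ to the $T^2$-invariant part. Performing this bookkeeping with the explicit cone data and extracting the coefficients of the characters on $\Lambda$ should yield $ind(\overline{\partial}_{\overline{\nabla}})=-3$, uniformly in $(p,q)$, hence $\dim_{\mathbb{C}}\mathcal{M}^*(\wedge^-H^*)_{\overline{\nabla}}=3$.

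The principal difficulty is this final localization computation: reading off the $T^3$-weights of $\overline{T}$, and hence of $\mathrm{End}_0(\overline{T})$, at each of the four corners of $\mathcal{C}$ from the data of \cite{gmsw}; assembling and re-expanding the four geometric series in the three torus variables; and --- the genuinely delicate point --- legitimately extracting the part invariant under the \emph{irrational} rank-$2$ subtorus $T^2$. Since $\xi$ is irregular this last step is not a fixed-point count, and is justified through Atiyah's transverse index theory \cite{at}, with the finite dimensionality of $H^{0,*}_B(M,\wedge^-H^*_{\mathbb{C}})$ guaranteeing that only finitely many weights along $\Lambda$ survive. One must also verify that the explicit $p,q$-dependence of the facet normals cancels in the final sum, leaving the uniform answer $3$.
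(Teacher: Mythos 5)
Your reduction of the statement to the transverse Dolbeault index computation is correct and matches the paper: irreducibility from Proposition \ref{propypqirred}, smoothness from the Sasaki--Einstein vanishing $H^2_B(\mathfrak{g}_P)=0$, the K\"ahler structure from Proposition \ref{propkahler}, and the identity $\dim_{\mathbb{C}}\mathcal{M}^*(\wedge^-H^*)_{\overline{\nabla}} = -ind(\overline{\partial}_{\overline{\nabla}})$ via Proposition \ref{prophodge}. Your strategy for the index --- pass to the $T^3$-equivariant transverse index, localize at the closed Reeb orbits (equivalently the corners of the moment cone), insert the character of $\wedge^-H^*_{\mathbb{C}}\cong\mathrm{End}_0(\overline{T})$ at each fixed orbit, and then extract the part invariant under the irrational Reeb $2$-torus --- is also exactly the route the paper takes, adapting the localization of Qiu--Zabzine.

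The genuine gap is that you never carry out the computation that the proposition actually rests on. You write that the localization ``should yield'' $ind(\overline{\partial}_{\overline{\nabla}})=-3$, and the one quantitative assertion you do make --- that the non-trivial-character contributions ``must therefore supply $-4$'' --- is obtained by subtracting the structure-sheaf contribution from the answer you are trying to prove, which is circular. The content of the proof is precisely the bookkeeping you defer: reading the $T^2$-weights of the normal bundle (hence of $\wedge^-H^*_{\mathbb{C}}$) at each of the four closed orbits from the $Y^{p,q}$ cone data, writing the four Lefschetz contributions as regularized geometric series with the correct choice of expansion ($[\cdot]^{\pm}$) at each orbit, summing them, and extracting the terms independent of the two Reeb-torus variables. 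In the paper this yields $-ind(\overline{\partial}_A)^{T^2_\xi}=t^{-1}+1+t$, and only at that point does one see that the $p,q$-dependence cancels and the answer is $3$; nothing in your argument establishes either the cancellation or the value. Until that sum is performed explicitly, the proposition is asserted rather than proved.
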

\begin{proof}
Our proof follows closely the index computation given in Appendix D of \cite{qz} and we will make use of their notation. Let $T^3$ be the $3$-torus acting on $Y^{p,q}$ and let $s,t,u$ denote coordinates on $T^3$. The vector fields generating $T^3$ which correspond to these coordinates are denoted $e_1,e_3,\alpha$ in \cite{qz}. Recall that $Y^{p,q}$ is of Reeb type, meaning $\xi$ is a vector field generated by the $T^3$-action. Moreover since $Y^{p,q}$ is irregular of rank $2$, the closure of the subgroup of $T^3$ generated by $\xi$ is a $2$-torus $T^2_\xi \subset T^3$. From \cite{qz} we find $T^2_\xi$ is the subgroup generated by $e_1,\alpha$. The coordinate $t$ identifies the quotient $T^3/T^2_\xi$ with a $1$-torus $T^1$.\\

Consider the transverse Dolbeault complex:
\begin{equation}\label{equypqdolb}
0 \to \Omega^{0,0}_H(M,\wedge^- H^*_{\mathbb{C}}) \buildrel \overline{\partial}_A \over \longrightarrow \Omega^{0,1}_H(M,\wedge^- H^*_{\mathbb{C}}) \buildrel \overline{\partial}_A \over \longrightarrow \Omega^{0,2}_H(M,\wedge^- H^*_{\mathbb{C}}) \to 0,
\end{equation}
where $H_\mathbb{C} = H \otimes \mathbb{C}$ and we have used that the adjoint bundle associated to $\wedge^- H^*$ is again $\wedge^- H^*$. The group $T^3$ lifts to an action on this complex making (\ref{equypqdolb}) a transverse elliptic complex. Let $ind(\overline{\partial}_A)$ denote the index of this complex, which as we recall is a distribution on $T^3$. Let $ind(\overline{\partial}_A)^{T^2_\xi}$ denote the $T^2_\xi$-invariant part of $ind(\overline{\partial}_A)$. Note that this is a distribution on $T^3$ which is invariant under $T^2_\xi$, so we may regard it as a distribution on the quotient $T^1$. In fact, by imposing invariance under $T^2_\xi$, we are passing to the basic Dolbeault complex, hence as representations of $T^1$ we have
\begin{equation*}
ind(\overline{\partial}_A)^{T^2_\xi} = H^0_{\overline{\partial}_A}(\wedge^- H^*_{\mathbb{C}}) - H^1_{\overline{\partial}_A}(\wedge^- H^*_{\mathbb{C}}) + H^2_{\overline{\partial}_A}(\wedge^- H^*_{\mathbb{C}}) = -H^1_{\overline{\partial}_A}(\wedge^- H^*_{\mathbb{C}}), 
\end{equation*}
where $H^k_{\overline{\partial}}(\wedge^- H^*_{\mathbb{C}})$ is the degree $k$ basic Dolbeault cohomology of $\wedge^- H^*_{\mathbb{C}}$. To get the complex dimension of $\mathcal{M}^*(\wedge^- H^*)_{\overline{\nabla}}$, we simply need to evaluate $-ind(\overline{\partial}_A)^{T^2_\xi}$ at $t=1$. Note that since $H^1_{\overline{\partial}_A}(\wedge^- H^*_{\mathbb{C}})$ is finite dimensional, this evaluation is well-defined.\\

The index of the transverse Dolbeault complex is computed in \cite{qz} for the case of a trivial bundle. We will simply adapt this calculation to the case of $\wedge^- H^*_{\mathbb{C}}$. The idea is to perturb the symbol complex along a vector field in the $T^3$-action so that the complex is an isomorphism away from the closed Reeb orbits. In this way the index computation localises to a sum of contributions over these orbits. Consider a closed orbit $\mathcal{O} \subset Y^{p,q}$ and note that $\mathcal{O}$ is an embedded circle. For such an orbit the torus $T^3$ can be decomposed into a product $T^3 = T^2 \times T^1$, where the $T^1$ acts as translation along the orbit and the $T^2$ subgroup acts on the normal bundle to $\mathcal{O}$. The $T^2$ subgroup also acts on the fibres of $\wedge^- H^*_{\mathbb{C}}|_{\mathcal{O}}$ according to some representation and we let $\chi( \wedge^- H^*_{\mathbb{C}}|_{\mathcal{O}})$ denote the character of this representation. On pulling back by the projection $T^3 \to T^2$ we will regard $\chi( \wedge^- H^*_{\mathbb{C}}|_{\mathcal{O}})$ as a character of $T^3$. \\

We will determine the characters $\chi( \wedge^- H^*_{\mathbb{C}}|_{\mathcal{O}})$. For this recall that the $Y^{p,q}$ spaces are given by a Delzant-type construction starting from a moment cone \cite{ms}. Using this construction it is possible to determine the weights of the $T^2$ subgroups on the normal bundle to each closed orbit $\mathcal{O}$. These weights can be read off Table (37) in \cite{qz}. Now since $\mathcal{O}$ is a Reeb orbit there is an isomorphism between the normal bundle to $\mathcal{O}$ and the restriction $H|_{\mathcal{O}}$, thus the weights of the action on the normal bundle determines the character $\chi( \wedge^- H^*_{\mathbb{C}}|_{\mathcal{O}})$. We let $\{ \mathcal{O}_{ij} \}_{0 \le i,j \le 1}$ denote the closed orbits. These correspond to the coordinate patches $U_{ij}$ in \cite{qz}. To adapt the index calculation in \cite{qz} to the case of $\wedge^- H^*_{\mathbb{C}}$ one simply has to insert the character $\chi( \wedge^- H^*_{\mathbb{C}}|_{\mathcal{O}_{ij}})$ into the index contribution over $\mathcal{O}_{ij}$. Thus the contributions to the index (using the notation of \cite{qz}) are as follows:
\begin{equation*}
\begin{aligned}
{\rm orbit \;} \mathcal{O}_{00} &: \; (1+st^{-1} + ts^{-1})\left[ \frac{1}{1-s^{-1}} \right]^+ \left[ \frac{1}{1-t^{-1}} \right]^+ \delta(1-u) \\
{\rm orbit \;} \mathcal{O}_{01} &: \; (1+st^3+s^{-1}t^{-3})\left[ \frac{1}{1-(st^2)^{-1}} \right]^+ \left[ \frac{1}{1-t} \right]^+ \delta(1-ut^{q-p}) \\
{\rm orbit \;} \mathcal{O}_{10} &: \; (1+st^{-1} + ts^{-1})\left[ \frac{1}{1-s^{-1}} \right]^- \left[ \frac{1}{1-t^{-1}} \right]^+ \delta(1-us^p) \\
{\rm orbit \;} \mathcal{O}_{11} &: \; (1+st^3+s^{-1}t^{-3})\left[ \frac{1}{1-(st^2)^{-1}} \right]^- \left[ \frac{1}{1-t} \right]^+ \delta(1-us^pt^{p+q}).
\end{aligned}
\end{equation*}
The index is the sum of these four contributions. Next to take the $T^2_\xi$-invariant part $ind(\overline{\partial}_A)^{T^2_\xi}$ of the index, one needs to extract the terms which are independent of $s$ and $u$, giving
\begin{equation*}
-ind(\overline{\partial}_A)^{T^2_\xi} = t^{-1} + 1 + t.
\end{equation*}
Finally to get the dimension of the moduli space, we set $t=1$ giving $dim(H^1_{\overline{\partial}_A}(\wedge^- H^*_{\mathbb{C}})) \linebreak = 3$.
\end{proof}

%%%%%%%%%%%%%%%%%%%%%%%%%%%%%%%%%%%%%%%%%%%%%%%%%%%%%%%%%%%%%%%%%%%%%%%%%%%%%%%%
%%%%%%%%%%%%%%%%%%%%%%%%%%%%%%%%%%%%%%%%%%%%%%%%%%%%%%%%%%%%%%%%%%%%%%%%%%%%%%%%
%%%%%%%%%%%%%%%%%%%%%%%%%%%%%%%%%%%%%%%%%%%%%%%%%%%%%%%%%%%%%%%%%%%%%%%%%%%%%%%%
%%%%%%%%%%%%%%%%%%%%%%%%%%%%%%%%%%%%%%%%%%%%%%%%%%%%%%%%%%%%%%%%%%%%%%%%%%%%%%%%

\bibliographystyle{amsplain}

\end{document}